\documentclass[a4paper, 12pt]{amsart}
\usepackage[margin=0.8in]{geometry}
\usepackage[utf8]{inputenc}
\usepackage[T1]{fontenc}
\usepackage{multirow}
\usepackage{booktabs}
\usepackage{comment}
\usepackage{amssymb, amsmath}
\usepackage{amsfonts}
\usepackage{enumitem}
\usepackage{xcolor}
\usepackage{float}
\usepackage{mathtools}
\usepackage{tikz}
\usepackage{caption}
\usepackage{array}
\usepackage{colortbl}
\usepackage{subcaption}
\usepackage{pgfplotstable}
\usepackage{pgfplots}
\graphicspath{{figures/}}
\pgfplotsset{compat=newest}
\usetikzlibrary{arrows}
\usetikzlibrary{decorations.text}
\usetikzlibrary{decorations.markings}
\pgfplotsset{compat=1.17}
\usetikzlibrary{arrows.meta, calc}
\usepackage[protrusion=true,expansion=false]{microtype}
\usepackage[hidelinks]{hyperref}

\newcommand{\abs}[1]{\lvert #1 \rvert}

\renewcommand{\Re}{\operatorname{Re}}
\renewcommand{\Im}{\operatorname{Im}}
\newcommand{\tr}{\operatorname{tr}}
\renewcommand{\det}{\operatorname{det}}
\newcommand{\zetaR}{\zeta_{\scriptscriptstyle R}}
\newcommand{\xiR}{\xi_{\scriptscriptstyle R}}
\newtheorem{theorem}{Theorem}[section]
\newtheorem{proposition}[theorem]{Proposition}
\newtheorem{lemma}[theorem]{Lemma}

\newtheorem{definition}[theorem]{Definition}
\newtheorem{assumption}[theorem]{Assumption}

\newtheorem{remark}[theorem]{Remark}

\definecolor{stablegreen}{RGB}{30, 140, 60}
\definecolor{unstablered}{RGB}{200, 40, 40}
\definecolor{saddleorange}{RGB}{210, 120, 0}
\definecolor{regionblue}{RGB}{220, 235, 255}
\definecolor{regionred}{RGB}{255, 225, 220}
\definecolor{twblue}{RGB}{20, 80, 180}
\definecolor{swred}{RGB}{180, 20, 20}

\title[Wave Selection at an $O(2)$-Hopf Bifurcation]{Wave Selection at an $O(2)$-Hopf Bifurcation in Conservative Two-Component PDE Systems}

\author[Özer]{Saadet S. Özer$^1$}
\address[Özer$^1$]{Department of Mathematics, Istanbul Technical University, 34467 Istanbul, Turkey}
\email{ozersaa@itu.edu.tr}

\author[Şengül]{Taylan Şengül$^2$}
\address[Şengül$^2$]{Department of Mathematics, Marmara University, 34722 Istanbul, Turkey}
\email{taylan.sengul@marmara.edu.tr}

\author[Tiryakioglu]{Burhan Tiryakioglu$^3$}
\address[Tiryakioglu$^3$]{Department of Mathematics, Marmara University, 34722 Istanbul, Turkey}
\email{burhan.tiryakioglu@marmara.edu.tr}
\date{\today}

\begin{document}

\begin{abstract}
At an $O(2)$-equivariant Hopf bifurcation a spatially periodic system selects between traveling waves and standing waves, and which branch appears and whether it is stable is decided by two cubic normal form coefficients.
Obtaining these coefficients for a given PDE has required a derivation carried out afresh for each model.
We remove that step for two-component systems with conservative polynomial differential nonlinearities on a one-dimensional periodic domain, deriving closed form formulas for both coefficients.
They are expressed directly in terms of the array of nonlinear PDE coefficients and the spectral data of the linearization, its critical eigenvectors, adjoint eigenvectors, and non-critical resolvents.
We give verifiable conditions under which the underlying center manifold reduction is valid: a structural condition on the principal part of the linearization that yields the required resolvent estimate, and a condition on the Fourier symbol that we show is equivalent to the required spectral gap.
In contrast to the model specific computations available previously, the resulting formulas apply to any system in the class without further derivation.
A companion implementation evaluates the two normal form coefficients from the coefficient array and verifies the assumptions for a given system.
We specialize the formulas to two applications: a strain-gradient regularization of the nonlinear $p$-system, whose quadratic case contains a previously studied model as a special case, and a first order conservative bilinear family, not previously analyzed, in which every bifurcation scenario allowed by the general classification is realized.
\end{abstract}
\keywords{$O(2)$-equivariant systems, Hopf bifurcation, traveling and standing waves, normal form, center manifold reduction, dynamic transition theory}
\subjclass[2020]{37G40, 35B32, 37L10, 35B36, 35Q74}
\maketitle
\setcounter{tocdepth}{3}
\tableofcontents

\section{Introduction}
Hopf bifurcation in $O(2)$-equivariant PDEs is a standard local mechanism by which spatially periodic systems generate traveling and standing waves.
The group $O(2)$, generated by translations and a reflection, is the natural symmetry group for systems on a one-dimensional periodic domain, and arises in nonlinear wave systems, pattern-forming fluid models, and optical systems \cite{yao20142,monteiro2014transverse,crawford1991symmetry,dangelmayr1987takens,budzinskiy2017rotating,budzinskiy2017normal}.
The equivariant Hopf normal form is classical: near a critical conjugate pair of Fourier modes $e^{\pm i m_c x}$ the reduced dynamics is four dimensional and supports traveling waves, which break the reflection symmetry, and standing waves, which preserve it \cite{crawford1991symmetry,golubitsky2012singularities,haragus2010local,vangils1986hopf}.
Their stability and selection are governed by two complex cubic coefficients, denoted here by $\zeta$ and $\xi$.
Confronted with a particular system, one wants to know which of the two wave types it selects at onset and whether that branch is stable.
The normal form answers this, but only once $\zeta$ and $\xi$ are known as functions of the data defining the system.

For applications the difficulty is not the abstract form of the normal form but the computation of these coefficients from the original PDE data.
General center manifold and normal form theory expresses them only implicitly, through multilinear maps, resolvents, and the center manifold corrections obtained from homological equations \cite{marsden1976hopf,haragus2010local,guo2022equivariant,jiang2020formulation,villarsepulveda2024amplitude}.
In concrete PDEs the passage from these implicit expressions to usable coefficients has been carried out case by case \cite{yao20142,yaoliu2019}.

We consider two-component systems on the torus of the form
\[
\frac{d}{dt}\begin{bmatrix}u\\v\end{bmatrix}
=
L_\lambda \begin{bmatrix}u\\v\end{bmatrix}
+
G\begin{bmatrix}u\\v\end{bmatrix},
\]
where $L_\lambda$ is an $O(2)$-equivariant differential operator and $G$ is an $O(2)$-equivariant conservative polynomial differential nonlinearity encoded by a multi-indexed coefficient array $\{\mathbf a_{\alpha_1,\alpha_2}\}$.
For this class we give closed form expressions for $\zeta$ and $\xi$ in terms of the array and the eigenvectors, adjoint eigenvectors, and non-critical resolvents of the Fourier symbols of $L_\lambda$, so that no system specific normal form derivation is required once a PDE is placed in the class.
Beyond the formulas, the explicit reduction reveals two structural features that the implicit theory hides.
The quadratic center-manifold correction is computed by inverting a single matrix, so it needs only that this matrix be invertible, not diagonalizable, and the reduction stays valid even where a standard eigenvector calculation would break down.
A reflection symmetry then removes the even-order self-interactions of the critical modes and selects which odd-order terms survive, which is what makes the cubic coefficients explicit.
The strategy of reading bifurcation behavior off explicit formulas in the linear spectrum and the nonlinear coefficient array has been effective for related problems \cite{csengul2024first,csengul2025effect}; here it is carried out for the $O(2)$-Hopf case.
The formulas and the verification of the hypotheses are implemented in a companion package, so that for any system in the class one obtains the coefficients together with a check of the conditions without hand computation.

The reduction itself is not automatic for this operator class, and we make its validity explicit.
A structural condition on the principal part of the linearization yields the resolvent estimate required for the center manifold construction, and a spectral gap condition separates the critical Fourier modes from the rest of the spectrum, as is required for center manifold reduction \cite{henry1981geometric,chow1982methods}.
We characterize this gap, show that it is necessary as well as sufficient, and exhibit a polynomial operator for which it fails.

The PDE class we consider includes hyperbolic conservation laws with strain-gradient regularization and Kuramoto--Sivashinsky-type dynamics \cite{slemrod1983admissibility,lefloch2002hyperbolic,engel2013low,palchoudhury2015self,kuramoto2003chemical,sivashinsky1980flame}.
Because the dynamics is restricted to a fixed-mean periodic phase space, the bifurcation studied here is distinct from the conserved-Hopf instability in which a large-scale conserved mode remains coupled to the oscillatory amplitude \cite{matthews2000pattern,greve2024amplitude,tateyama2026higher}.
As a first application we treat a strain-gradient regularization of a one-dimensional conservation law whose stress--strain law carries both quadratic and cubic terms.
Its quadratic case contains a previously studied model as a special case of a substantially richer family \cite{yao20142,liyao2015}.
The closest comparison is Li and Yao \cite{liyao2015}, who treat the same regularized system for general stress laws and arbitrary critical wavenumbers and obtain explicit coefficients for that scalar family; our formulas reproduce theirs on the overlap and are derived instead from the nonlinear coefficient array and the Fourier resolvents, so they apply to the whole two-component class rather than to a stress--strain model.
We then enlarge the nonlinearity within a conservative bilinear family and exhibit system data realizing every bifurcation scenario permitted by the general classification.
No scenario allowed by the classification is therefore vacuous on this class, and the formulas are sharp in the sense that each of their sign configurations is attained by an actual system.

The paper is organized as follows.
Section~\ref{sec:main} formulates the PDE class and the spectral hypotheses, and Section~\ref{sec:results} states the center manifold reduction, the $O(2)$-Hopf normal form, and the classification of bifurcating traveling and standing waves.
Section~\ref{sec:formulas} gives the explicit formulas for $\zeta$ and $\xi$, and Section~\ref{sec:app1} applies them to two conservative models, including the bilinear family of Subsection~\ref{sec:app1_TW} that realizes the full classification.
The proofs are given in Section~\ref{sec:proof}, with auxiliary spectral and stability calculations collected in the appendices.
 
\section{Problem Setup and Assumptions}\label{sec:main}
We consider the following nonlinear system of PDEs
\begin{equation}\label{main}
	\frac{d}{dt} \begin{bmatrix} u \\ v \end{bmatrix} = L_\lambda \begin{bmatrix} u \\ v \end{bmatrix} + G\left( \begin{bmatrix} u \\ v \end{bmatrix} \right),
\end{equation}
where $\{L_\lambda\}_{\lambda\in\mathbb{R}}$ is a family of linear operators ($\lambda$ will be occasionally suppressed throughout) depending on a real control parameter $\lambda$, and $G$ is a $\lambda$ independent nonlinear operator.
The spatial domain is the torus $\mathbb T = \mathbb R / 2\pi\mathbb Z$, and we work on the mean-zero phase space $\mathcal{H} := \dot H^0_{\mathrm{per}}(0,2\pi)^2$, where $\dot H^s_{\mathrm{per}}(0,2\pi)$ is the zero-mean periodic Sobolev space and $\mathcal{H}^s := \dot H^s_{\mathrm{per}}(0,2\pi)^2$; that is,
\[
\int_{0}^{2\pi} \begin{bmatrix} u \\ v \end{bmatrix} dx = 0.
\]
We use the $L^2$ inner product on $\mathcal{H}$ and the standard Hermitian inner product on $\mathbb{C}^2$,
\[
\langle (u_1,v_1), (u_2,v_2) \rangle_{L^2} = \int_0^{2\pi} \bigl( u_1\overline{u_2} + v_1\overline{v_2} \bigr)\,dx,
\qquad
\langle \mathbf{q}, \mathbf{p} \rangle_{\mathbb{C}^2} = q_1\overline{p_1} + q_2\overline{p_2}.
\]
The translation and reflection operators
\[
T_h \begin{bmatrix} u(x) \\ v(x) \end{bmatrix} = \begin{bmatrix} u(x+h) \\ v(x+h) \end{bmatrix}, \qquad
R \begin{bmatrix} u(x) \\ v(x) \end{bmatrix} = \begin{bmatrix} u(-x) \\ -v(-x) \end{bmatrix},
\]
generate an $O(2)$ action (i.e.\ $R^2 = \mathrm{id}$ and $T_h R = R T_{-h}$).
\begin{definition}\label{O2equivariance}
The system \eqref{main} is \emph{$O(2)$-equivariant} if $(L_\lambda + G)$ commutes with both $T_h$ and $R$, for every $\lambda\in\mathbb{R}$ and all $h\in\mathbb{T}$.
\end{definition}

$O(2)$-equivariance forces $L_\lambda = (L_{ij})_{i,j=1,2}$ to have the following block structure
\begin{equation}\label{Lij}
	\begin{aligned}
	& L_{11} = \sum_{k=0}^{m_L} b_{1,2k} \partial_{x}^{2k}, \qquad
	&& L_{12} = \sum_{k=0}^{m_L} b_{1,2k+1} \partial_{x}^{2k+1}, \\
	& L_{21} = \sum_{k=0}^{m_L} b_{2,2k+1} \partial_{x}^{2k+1}, \qquad
	&& L_{22} = \sum_{k=0}^{m_L} b_{2, 2k} \partial_{x}^{2k}
	\end{aligned}
\end{equation}
with $m_L\ge 1$ and $b_{i,k}=b_{i,k}(\lambda)\in\mathbb{R}$ smooth in $\lambda$.

\begin{assumption}\label{cond:D}
The principal part of $L$ is diagonal and nondegenerate:
\begin{equation}\label{eq:condD}
b_{1,2m_L}\ne0,\qquad b_{2,2m_L}\ne0,\qquad b_{1,2m_L+1}=b_{2,2m_L+1}=0.
\end{equation}
\end{assumption}
\noindent

Each component of the nonlinear operator $G=(g_1,g_2)^T$ is of the form
\begin{equation}\label{G}
	g_i(u, v) = \sum_{(\alpha_1,\alpha_2)} a_{\alpha_1, \alpha_2}^i \, (D u)^{\alpha_1} (D v)^{\alpha_2}, \qquad i = 1,2,
\end{equation}
where $\alpha_1,\alpha_2$ belong to the multi-index set
\[
\Lambda = \left\{ \alpha = (\alpha_j)_{j\ge0} \in \mathbb{Z}_{\ge 0}^{\infty} : \abs{\alpha} = \sum_{j\ge0} \alpha_j < \infty \right\},
\]
and $(D u)^{\alpha} = \prod_{j\ge0} (\partial_x^{j} u)^{\alpha_j}$.

The sum in \eqref{G} contains only pairs with $|\alpha_1|+|\alpha_2|\ge 2$.
We write $\mathbf{a}_{\alpha_1,\alpha_2} = (a^1_{\alpha_1,\alpha_2},\, a^2_{\alpha_1,\alpha_2})^T$ for the coefficient vector.

The derivative loss of $G$,
\begin{equation}\label{rG}
r_G := \max\bigl\{
j : \alpha_{k,j} > 0 \text{ and } a^i_{\alpha_1,\alpha_2} \neq 0 \text{ for some } i,k \in \{1,2\} \bigr\},
\end{equation}
is the highest derivative order appearing in $G$; the maximum is finite by Assumption~\ref{ass:S}\ref{S1} below, and we set $r_G=0$ when $G\equiv0$.

\begin{definition}\label{defm0m1}
	For $\alpha \in \Lambda \subset \left( \mathbb{Z}_{\ge 0} \right)^{\infty}$, we define the following non-negative integers
	\begin{equation}\label{m0123}
	\mathfrak{m}_e(\alpha) = \sum_{i\ge0} \alpha_{2i}, \quad 
	\mathfrak{m}_o(\alpha) = \sum_{i\ge0} \alpha_{2i+1}, \quad
	\mathfrak{m}_t(\alpha) = \sum_{i\ge 0} i\alpha_i
	\end{equation}
	which count even order, odd order, and total order of derivatives in $(Du)^\alpha$ or $(Dv)^\alpha$, respectively. Moreover,
	\[
	\mathfrak m_e(\alpha) + \mathfrak m_o(\alpha) = \abs{\alpha}.
	\]
\end{definition}
For example, $\alpha=(2,0,1,0,\dots)$ gives $(Du)^\alpha = u^2 u_{xx}$ with $\mathfrak{m}_e(\alpha)=3$, $\mathfrak{m}_o(\alpha)=0$, $\mathfrak{m}_t(\alpha)=2$.

\begin{assumption}\phantomsection\label{ass:S}
\begin{enumerate}[label=(\roman*)]
	\item \label{S1} $G$ is a differential polynomial, i.e., the sums \eqref{G} are over a finite set of multi indices.
	\item\label{S2} 
	\begin{equation}\label{syr_G}
		\begin{aligned}
		& a^1_{\alpha_1, \alpha_2} = 0 \quad \text{if } \mathfrak{m}_o(\alpha_1) + \mathfrak{m}_e(\alpha_2) \equiv 1 \pmod{2}, \\
		& a^2_{\alpha_1, \alpha_2} = 0 \quad \text{if } \mathfrak{m}_o(\alpha_1) + \mathfrak{m}_e(\alpha_2) \equiv 0 \pmod{2}.
	\end{aligned}
\end{equation}
\item \label{S3} $G$ is conservative, i.e., $g_i = \partial_x f_i$, for some polynomial $f_i$, $i=1,2$.
\item\label{S4} The order condition
\begin{equation}\label{rGsL}
r_G \le 2m_L - 1.
\end{equation}
is satisfied.
\end{enumerate}
\end{assumption}
Condition~\ref{S1} is stronger than the cubic coefficients require; see Section~\ref{sec:outlook}.

\begin{remark}\label{rem:euler}
Condition~\ref{S3} is algorithmically checkable: a differential polynomial $g$ with $g(0,0)=0$ is a total $x$-derivative if and only if the Euler--Lagrange operators
\[
E_{w}(g) = \sum_{r\ge 0}(-\partial_x)^r \frac{\partial g}{\partial (\partial^r_x w)}
\]
vanish for $w=u$ and $w=v$ \cite[Theorem~4.7]{olver1993applications}.
In particular, $G$ satisfies~\ref{S3} if and only if $E_u(g_i)=E_v(g_i)=0$ for $i=1,2$.
\end{remark}

\begin{lemma}\label{lem:syr_G}
The operator $G$ is $O(2)$-equivariant if and only if \eqref{syr_G} holds.
\end{lemma}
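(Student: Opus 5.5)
Translation equivariance is automatic: each $g_i$ is a polynomial in $u,v$ and their $x$-derivatives with constant coefficients, and $\partial_x^j(u(x+h))=(\partial_x^j u)(x+h)$. Hence $G(T_h U)=T_h G(U)$ for every $h$ and every choice of coefficients, and the whole content of the lemma is reflection equivariance $G(RU)=RG(U)$. We first compute how $R$ acts on a single monomial, and then compare coefficients, using linear independence of distinct differential monomials to obtain the "only if" direction.

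Write $\tilde u(x)=u(-x)$ and $\tilde v(x)=-v(-x)$, so that $RU=(\tilde u,\tilde v)$. The chain rule gives
\[
\partial_x^j\tilde u(x)=(-1)^j(\partial_x^j u)(-x),\qquad \partial_x^j\tilde v(x)=(-1)^{j+1}(\partial_x^j v)(-x).
\]
Multiplying over the factors of a monomial, the sign $(-1)^j$ is nontrivial exactly for odd $j$, which accounts for $(-1)^{\mathfrak m_o}$. The extra $(-1)$ attached to each $v$-factor turns $(-1)^{\mathfrak m_o(\alpha_2)}$ into $(-1)^{\mathfrak m_o(\alpha_2)+|\alpha_2|}=(-1)^{\mathfrak m_e(\alpha_2)}$, since $\mathfrak m_e+\mathfrak m_o=|\alpha|$. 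Therefore
\[
(D\tilde u)^{\alpha_1}(D\tilde v)^{\alpha_2}(x)=(-1)^{\mathfrak m_o(\alpha_1)+\mathfrak m_e(\alpha_2)}\,\bigl[(Du)^{\alpha_1}(Dv)^{\alpha_2}\bigr](-x).
\]
On the other side, $(RG(U))(x)=(g_1(U)(-x),\,-g_2(U)(-x))$. Comparing term by term, the condition $g_1(RU)(x)=g_1(U)(-x)$ is implied by $a^1_{\alpha_1,\alpha_2}\bigl((-1)^{\mathfrak m_o(\alpha_1)+\mathfrak m_e(\alpha_2)}-1\bigr)=0$. Likewise, $g_2(RU)(x)=-g_2(U)(-x)$ is implied by $a^2_{\alpha_1,\alpha_2}\bigl((-1)^{\mathfrak m_o(\alpha_1)+\mathfrak m_e(\alpha_2)}+1\bigr)=0$. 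These two conditions are exactly \eqref{syr_G}, which proves the "if" direction.

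For the converse, assume $G(RU)=RG(U)$ for all smooth $U$. Then the differential polynomial
\[
\sum_{(\alpha_1,\alpha_2)} a^1_{\alpha_1,\alpha_2}\bigl((-1)^{\mathfrak m_o(\alpha_1)+\mathfrak m_e(\alpha_2)}-1\bigr)(Du)^{\alpha_1}(Dv)^{\alpha_2}
\]
vanishes identically as a function of $x$ (after substituting $x\mapsto -x$), and the analogous statement holds for $g_2$. This is the main point requiring care: we need that a finite sum $\sum c_{\alpha_1,\alpha_2}(Du)^{\alpha_1}(Dv)^{\alpha_2}$ vanishing for all admissible $(u,v)$ forces every $c_{\alpha_1,\alpha_2}=0$. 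We argue via jets. Fix $x_0$ and $N$ larger than the maximal derivative order present, which is finite by Assumption~\ref{ass:S}\ref{S1}. Any prescribed values of $(\partial_x^j u(x_0),\partial_x^j v(x_0))_{j\le N}$ can be realized by smooth $2\pi$-periodic mean-zero functions: take a Taylor polynomial times a bump function supported near $x_0$, then subtract a smooth correction supported away from $x_0$ to restore zero mean. The sum then defines the zero polynomial on $\mathbb R^{2(N+1)}$, so all of its coefficients vanish. This yields \eqref{syr_G} and completes the proof.
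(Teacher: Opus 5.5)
Your proof is correct and follows the paper's route. Translation equivariance is automatic, each monomial acquires the sign $(-1)^{\mathfrak m_o(\alpha_1)+\mathfrak m_e(\alpha_2)}$ under $R$, and comparing with $RG(U)=(g_1(U)(-x),\,-g_2(U)(-x))$ monomial by monomial yields \eqref{syr_G}. Your one addition is the jet-realization argument for the linear independence of distinct differential monomials, which makes rigorous the ``comparing coefficients'' step that the paper takes for granted in the ``only if'' direction.
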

\noindent The proof is given in Appendix~\ref{app:spectral}.

We regard $L$ as an unbounded operator on $\mathcal{H}$ with domain $D(L) := \mathcal{H}^{2m_L}$.
Since $H^s_{\mathrm{per}}(0,2\pi)$ is a Banach algebra for $s>\tfrac12$, the map $G\colon\mathcal{H}^{2m_L}\to\mathcal{H}$ is well-defined and smooth by~\eqref{rGsL}, see Section~\ref{sec:CM_existence} for details.

\begin{theorem}\label{eigenvalue_theorem}
Suppose Assumption~\ref{cond:D} holds.
The spectrum of $L$ on $\mathcal{H}$ consists of the eigenvalues 
$$\{\beta_{m,n} : m\in\mathbb{Z}_{\ne0},\, n\in\{1,2\}\},$$
which solve the characteristic equation
\begin{equation}\label{char}
	\beta^2 - \tr M_m\, \beta + \det M_m = 0,
\end{equation}
where
\begin{equation}\label{Mm}
	M_m =
	\sum_{k=0}^{m_L}(-1)^k
	\begin{bmatrix*}[l]
	b_{1,2k} m^{2k} & i b_{1,2k+1} m^{2k+1}\medskip \\
	i b_{2,2k+1} m^{2k+1} & b_{2,2k} m^{2k}
	\end{bmatrix*},
\end{equation}
is the Fourier symbol of $L$ ($\partial_x\mapsto im$).
For any root $\beta_{m,n}$, every nonzero $\mathbf q_{m,n}\in\ker(M_m-\beta_{m,n}I_2)$ gives an eigenfunction $e_{m,n}(x)=e^{imx}\mathbf q_{m,n}$; when a repeated root is defective, there is only one eigenvector on that block.
Moreover, $\beta_{m,n} = \overline{\beta_{-m,n}}$; in particular, either
\[
\beta_{m,1} = \beta_{-m,1},\quad \beta_{m,2} = \beta_{-m,2} \in \mathbb{R},
\]
or
\[
\beta_{m,1} = \overline{\beta_{m,2}} = \overline{\beta_{-m,1}} = \beta_{-m,2} \notin \mathbb{R}.
\]
\end{theorem}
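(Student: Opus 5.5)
The plan is to diagonalize $L$ by the Fourier transform and reduce everything to the $2\times2$ symbols $M_m$.

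\textbf{Step 1: the Fourier block structure.} Each $w\in\mathcal H$ expands as $w=\sum_{m\ne0}e^{imx}\hat w_m$ with $\hat w_m\in\mathbb C^2$; the mean-zero condition removes $m=0$. Substituting $\partial_x\mapsto im$ into \eqref{Lij} and using $(im)^{2k}=(-1)^km^{2k}$, $(im)^{2k+1}=i(-1)^km^{2k+1}$ gives $\widehat{(Lw)}_m=M_m\hat w_m$ with $M_m$ as in \eqref{Mm}. Hence $L$ is unitarily equivalent, up to the factor $2\pi$, to the block-diagonal operator $\bigoplus_{m\ne0}M_m$ on $\ell^2(\mathbb Z_{\ne0};\mathbb C^2)$, with domain given by the weight $|m|^{2m_L}$. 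Consequently every root $\beta$ of \eqref{char} with eigenvector $\mathbf q\in\ker(M_m-\beta I)$ yields an eigenfunction $e^{imx}\mathbf q$. On a block where the repeated root is defective the kernel is one-dimensional, which gives the remark about defective roots.

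\textbf{Step 2: no other spectrum.} This is the main obstacle. We must show that for $\beta\notin\{\beta_{m,n}\}$ the inverse $(M_m-\beta)^{-1}$ is bounded uniformly in $m$, and that the eigenvalue set is closed, so that the spectrum is exactly the eigenvalues.

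Here Assumption~\ref{cond:D} enters. Write $M_m=\operatorname{diag}(d_1(m),d_2(m))+N_m$, where $d_j(m)=(-1)^{m_L}b_{j,2m_L}m^{2m_L}+O(m^{2m_L-2})$. Because $b_{j,2m_L+1}=0$, the off-diagonal entries are $O(|m|^{2m_L-1})$. Then
\[
\det(M_m-\beta)=(d_1-\beta)(d_2-\beta)-(\text{off-diagonal product}),
\]
where the product is $O(m^{4m_L-2})$. For fixed $\beta$ this gives $|\det(M_m-\beta)|\gtrsim m^{4m_L}$ for $|m|$ large. By the adjugate formula, $\|(M_m-\beta)^{-1}\|\lesssim m^{2m_L}/m^{4m_L}\to0$.

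The same estimate shows $|\beta_{m,n}|\to\infty$: the roots satisfy $\beta_{m,n}=d_n(m)+o(m^{2m_L})$ by a perturbation of the quadratic formula. Therefore the eigenvalues have no finite accumulation point, and for $\beta$ off the eigenvalue set, $\sup_m\|(M_m-\beta)^{-1}\|<\infty$. This also shows the resolvent is compact, mapping into $\mathcal H^{2m_L}$, so the spectrum is discrete and consists only of eigenvalues.

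\textbf{Step 3: symmetry of the eigenvalues.} Since the $b_{i,k}$ are real, $\overline{M_m}=M_{-m}$: the even powers of $m$ are unchanged, while $i\,m^{2k+1}$ conjugates to $-i\,m^{2k+1}=i(-m)^{2k+1}$. Hence the characteristic polynomial of $M_{-m}$ is the conjugate of that of $M_m$, and $\beta_{m,n}=\overline{\beta_{-m,n}}$ after labelling.

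Moreover $\tr M_m$ is real. The determinant is
\[
\det M_m=(\text{real})\cdot(\text{real})-\bigl(i\,(\cdot)\bigr)\bigl(i\,(\cdot)\bigr)=\text{real}+(\text{real product}),
\]
so it is also real. Thus \eqref{char} has real coefficients, and its roots are either both real or a conjugate pair. Both coefficients are even in $m$, because $m^{2k+1}m^{2j+1}$ is even, so $M_m$ and $M_{-m}$ have the same characteristic polynomial.

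In the real case we label so that $\beta_{m,n}=\beta_{-m,n}$. In the complex case we label $\beta_{m,1}=\overline{\beta_{m,2}}$ and set $\beta_{-m,1}=\overline{\beta_{m,1}}=\beta_{m,2}$, which gives the stated chain of equalities.
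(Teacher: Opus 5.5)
Your proposal is correct and follows essentially the paper's route. You use the Fourier block decomposition, then the adjugate bound $\|(\beta-M_m)^{-1}\|=O(m^{-2m_L})$ from $\det(\beta I_2-M_m)\sim b_{1,2m_L}b_{2,2m_L}m^{4m_L}$ (off-diagonal product of degree at most $4m_L-2$ under Assumption~\ref{cond:D}), and finally that $\tr M_m$ and $\det M_m$ are real and even in $m$. The differences are cosmetic: the paper gets $|\beta_{m,n}|\to\infty$ from the product and sum of the roots rather than by perturbing the quadratic formula, and it cites Kato's compact-resolvent theorem where you conclude directly from the uniformly bounded blockwise inverse.
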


\noindent The proof is given in Appendix~\ref{app:spectral}.

\begin{lemma}\label{lem:adjoint_eigen}
For each $m\ne0$ with $\beta_{m,1}\ne\beta_{m,2}$, the eigenvectors on that block may be normalized so that the following statements hold.
The adjoint eigenvectors $e_{m,n}^* = e^{imx}\mathbf{q}_{m,n}^*$ satisfy
\begin{equation}\label{Mm_adjoint_eigen}
M_{-m}^T\,\mathbf{q}_{m,n}^* = \beta_{-m,n}\,\mathbf{q}_{m,n}^*,
\end{equation}
and the orthogonality relations
\begin{equation}\label{orth}
\langle e_{m,n},\,e_{m',n'}^*\rangle_{L^2} = \delta_{mm'}\delta_{nn'}, \qquad
\langle\mathbf{q}_{m,n},\mathbf{q}_{m,n'}^*\rangle_{\mathbb{C}^2} = \frac{\delta_{nn'}}{2\pi},
\end{equation}
hold for all such $m,m'$ and for $n,n'\in\{1,2\}$.
In particular, these relations hold on the critical blocks under Assumption~\ref{hopf_assumption}.
\end{lemma}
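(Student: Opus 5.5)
The plan is to compute the $L^2$-adjoint of $L$ on each Fourier block, identify its eigenvectors with the left eigenvectors of $M_m$, and then normalize. Integrating by parts on $\mathbb T$ moves each $\partial_x^j$ to $(-\partial_x)^j$. The coefficients $b_{i,k}$ are real, so $L^*$ acts on $e^{imx}\mathbf p$ as $e^{imx}M_m^*\mathbf p$, where $M_m^*=\overline{M_m}^{\,T}$ is the conjugate transpose. From \eqref{Mm} one reads off $\overline{M_m}=M_{-m}$, because conjugation flips the sign of the factors $i m^{2k+1}$ exactly as $m\mapsto -m$ does. Hence $M_m^*=M_{-m}^T$. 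Theorem~\ref{eigenvalue_theorem} gives $\sigma(M_m^*)=\{\overline{\beta_{m,1}},\overline{\beta_{m,2}}\}=\{\beta_{-m,1},\beta_{-m,2}\}$, with the labels matched through $\beta_{-m,n}=\overline{\beta_{m,n}}$. So for each $n$ I will choose a nonzero $\mathbf q^*_{m,n}\in\ker(M_{-m}^T-\beta_{-m,n}I_2)$. This is \eqref{Mm_adjoint_eigen}, and $e^*_{m,n}=e^{imx}\mathbf q^*_{m,n}$ is then an eigenfunction of $L^*$.

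For the orthogonality relations \eqref{orth}, I first handle $m\neq m'$. Since $\int_0^{2\pi}e^{i(m-m')x}\,dx=2\pi\delta_{mm'}$, the pairing is
\[
\langle e_{m,n},e^*_{m',n'}\rangle_{L^2}=2\pi\,\delta_{mm'}\,\langle \mathbf q_{m,n},\mathbf q^*_{m,n'}\rangle_{\mathbb C^2}.
\]
Everything therefore reduces to the $2\times2$ block at a fixed $m$. There I use the biorthogonality computation
\[
\beta_{m,n}\langle \mathbf q_{m,n},\mathbf q^*_{m,n'}\rangle=\langle M_m\mathbf q_{m,n},\mathbf q^*_{m,n'}\rangle=\langle \mathbf q_{m,n},M_m^*\mathbf q^*_{m,n'}\rangle=\beta_{m,n'}\langle \mathbf q_{m,n},\mathbf q^*_{m,n'}\rangle .
\]
The last equality holds because $M_m^*\mathbf q^*_{m,n'}=\overline{\beta_{m,n'}}\,\mathbf q^*_{m,n'}$ and the Hermitian product conjugates this scalar back to $\beta_{m,n'}$. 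The hypothesis $\beta_{m,1}\neq\beta_{m,2}$ then forces the off-diagonal pairings to vanish.

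For the diagonal pairings, distinct eigenvalues mean that $\mathbf q_{m,1},\mathbf q_{m,2}$ span $\mathbb C^2$. If $\langle \mathbf q_{m,n},\mathbf q^*_{m,n}\rangle$ were zero, then $\mathbf q^*_{m,n}$ would be orthogonal to both basis vectors and hence zero, which is impossible. So I can rescale $\mathbf q^*_{m,n}$ by the scalar $\bigl(2\pi\,\overline{\langle \mathbf q_{m,n},\mathbf q^*_{m,n}\rangle}\bigr)^{-1}$ to get the value $1/(2\pi)$. By the displayed identity this also gives the $L^2$ relation.

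The main point needing care is bookkeeping rather than analysis. The labels $n$ at $m$ and at $-m$ must be consistent with the two alternatives in Theorem~\ref{eigenvalue_theorem}, which decides which root of the characteristic equation of $M_{-m}^T$ pairs with $\mathbf q_{m,n}$. In the nonreal case one has $\beta_{m,1}=\overline{\beta_{m,2}}$, so the correct choice is $\beta_{-m,n}=\overline{\beta_{m,n}}$ and not the other root. I will check this directly using $\det(M_{-m}^T-\beta I)=\overline{\det(M_m-\overline\beta I)}$. The normalizations on different blocks are independent, so no compatibility between $m$ and $-m$ is needed. The final sentence of the lemma follows because Assumption~\ref{hopf_assumption} makes the critical pair simple with $\beta_{m,1}\neq\beta_{m,2}$ on the critical blocks.
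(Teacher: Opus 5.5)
Your proposal is correct and follows essentially the same route as the paper: you identify the block symbol of $L^*$ as $M_m^*=\overline{M_m}^{\,T}=M_{-m}^T$ via the sign flip of the odd powers of $m$, match labels through $\overline{\beta_{m,n}}=\beta_{-m,n}$ from Theorem~\ref{eigenvalue_theorem}, and obtain biorthogonality from the identity $\beta_{m,n}\langle\mathbf q_{m,n},\mathbf q^*_{m,n'}\rangle=\beta_{m,n'}\langle\mathbf q_{m,n},\mathbf q^*_{m,n'}\rangle$. Your spanning argument for $\langle\mathbf q_{m,n},\mathbf q^*_{m,n}\rangle\ne0$ and your explicit rescaling factor spell out what the paper's proof only asserts in one clause.
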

The proof is given in Appendix~\ref{app:spectral}.

\begin{assumption}\label{hopf_assumption}
There exist $m_c \in \mathbb{Z}_{\ge 1}$ and $\lambda_c \in \mathbb{R}$ such that:
\begin{enumerate}[label=(\roman*)]
\item $\tr M_{m_c}(\lambda_c) = 0$.
\item \label{ass:det} $\det M_m(\lambda_c) > 0$ for all $m \in \mathbb{Z}_{\ne 0}$.
\item $\dfrac{d}{d\lambda} \tr M_{m_c}(\lambda_c) \ne 0$.
\item $\tr M_m(\lambda_c) < 0$ for all $m \in \mathbb{Z}_{\ne 0}$ with $m \ne \pm m_c$.
\item $\displaystyle\liminf_{m\to\infty}\frac{\det M_m(\lambda_c)}{|\tr M_m(\lambda_c)|} > 0.$
\end{enumerate}
\end{assumption}

\begin{remark}\label{rem:meanzero-necessary}
The zero-mean space is invariant because $G$ is conservative and $L_\lambda$ has constant coefficients.
When $M_0=0$, as in Section~\ref{sec:app1}, choosing this space fixes the two conserved means and removes the associated neutral modes, leaving an isolated four-dimensional $O(2)$-Hopf center subspace.
\end{remark}
\begin{remark}\label{rem:v_redundant}
Assumption~\ref{cond:D} together with Assumption~\ref{hopf_assumption}(ii) automatically ensures Assumption~\ref{hopf_assumption}(v).
Indeed, in that case $b_{1,2m_L}b_{2,2m_L}>0$ and $b_{1,2m_L}+b_{2,2m_L}\ne0$; hence, as $|m|\to\infty$, $\det M_m(\lambda_c)\sim b_{1,2m_L}b_{2,2m_L}\,m^{4m_L}$ and $|\tr M_m(\lambda_c)|\sim|b_{1,2m_L}+b_{2,2m_L}|\,m^{2m_L}$.
Consequently condition~(v) is not an independent restriction in Theorems~\ref{thm:cm_reduction} and~\ref{thm:main}, which assume Assumption~\ref{cond:D}.
We retain it because it is the sharp spectral gap condition in the absence of Assumption~\ref{cond:D}, as Theorem~\ref{thm:spectral_gap_char} and Remark~\ref{rem:gap_necessary} show.
\end{remark}

\begin{lemma}\label{lem:PES}
Define the sets of critical and stable mode indices by
\begin{equation}\label{eq:Mcrit}
\mathfrak{M}_{\mathrm{center}} = \{(m_c,1),(m_c,2),(-m_c,1),(-m_c,2)\},
\qquad
\mathfrak{M}_{\mathrm{stable}} =
\bigl(\mathbb{Z}_{\ne0} \times \{1,2\}\bigr) \setminus \mathfrak{M}_{\mathrm{center}}.
\end{equation}
Under Assumption~\ref{hopf_assumption}, the eigenvalues of $L(\lambda_c)$ satisfy
\begin{equation}\label{PES}
\begin{aligned}
& \Re\beta_{m,n}(\lambda_c) = 0, && \forall (m,n)\in\mathfrak{M}_{\mathrm{center}},\\
& \Im\beta_{m,n}(\lambda_c) \ne 0, && \forall (m,n)\in\mathfrak{M}_{\mathrm{center}},\\
& \frac{d}{d\lambda}\Re\beta_{m,n}(\lambda_c) \ne 0, && \forall (m,n)\in\mathfrak{M}_{\mathrm{center}},\\
& \sup_{(m,n)\in\mathfrak{M}_{\mathrm{stable}}}\Re\beta_{m,n}(\lambda_c) < 0.
\end{aligned}
\end{equation}
In particular, the center subspace is exactly four-dimensional.
\end{lemma}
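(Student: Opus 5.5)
The plan is to work block by block in Fourier space. By Theorem~\ref{eigenvalue_theorem}, the spectrum is the union over $m\ne0$ of the roots of \eqref{char}. So everything reduces to elementary facts about the real quadratic $\beta^2-\tr M_m\,\beta+\det M_m$, combined with a uniformity argument as $|m|\to\infty$.

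\textbf{Step 1 (the symbol quantities are real and even in $m$).} From \eqref{Mm}, $\tr M_m=\sum_k(-1)^k(b_{1,2k}+b_{2,2k})m^{2k}$ is a real polynomial in $m^2$. For the determinant, the product of the off-diagonal entries is
\[
-\Bigl(\sum_k(-1)^kb_{1,2k+1}m^{2k+1}\Bigr)\Bigl(\sum_k(-1)^kb_{2,2k+1}m^{2k+1}\Bigr),
\]
which is real and even in $m$. Hence $\det M_m$ is also a real polynomial in $m^2$. Therefore the blocks $m$ and $-m$ have the same real characteristic polynomial, so the Assumption~\ref{hopf_assumption} conditions stated at $m_c$ transfer to $-m_c$.

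\textbf{Step 2 (the critical blocks $m=\pm m_c$).} At $\lambda_c$ we have $\tr M_{m_c}=0$ and $\det M_{m_c}>0$, so the roots are $\pm i\sqrt{\det M_{m_c}(\lambda_c)}$. These have zero real part and nonzero imaginary part, and they are distinct. By continuity in $\lambda$, the discriminant $(\tr M_{m_c})^2-4\det M_{m_c}$ stays negative near $\lambda_c$. There the roots are complex with $\Re\beta=\tfrac12\tr M_{m_c}(\lambda)$, so
\[
\frac{d}{d\lambda}\Re\beta=\tfrac12\,\frac{d}{d\lambda}\tr M_{m_c}(\lambda_c)\ne0
\]
by condition~(iii). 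Since the roots on each critical block are distinct, each block contributes exactly two independent eigenfunctions (Theorem~\ref{eigenvalue_theorem}). Together with Step~3, this shows the center subspace is exactly four-dimensional.

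\textbf{Step 3 (stable blocks: strict negativity).} For $m\ne\pm m_c$ we have $\tr M_m<0$ and $\det M_m>0$, so by Routh--Hurwitz both roots have negative real part. I will make this quantitative with the bound
\[
\max_n\Re\beta_{m,n}\le-\min\Bigl(\tfrac12|\tr M_m|,\ \frac{\det M_m}{|\tr M_m|}\Bigr).
\]
If the roots are complex, their common real part is $\tr M_m/2$. If they are real, both are negative with $|r_1|\le|r_2|\le|\tr M_m|$ and $r_1r_2=\det M_m$, so $|r_1|\ge\det M_m/|\tr M_m|$.

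\textbf{Step 4 (the uniform gap, the main point).} The bound of Step~3 is not automatically uniform in $m$, so this is the step that needs care. For the second term, condition~(v) gives $\det M_m/|\tr M_m|\ge c>0$ for all $|m|$ large.

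For the first term, $\tr M_m(\lambda_c)$ is a polynomial in $m^2$ that is negative for all $m\ne\pm m_c$. If it is nonconstant, it tends to $-\infty$. If it is constant, it equals a negative constant. In either case $|\tr M_m|/2$ is bounded below for large $|m|$.

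The finitely many remaining stable indices each have strictly negative real part. Taking the minimum over this finite set together with the tail bound gives $\sup_{\mathfrak M_{\mathrm{stable}}}\Re\beta_{m,n}(\lambda_c)<0$, which completes \eqref{PES}.
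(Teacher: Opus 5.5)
Your proof is correct and follows the paper's argument. You treat the critical blocks through $\tr M_{m_c}(\lambda_c)=0$, $\det M_{m_c}(\lambda_c)>0$ and $\Re\beta=\tfrac12\tr M_{m_c}(\lambda)$, and you obtain the uniform gap from $\tr M_m\to-\infty$ together with condition~(v), where your Vieta bound $|r_1|\ge\det M_m/|\tr M_m|$ does the job of the paper's estimate $\sqrt{\tau^2-4\delta}\le|\tau|-2\eta$ (the constant-trace case you allow for cannot occur, since a constant trace would vanish by~(i) and contradict~(iv), but this is harmless).
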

The proof is given in Appendix~\ref{app:spectral}.
The first three statements in Lemma~\ref{lem:PES} show that the system undergoes a Hopf bifurcation at $\lambda=\lambda_c$ with critical modes $e_{\pm m_c,1}$ and $e_{\pm m_c,2}$ and frequency $\omega_c := \Im\beta_{m_c,1}(\lambda_c) = \sqrt{\det M_{m_c}(\lambda_c)} > 0$. The last statement is a spectral gap condition that ensures the existence of a center manifold and the validity of the reduction.

\begin{theorem}\label{thm:spectral_gap_char}
Suppose conditions~(i)--(iv) of Assumption~\ref{hopf_assumption} hold.
Then $$\displaystyle\sup_{(m,n)\in\mathfrak{M}_{\mathrm{stable}}}\Re\beta_{m,n}(\lambda_c)<0$$ holds if and only if condition~(v) holds.
\end{theorem}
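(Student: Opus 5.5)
The proof works block by block: on each Fourier block $M_m$ we write the real parts of the two roots of \eqref{char} explicitly in terms of $\tau_m:=\tr M_m(\lambda_c)$ and $d_m:=\det M_m(\lambda_c)$. Condition (ii) gives $d_m>0$, and condition (iv) gives $\tau_m<0$ for $|m|\ne m_c$. Since the center set contains both roots of the blocks $\pm m_c$, the supremum over $\mathfrak M_{\mathrm{stable}}$ is a supremum over whole blocks $|m|\ne m_c$. The real parts on such a block are as follows. If $\tau_m^2<4d_m$ (complex pair), both roots have real part $\tau_m/2$. If $\tau_m^2\ge 4d_m$, the roots are real and the larger one is
\[
\beta_m^+=\frac{\tau_m+\sqrt{\tau_m^2-4d_m}}{2}=\frac{-2d_m}{\tau_m-\sqrt{\tau_m^2-4d_m}}=-\frac{2d_m}{|\tau_m|+\sqrt{\tau_m^2-4d_m}},
\]
using $\tau_m<0$. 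This rationalized form is the key identity. Set $\rho_m:=d_m/|\tau_m|$. In both cases the largest real part on block $m$ lies between $-\rho_m$ and $-\tfrac12\rho_m$ in the real case, and equals $-|\tau_m|/2$ in the complex case.

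For (v) $\Rightarrow$ gap, fix $c>0$ and $m_0$ with $\rho_m\ge c$ for $|m|\ge m_0$. In the real case $\beta_m^+\le -d_m/|\tau_m|\le -c$, since the denominator is at most $2|\tau_m|$. In the complex case $|\tau_m|^2<4d_m$ gives nothing directly, so the largest real part is $-|\tau_m|/2$ and we need $|\tau_m|$ bounded below. For that we use $\tau_m^2<4d_m$ together with $d_m\ge c|\tau_m|$; the latter does not bound $|\tau_m|$ below by itself. This is where I expect the main difficulty: a complex pair with $\tau_m\to0^-$ would destroy the gap. I would handle it with a unified bound $\max\Re\le-\min(|\tau_m|/2,\rho_m/2)$ and then show $|\tau_m|$ cannot tend to $0$ along a subsequence with $\rho_m\ge c$. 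This fails in general, so instead I would verify directly whether (v) is meant to control exactly this. On reflection, if $\tau_m\to0$ with $d_m$ bounded below, then $\rho_m\to\infty$, which is consistent with (v) but has real part $\to0$. The equivalence therefore needs $|\tau_m|$ to stay away from zero, and I would extract this from polynomial growth: $\tau_m$ is a polynomial in $m$ with $\tau_m<0$ for all $|m|\ne m_c$. Either it is nonconstant, so $|\tau_m|\to\infty$, or it is a negative constant. In both cases $\inf_{|m|\ne m_c}|\tau_m|>0$, since each finite set of $m$ contributes a strictly negative value. This settles the complex case. Finitely many blocks $|m|<m_0$ each have strictly negative maximal real part, by $\tau_m<0$ and $d_m>0$, so the supremum is negative.

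For gap $\Rightarrow$ (v), suppose $\liminf\rho_m=0$ and take a subsequence with $\rho_{m_k}\to0$. If the block is real, then $\beta^+\ge -2\rho_{m_k}\to0$. If the block is complex, $4d>\tau^2$ gives $|\tau|/2<2\rho$, so $\Re=-|\tau|/2>-2\rho_{m_k}\to0$. Either way the supremum over $\mathfrak M_{\mathrm{stable}}$ is $0$, contradicting the gap.
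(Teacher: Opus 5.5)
Your proof is correct and follows the paper's argument. You split each non-critical block by the sign of $\tau_m^2-4d_m$, and you use that $\tau_m$ is a nonconstant polynomial in $m^2$ to keep $|\tau_m|$ away from zero on complex blocks; a constant trace would vanish by (i), contradicting (iv), so your ``negative constant'' case cannot actually occur. Your rationalized formula for $\beta_m^+$ is a tidier substitute for the paper's $\sqrt{1-x}\le 1-x/2$ and squaring steps. It also gives necessity without any growth of the trace, since $\max_n\Re\beta_{m,n}\ge-2\rho_m$ on every block with $|m|\ne m_c$. One correction: the rationalization places $\beta_m^+$ in $[-2\rho_m,-\rho_m]$, not between $-\rho_m$ and $-\tfrac12\rho_m$ as your summary sentence states. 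The bounds you actually use later, $\beta_m^+\le-\rho_m$ and $\beta_m^+\ge-2\rho_m$, are the correct ones.
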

The proof is given in Appendix~\ref{app:spectral}. The next remark shows the necessity of Assumption~\ref{hopf_assumption}(v) for the spectral gap.
\begin{remark}\label{rem:gap_necessary}
Take the linear operator $L_{11} = -\partial_x^4 + \lambda$, $L_{22} = 0$, $L_{12} = L_{21} = \partial_x$.
Then $\tr M_m = -m^4 + \lambda$ and $\det M_m = m^2$, so that $m_c = 1$ and $\lambda_c = 1$.
Conditions~(i)--(iv) in Assumption~\ref{hopf_assumption} hold, but
\[
\frac{\det M_m}{|\tr M_m|} = \frac{m^2}{m^4 - 1} \to 0, \qquad m \to \infty,
\]
so Assumption~\ref{hopf_assumption}(v) fails.
A direct computation confirms the failure of the spectral gap: the eigenvalues are $\beta_{m,n} = \tfrac{1}{2}\bigl(-m^4 + 1 \pm \sqrt{(m^4-1)^2 - 4m^2}\bigr)$, with $\beta_{m,2}\sim -m^{-2}\to 0^-$ as $m\to\infty$, hence $\sup_{(m,n)\in\mathfrak{M}_{\mathrm{stable}}}\Re\beta_{m,n} = 0$.
\end{remark}

\begin{remark}\label{rem:mc_unique}
When $m_L=1$, Assumption~\ref{hopf_assumption}~\textit{(i)} and~\textit{(iv)} force $m_c=1$ since $\tr M_m$ is strictly decreasing in $m^2$.
Higher critical wavenumber $m_c\ge2$ is possible only when $m_L\ge2$ i.e.\ when the principal part of $L$ is at least fourth order.
\end{remark}

\begin{lemma}\label{lem:crit_norm}
Suppose Assumption~\ref{hopf_assumption} holds and let $\lambda=\lambda_c$.
The vector $R\,\overline{\mathbf{q}_{m_c,1}}$ is an eigenvector of $M_{m_c}$ for the eigenvalue $\beta_{m_c,2}=\overline{\beta_{m_c,1}}$, so that
\begin{equation}\label{eq:crit_norm}
R\,\overline{\mathbf{q}_{m_c,1}} = \gamma\,\mathbf{q}_{m_c,2} \quad\text{for some } \gamma\in\mathbb{C}\setminus\{0\}.
\end{equation}
We normalize $\mathbf{q}_{m_c,2}$ so that $\abs{\gamma}=1$, and set $\mathbf{q}_{-m_c,n}:=\overline{\mathbf{q}_{m_c,n}}$, equivalently $e_{-m_c,n}=\overline{e_{m_c,n}}$, for $n=1,2$.
Then
\[
Re_{m_c,1} = \overline{\gamma}\, e_{-m_c,2}, \qquad Re_{m_c,2} = \overline{\gamma}\, e_{-m_c,1}.
\]
\end{lemma}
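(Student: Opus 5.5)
The plan is to reduce everything to one matrix identity for the Fourier symbol. On $\mathbb C^2$ we let $R$ act by $R=\operatorname{diag}(1,-1)$, consistent with its action on functions: $R\bigl(e^{imx}\mathbf q\bigr)(x)=e^{-imx}\operatorname{diag}(1,-1)\mathbf q$. The identity we aim for is
\[
R\,M_m\,R=\overline{M_m}=M_{-m}.
\]

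\emph{Step 1: the symbol identity.} This follows from \eqref{Mm}. The diagonal entries $(-1)^k b_{i,2k}m^{2k}$ are real. The off-diagonal entries $(-1)^k\, i\, b_{i,2k+1}m^{2k+1}$ are purely imaginary, since the $b_{i,j}$ are real. Conjugation by $\operatorname{diag}(1,-1)$ flips the sign of exactly the off-diagonal entries, and so does complex conjugation. Hence $RM_mR=\overline{M_m}$. Replacing $m$ by $-m$ also flips only the odd powers $m^{2k+1}$, so $\overline{M_m}=M_{-m}$ as well.

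\emph{Step 2: the eigenvector claim.} Let $\beta_{m_c,1}$ have eigenvector $\mathbf q_{m_c,1}$. Using $R^2=I$ and Step 1,
\[
M_{m_c}R\overline{\mathbf q_{m_c,1}}
=R\,\overline{M_{m_c}}\,\overline{\mathbf q_{m_c,1}}
=R\,\overline{M_{m_c}\mathbf q_{m_c,1}}
=\overline{\beta_{m_c,1}}\,R\overline{\mathbf q_{m_c,1}} .
\]
By Lemma~\ref{lem:PES} together with Theorem~\ref{eigenvalue_theorem}, $\overline{\beta_{m_c,1}}=\beta_{m_c,2}$ and $\Im\beta_{m_c,1}\neq0$. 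So the two eigenvalues of the $2\times2$ matrix $M_{m_c}$ are distinct, and each eigenspace is one-dimensional. Since $R\overline{\mathbf q_{m_c,1}}\ne0$, it is a nonzero multiple of $\mathbf q_{m_c,2}$, which gives \eqref{eq:crit_norm} with $\gamma\neq0$.

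\emph{Step 3: normalization.} Replacing $\mathbf q_{m_c,2}$ by $c\,\mathbf q_{m_c,2}$ with $|c|=|\gamma|$ makes $|\gamma|=1$. To keep \eqref{orth}, the adjoint vector $\mathbf q^*_{m_c,2}$ is rescaled by $1/\bar c$. This is the only point where compatibility with Lemma~\ref{lem:adjoint_eigen} needs a check, and it is routine.

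\emph{Step 4: the action on eigenfunctions.} Conjugating \eqref{eq:crit_norm} gives $R\mathbf q_{m_c,1}=\bar\gamma\,\overline{\mathbf q_{m_c,2}}$. Therefore
\[
Re_{m_c,1}=e^{-im_cx}R\mathbf q_{m_c,1}=\bar\gamma\, e^{-im_cx}\,\overline{\mathbf q_{m_c,2}}=\bar\gamma\, e_{-m_c,2}.
\]
For the second relation, invert \eqref{eq:crit_norm}: $\mathbf q_{m_c,2}=\gamma^{-1}R\overline{\mathbf q_{m_c,1}}$. Then $R\mathbf q_{m_c,2}=\gamma^{-1}\overline{\mathbf q_{m_c,1}}$. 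Since $|\gamma|=1$, we have $\gamma^{-1}=\bar\gamma$, so $Re_{m_c,2}=\bar\gamma\, e_{-m_c,1}$.

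The only genuinely substantive point is Step 1, namely getting the sign bookkeeping of $i$ and $(-1)^k$ right so that $R$-conjugation coincides with complex conjugation. Everything else is linear algebra for a $2\times2$ matrix with simple spectrum.
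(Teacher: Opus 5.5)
Your proposal is correct and follows essentially the same route as the paper: first the symbol identity $RM_mR=\overline{M_m}=M_{-m}$, then simplicity of the conjugate pair $\pm i\omega_c$ from Lemma~\ref{lem:PES} to obtain \eqref{eq:crit_norm}, and finally conjugating \eqref{eq:crit_norm} and applying $R$ to it. You also spell out two steps the paper leaves implicit: the use of $\gamma^{-1}=\bar\gamma$ in the second relation, and the check that rescaling $\mathbf q^*_{m_c,2}$ by $1/\bar c$ preserves \eqref{orth}.
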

\begin{proof}
By~\eqref{Mm} the diagonal entries of $M_m$ are real and even in $m$, and the off-diagonal entries are purely imaginary and odd in $m$; hence $RM_mR = M_{-m} = \overline{M_m}$, where $R$ also denotes its action $\operatorname{diag}(1,-1)$ on coefficient vectors.
Therefore $M_{m_c}R\,\overline{\mathbf{q}_{m_c,1}} = R\,\overline{M_{m_c}\mathbf{q}_{m_c,1}} = \overline{\beta_{m_c,1}}\,R\,\overline{\mathbf{q}_{m_c,1}}$, and $\overline{\beta_{m_c,1}}=\beta_{m_c,2}$ is simple, since $\beta_{m_c,1}\ne\beta_{m_c,2}$ are purely imaginary and nonzero by Lemma~\ref{lem:PES}; this gives~\eqref{eq:crit_norm}.
Conjugating~\eqref{eq:crit_norm} gives $R\mathbf{q}_{m_c,1} = \overline{\gamma}\,\overline{\mathbf{q}_{m_c,2}} = \overline{\gamma}\,\mathbf{q}_{-m_c,2}$, and applying $R$ to~\eqref{eq:crit_norm} gives $R\mathbf{q}_{m_c,2} = \overline{\gamma}\,\overline{\mathbf{q}_{m_c,1}} = \overline{\gamma}\,\mathbf{q}_{-m_c,1}$, which are the stated relations since $(Re_{m,n})(x) = e^{-imx}R\mathbf{q}_{m,n}$.
\end{proof}

\section{Main Results}\label{sec:results}
\begin{theorem}\label{thm:cm_reduction}
Suppose Assumptions~\ref{cond:D}, \ref{ass:S} and~\ref{hopf_assumption} hold, and fix an integer $k\ge2$.
Then, for $\lambda$ in a neighborhood of $\lambda_c$ depending on $k$, the system~\eqref{main} possesses a four-dimensional local center manifold $\mathcal{W}^c(\lambda)$ of class $C^k$: it is locally invariant, contains all solutions of~\eqref{main} that remain sufficiently small for all $t\in\mathbb{R}$, and at $\lambda=\lambda_c$ it is tangent at the origin to the center subspace $\operatorname{span}\{e_{\pm m_c,1},\,e_{\pm m_c,2}\}$.
\end{theorem}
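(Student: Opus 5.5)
We will obtain the statement from a standard abstract center manifold theorem for semilinear evolution equations with unbounded nonlinearity, in the form given by Haragus--Iooss \cite{haragus2010local} (Theorem~2.9 and its parameter-dependent version, Theorem~3.3). Setting $\mu=\lambda-\lambda_c$, we rewrite \eqref{main} as $\dot U = L_{\lambda_c}U + (L_\lambda-L_{\lambda_c})U + G(U)$ on the triple $\mathcal Z=\mathcal H^{2m_L}\subset\mathcal Y=\mathcal H^{2m_L-s}\subset\mathcal X=\mathcal H$. Here $s\in[0,1]$ is chosen so that $G$ maps $\mathcal Z$ into $\mathcal Y$. Taking $s=1$ works: $r_G\le 2m_L-1$ by \eqref{rGsL}, and $\mathcal H^{1}$ is an algebra, so every monomial $(Du)^{\alpha_1}(Dv)^{\alpha_2}$ is a product of factors in $H^{1}$ and lies in $H^{1}\subset H^{2m_L-?}$. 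More simply we may take $\mathcal Y=\mathcal X$. The abstract theorem then needs three hypotheses, which we check in order.

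\textbf{(H1) Regularity of the vector field.} Since $G$ is a differential polynomial, it is a finite sum of bounded multilinear maps $\mathcal Z^{p}\to\mathcal Y$, by the Banach algebra property of $H^{s}$ for $s>1/2$ together with \eqref{rGsL}. Hence $G$ is $C^\infty$, and $G(0)=0$, $DG(0)=0$ because $|\alpha_1|+|\alpha_2|\ge2$. The perturbation $\mu\mapsto L_\lambda-L_{\lambda_c}$ is smooth into $\mathcal L(\mathcal Z,\mathcal X)$, since the $b_{i,k}$ are smooth in $\lambda$. Finally, the zero-mean space is invariant by Remark~\ref{rem:meanzero-necessary}, so the equation is well posed on $\mathcal H$.

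\textbf{(H2) Spectral decomposition.} By Theorem~\ref{eigenvalue_theorem} and Lemma~\ref{lem:PES}, the spectrum of $L_{\lambda_c}$ splits as $\sigma_0=\{\pm i\omega_c\}$, with four-dimensional spectral subspace spanned by $e_{\pm m_c,n}$, and $\sigma_-$ with $\sup\Re\sigma_-<0$. The gap comes from Lemma~\ref{lem:PES}, equivalently Theorem~\ref{thm:spectral_gap_char} with Remark~\ref{rem:v_redundant}. The spectral projection $P_0$ is explicit via the adjoint vectors of Lemma~\ref{lem:adjoint_eigen}, as $P_0U=\sum_{\mathfrak M_{\mathrm{center}}}\langle U,e^*_{m,n}\rangle e_{m,n}$. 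It is bounded from $\mathcal X$ to $\mathcal Z$ because it is finite rank with smooth range.

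\textbf{(H3) Resolvent estimate.} This is the main obstacle. We need $\|(i\omega-L)^{-1}\|_{\mathcal L(\mathcal X)}\le C/|\omega|$ for large $|\omega|$ on the hyperbolic part. Since $L$ is a Fourier multiplier, the estimate reduces to a bound, uniform in $m$ and $\omega$, on the $2\times2$ matrices $(i\omega-M_m)^{-1}$. Assumption~\ref{cond:D} is what makes this work. We write $M_m=D_m+E_m$, where
\[
D_m=\operatorname{diag}\bigl((-1)^{m_L}b_{1,2m_L}m^{2m_L},\,(-1)^{m_L}b_{2,2m_L}m^{2m_L}\bigr)
\]
and every entry of $E_m$ is $O(|m|^{2m_L-1})$. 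This order bound holds because the odd off-diagonal orders are at most $2m_L-1$ and the lower diagonal orders are at most $2m_L-2$.

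By Assumption~\ref{hopf_assumption}(ii) and Remark~\ref{rem:v_redundant}, the two leading coefficients are real and nonzero with $b_{1,2m_L}b_{2,2m_L}>0$. Together with $\tr M_m<0$, this forces $(-1)^{m_L}b_{i,2m_L}<0$, so $|i\omega-d_i|\ge c(|\omega|+m^{2m_L})$. Then $(i\omega-M_m)^{-1}=(i\omega-D_m)^{-1}\bigl(I-E_m(i\omega-D_m)^{-1}\bigr)^{-1}$, where the perturbation has norm $O(|m|^{-1})$. This gives the bound $C/(|\omega|+m^{2m_L})$ for $|m|\ge m_0$.

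The finitely many remaining $m$ involve fixed matrices whose eigenvalues have negative real part off the critical block, so $\|(i\omega-M_m)^{-1}\|=O(1/|\omega|)$ for large $|\omega|$. Combining the two ranges gives (H3). The same decay $(|\omega|+m^{2m_L})^{-1}$ also shows that $L$ is sectorial, so $L$ generates an analytic semigroup. This provides the optimal regularity (solvability of the linear problem in exponentially weighted spaces) that the abstract theorem requires, either directly or via Haragus--Iooss Remark~2.18.

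With (H1)--(H3) verified, the parameter-dependent center manifold theorem yields, for each $k$, a neighborhood of $\mu=0$ and a $C^k$ map $\Psi\colon E_0\times(-\mu_0,\mu_0)\to\mathcal Z_h$ with $\Psi(0,0)=0$ and $D_{U}\Psi(0,0)=0$. Its graph $\mathcal W^c(\lambda)$ is locally invariant, contains all small bounded solutions, and is tangent to $E_0=\operatorname{span}\{e_{\pm m_c,n}\}$ at $\lambda_c$. It is four-dimensional by Lemma~\ref{lem:PES}.
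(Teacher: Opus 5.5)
\textbf{The gap is your choice $\mathcal Y=\mathcal X$, and it breaks the application of the abstract theorem.} The claim just before it, that $s=1$ works, is also false in general. A monomial containing $\partial_x^{r_G}u$ with $u\in H^{2m_L}$ lies only in $H^{2m_L-r_G}$, and that space is not contained in $H^{2m_L-1}$ once $r_G\ge2$.

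Theorem~2.9 of Haragus--Iooss rests on its linear hypothesis (Hypothesis~2.7 there). It requires $\dot u_h=L_hu_h+f$ to be solvable with $f\mapsto u_h$ bounded from $C_\eta(\mathbb R,\mathcal Y_h)$ into $C_\eta(\mathbb R,\mathcal Z_h)$. For $\mathcal Y=\mathcal X$ and $\mathcal Z=D(L)$ this is maximal regularity in spaces of continuous functions. By Baillon's theorem that fails for every unbounded generator on a Hilbert space. Sectoriality of $L$ therefore cannot supply it, either directly or through a remark of Haragus--Iooss.

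Your own symbol bound exhibits the obstruction. With $\mathcal Y=\mathcal X$ you get $\|(i\omega-L)^{-1}\|_{\mathcal L(\mathcal X,\mathcal Z)}\approx\sup_m m^{2m_L}(|\omega|+m^{2m_L})^{-1}\approx1$, which has no decay in $|\omega|$. This is exactly the case $r_G=2m_L$ that the paper sets aside as quasilinear in Section~\ref{sec:outlook}. You use the order condition~\eqref{rGsL} only to make $G$ well defined. Its real role is to leave at least one derivative of room between $\mathcal Y$ and $\mathcal X$.

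\textbf{Repair and comparison with the paper.} Take $\mathcal Y=\mathcal H^{2m_L-r_G}$, as the paper does. Alternatively take $\mathcal Y=\mathcal H^1$, which your own observation that every monomial lies in $H^1$ already justifies. Then $G\colon\mathcal Z\to\mathcal Y$ is smooth because $2m_L-r_G\ge1>\tfrac12$, and its values have zero mean by conservativity. Your bound $\|(i\omega-M_m)^{-1}\|\le C/(|\omega|+m^{2m_L})$ then gives
\[
\|(i\omega-L)^{-1}\|_{\mathcal L(\mathcal Y,\mathcal Z)}\lesssim\sup_{m\ne0}\frac{|m|^{r_G}}{|\omega|+m^{2m_L}}\lesssim|\omega|^{-(1-r_G/(2m_L))},
\]
which decays precisely because $r_G\le 2m_L-1$, and the abstract theorem applies.

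With this change, your (H3) is a genuinely different and more economical route than Theorem~\ref{thm:resolvent}. The paper bounds $(i\tilde\omega I_2-M_m)^{-1}$ through the adjugate and $p_m(i\tilde\omega)$ over three regimes of $(\tilde\omega,m)$. It proves only the $\mathcal L(\mathcal X)$ bound $C/|\tilde\omega|$ and leaves the rest of Hypothesis~2.15 to Remark~2.16 of Haragus--Iooss. Your splitting $M_m=D_m+E_m$, with $\|E_m(i\omega-D_m)^{-1}\|=O(|m|^{-1})$, gives the sharper bound $C/(|\omega|+m^{2m_L})$, and both estimates follow from it at once.

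One further point needs attention once $\mathcal Y\ne\mathcal X$. In the parameter-dependent theorem the term $(L_\lambda-L_{\lambda_c})U$ is part of the nonlinearity, so it must map $\mathcal Z$ into $\mathcal Y$, not merely into $\mathcal X$ as your (H1) states. This requires the $\lambda$-dependent part of $L$ to have order at most $2m_L-1$, with $\mathcal Y$ chosen to account for that order as well. The paper's proof leaves this implicit too.
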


\begin{theorem}\label{thm:main}
Under the hypotheses of Theorem~\ref{thm:cm_reduction}, and with the critical eigenvectors normalized as in Lemma~\ref{lem:crit_norm}, there is a near-identity, $O(2)$-equivariant change of coordinates that brings the flow on the center manifold into the normal form
\begin{equation}\label{reduced_system}
\begin{aligned}
\frac{dZ_1}{dt} &= \beta_{m_c,1}(\lambda)\, Z_1 + Z_1\bigl(\zeta|Z_1|^2 + \xi|Z_2|^2\bigr) + O(\abs{\lambda-\lambda_c}\abs{Z}^3 + \abs{Z}^5), \\
\frac{dZ_2}{dt} &= \beta_{m_c,2}(\lambda)\, Z_2 + Z_2\bigl(\bar{\xi}|Z_1|^2 + \bar{\zeta}|Z_2|^2\bigr) + O(\abs{\lambda-\lambda_c}\abs{Z}^3 + \abs{Z}^5),
\end{aligned}
\end{equation}
where $\abs{Z} := \left(\abs{Z_1}^2 + \abs{Z_2}^2\right)^{1/2}$.
Here $Z_1=A+O(\abs{(A,B)}^3)$ and $Z_2=B+O(\abs{(A,B)}^3)$ are near-identity corrections of the critical amplitudes $A = \langle\psi, e_{m_c,1}^*\rangle_{L^2}$ and $B = \langle\psi, e_{m_c,2}^*\rangle_{L^2}$, and the coefficients $\zeta$ and $\xi$ are evaluated at $\lambda=\lambda_c$.
The coefficients $\zeta$ and $\xi$ in~\eqref{reduced_system} decompose as
\[
\zeta = \hat{s}_{1,1} + \hat{c}_{1,1}, \qquad \xi = \hat{s}_{1,2} + \hat{c}_{1,2},
\]
with universally valid matrix-resolvent formulas given in Section~\ref{sec:formulas}; equivalent scalar eigenbasis formulas are also given there when $M_{2m_c}$ has distinct eigenvalues.
\end{theorem}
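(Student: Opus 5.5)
We will derive the normal form in three stages: reduce to amplitude equations on the center manifold, use $O(2)$-equivariance to restrict which monomials can appear, and compute the cubic coefficients by a second-order center-manifold expansion.

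\emph{Amplitude equations.} Theorem~\ref{thm:cm_reduction} supplies the center manifold $\mathcal W^c(\lambda)$. We write points on it as
\[
\psi = A e_{m_c,1} + B e_{m_c,2} + \bar A e_{-m_c,1} + \bar B e_{-m_c,2} + \Phi_\lambda(A,B,\bar A,\bar B),
\]
where $\Phi_\lambda = O(|(A,B)|^2)$ takes values in the stable spectral subspace. Pairing~\eqref{main} with $e^*_{m_c,n}$ and using the orthogonality relations~\eqref{orth} of Lemma~\ref{lem:adjoint_eigen} gives
\[
\dot A = \beta_{m_c,1}A + \langle G(\psi), e^*_{m_c,1}\rangle_{L^2},
\]
and the analogous equation for $B$.

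\emph{Symmetry constraints.} The translation $T_h$ acts on the amplitudes by $(A,B)\mapsto (e^{im_ch}A, e^{im_ch}B)$. By Lemma~\ref{lem:crit_norm}, the reflection $R$ acts by $(A,B)\mapsto(\gamma\bar B,\gamma\bar A)$ up to the fixed phase $\gamma$. The center manifold can be chosen $O(2)$-equivariant by uniqueness of the Taylor coefficients. The reduced vector field is then equivariant as well, and the classical equivariant Hopf normal form theorem (Golubitsky--Stewart, Haragus--Iooss) yields a near-identity polynomial change of variables $Z=(A,B)+O(|(A,B)|^3)$ that removes every non-resonant cubic term. Resonance with $\beta_{m_c,1}=i\omega_c$ and $\beta_{m_c,2}=-i\omega_c$, combined with the $S^1\times S^1$ phase invariance coming from translations and the Hopf rotation, leaves only $Z_1|Z_1|^2$ and $Z_1|Z_2|^2$ in the first equation. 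Applying $R$ shows that the second equation has the conjugated coefficients $\bar\xi,\bar\zeta$.

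\emph{Quadratic terms.} Translation invariance forces quadratic terms in $\dot A$ to carry Fourier index $m_c$. The only candidates are products of a mode with index $2m_c$ and one with index $-m_c$. No such quadratic monomial in $(A,B,\bar A,\bar B)$ alone exists, so $O(2)$ kills the quadratic normal form terms. The cubic coefficients therefore arise from two sources:
\begin{itemize}
\item[(a)] the direct cubic part $G_3$ of $G$ evaluated on critical modes, giving $\hat s_{1,1}$ and $\hat s_{1,2}$;
\item[(b)] the quadratic part $G_2$ acting on (critical mode, $\Phi_2$), giving $\hat c_{1,1}$ and $\hat c_{1,2}$.
\end{itemize}
For (b) we solve the homological equation for $\Phi_2$ mode by mode. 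For instance, the $A^2$ component lies on the $e^{2im_cx}$ Fourier block and satisfies
\[
(2\beta_{m_c,1}-M_{2m_c})\,\mathbf w_{AA} = \widehat{G_2}(e_{m_c,1},e_{m_c,1}),
\]
while the $AB$ and $A\bar B$ components live on the $2m_c$ and $0$ blocks. The $0$ block is absent on the mean-zero space; conservativity (Assumption~\ref{ass:S}\ref{S3}) gives $\widehat G(0)=0$, so no solvability issue arises. Projecting $G_2(e_{\cdot},\Phi_2)$ onto $e^*_{m_c,1}$ then gives $\hat c$.

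\emph{Invertibility and the formula step.} The matrix $\beta_{m_c,1}+\beta_{m_c,n}-M_{2m_c}$ is invertible because its eigenvalue arguments, $2i\omega_c$ or $0$, lie off $\sigma(M_{2m_c})$. This follows from Assumption~\ref{hopf_assumption}(iv): $\tr M_{2m_c}<0$ together with $\det>0$ puts both eigenvalues in the open left half plane. Only invertibility is used, not diagonalizability, which is why the matrix-resolvent formula holds universally. When $M_{2m_c}$ has distinct eigenvalues, the scalar eigenbasis formulas follow by expanding in $\mathbf q_{2m_c,n}$ and $\mathbf q^*_{2m_c,n}$.

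The main obstacle is the bookkeeping in step (b). We must expand the differential monomials $(Du)^{\alpha_1}(Dv)^{\alpha_2}$ on sums of Fourier modes, track the factors $(im)^j$ and all multiplicities, and show that the $O(|\lambda-\lambda_c||Z|^3+|Z|^5)$ remainder is controlled. The remainder bound follows from $C^k$ smoothness with $k\ge 5$ and the absence of quartic terms, which the $S^1$ resonance excludes. We would first organize the expansion by multilinearizing $G$ into symmetric forms $\mathcal G_2$ and $\mathcal G_3$ with Fourier symbols expressed through $\mathbf a_{\alpha_1,\alpha_2}$, and then read off $\hat s$ and $\hat c$ as contractions with $\mathbf q^*_{m_c,1}$.
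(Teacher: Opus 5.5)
Your proposal is correct and follows the same route as the paper: projection onto $e^*_{m_c,n}$, and the cubic self-interaction with even orders excluded by translation parity. Then comes the cross-interaction through the $2m_c$ block of the quadratic center-manifold term, via the resolvents $(2\beta_{m_c,1}I_2-M_{2m_c})^{-1}$ and $-M_{2m_c}^{-1}$ (only invertibility is needed, and the zero block is removed by the mean-zero space and conservativity). Finally, resonance plus translation equivariance give the cubic normal form, and the reflection $(A,B)\mapsto(\gamma\bar B,\gamma\bar A)$ gives $\zeta'=\bar\zeta$, $\xi'=\bar\xi$.

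The one procedural difference is that you obtain the quadratic center-manifold coefficients from the invariance (homological) equation, whereas the paper evaluates the Lyapunov--Perron integral; both give identical resolvent formulas. The multi-index bookkeeping you defer is done in the paper by a binomial expansion of $(Du_c)^{\alpha}$ in $\mathfrak m_e,\mathfrak m_o$ together with $\mathfrak m_t\equiv\mathfrak m_o \pmod 2$. When you carry it out, keep the factor $2$ in the cross term $2G_2(\psi_c,\Phi_2)$.
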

The proofs of Theorem~\ref{thm:cm_reduction} and Theorem~\ref{thm:main} are given in Section~\ref{sec:proof}.

\begin{remark}\label{rem:norm_invariance}
The values of $\zeta$ and $\xi$ depend on the scaling of the critical eigenvectors: replacing $\mathbf{q}_{m_c,1}$ by $s\,\mathbf{q}_{m_c,1}$ with $s\ne0$, and rescaling $\mathbf{q}_{m_c,2}$ accordingly via Lemma~\ref{lem:crit_norm}, rescales $(\zeta,\xi)$ to $(\abs{s}^2\zeta,\abs{s}^2\xi)$.
The signs of $\zetaR$ and $\xiR$, and hence the classification below, are independent of this choice.
\end{remark}
 
The possible bifurcation scenarios are determined by the signs of $\zetaR$ and $\xiR$, where
\[
\zetaR:=\Re\zeta \quad\text{and}\quad \xiR:=\Re\xi.
\]
\begin{definition}\label{def:tw_sw}
The cubic truncation of~\eqref{reduced_system} admits two primary bifurcating solutions:
\begin{enumerate}[label=(\roman*)]
\item A \textbf{traveling wave} (TW): $(Z_1,Z_2) = (r\,e^{i\theta t},\, 0)$ with $r^2 = -\Re\beta_{m_c,1}/\zetaR > 0$ and $\theta = \Im\beta_{m_c,1} + O(r^2)$.
\item A \textbf{standing wave} (SW): $(Z_1,Z_2) = r(e^{i\theta t},\, e^{-i\theta t})$ with $r^2 = -\Re\beta_{m_c,1}/(\zetaR+\xiR) > 0$ and $\theta = \Im\beta_{m_c,1} + O(r^2)$.
\end{enumerate}
\end{definition}
In the original PDE variables, to the leading order in $r$, the TW and SW take the form
\[
\begin{bmatrix} u & v \end{bmatrix}^T = 2r\,\Re\bigl(e^{i(m_c x + \theta t)}\mathbf{q}_{m_c,1}\bigr)
\qquad\text{(TW)},
\]
\[
\begin{bmatrix} u & v \end{bmatrix}^T = 2r\,\Re\bigl(e^{i(m_c x + \theta t)}\mathbf{q}_{m_c,1} + e^{i(m_c x - \theta t)}\mathbf{q}_{m_c,2}\bigr)
\qquad\text{(SW)}.
\]
See Figure~\ref{fig:tw_sw} for a schematic illustration.

\begin{figure}[ht]
\centering

\begin{tikzpicture}[scale=.5]
\begin{axis}[
  width=5.5cm, height=4.5cm,
  xlabel={$x$}, ylabel={$t$},
  xlabel style={font=\huge},
  ylabel style={rotate=-90, font=\huge},
  title style={font=\huge},
  xtick=\empty,
  ytick=\empty,
  domain=0:12.56, y domain=0:12.56,
  samples=30,
  view={0}{90},
  colormap/hot,
  point meta min=-1, point meta max=1,
  title={TW}
]
\addplot3[contour filled={number=12}] 
  {cos(deg(x+y))};
\end{axis}
\end{tikzpicture}
\hspace{0.2cm}
\begin{tikzpicture}[scale=.5]
\begin{axis}[
  width=5.5cm, height=4.5cm,
  xlabel={$x$}, ylabel={$t$},
  xlabel style={font=\huge},
  ylabel style={rotate=-90, font=\huge},
  title style={font=\huge},
  xtick=\empty,
  ytick=\empty,
  domain=0:12.56, y domain=0:12.56,
  samples=30,
  view={0}{90},
  colormap/hot,
  point meta min=-1, point meta max=1,
  title={SW}
]
\addplot3[contour filled={number=12}] 
  {cos(deg(x))*cos(deg(y))};
\end{axis}
\end{tikzpicture}
\hspace{0.5cm}
\begin{tikzpicture}[scale=.5]
\begin{axis}[
width=5.5cm, height=4.5cm,
xlabel={$x$}, ylabel={$t$}, zlabel={$u$},
xlabel style={font=\huge},
ylabel style={font=\huge},
zlabel style={rotate=-90, font=\huge},
title style={font=\huge},
domain=0:12.56, y domain=0:12.56,
samples=20,
view={120}{30},
colormap/hot,
xtick={0,6.28,12.56},
xticklabels={$0$,$2\pi$,$4\pi$},
ytick={0,6.28,12.56},
yticklabels={$0$,$2\pi$},
title={TW (3D)}
]
\addplot3[surf, shader=interp] 
{cos(deg(x+y))};
\end{axis}
\end{tikzpicture}
\hspace{0.1cm}
\begin{tikzpicture}[scale=.5]
\begin{axis}[
width=5.5cm, height=4.5cm,
xlabel={$x$}, ylabel={$t$}, zlabel={$u$},
xlabel style={font=\huge},
ylabel style={font=\huge},
zlabel style={rotate=-90, font=\huge},
title style={font=\huge},
domain=0:12.56, y domain=0:12.56,
samples=20,
view={120}{30},
colormap/hot,
xtick={0,6.28,12.56},
xticklabels={$0$,$2\pi$,$4\pi$},
ytick={0,6.28,12.56},
yticklabels={$0$,$2\pi$},
title={SW (3D)}
]
\addplot3[surf, shader=interp] 
{cos(deg(x))*cos(deg(y))};
\end{axis}
\end{tikzpicture}
\caption{Schematic plots of the two bifurcating wave types.}
\label{fig:tw_sw}
\end{figure}

The dynamic behavior of the reduced system~\eqref{reduced_system}
is summarized in the following proposition.
\begin{proposition}[{\cite{crawford1991symmetry}}]\label{prop:classification}
The bifurcation structure
of~\eqref{reduced_system} is classified by the signs of $\zetaR$, $\xiR$, $\zetaR+\xiR$, 
$\zetaR-\xiR$, and the sign of $\Re\beta_{m_c,1}(\lambda)$ as given in Figure~\ref{fig:bif}, Figure~\ref{fig:phase} and Table~\ref{table:signs}.
\end{proposition}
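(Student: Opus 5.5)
Since the proposition is attributed to \cite{crawford1991symmetry}, the plan is to reduce the cubic truncation of~\eqref{reduced_system} to a planar amplitude system and read off equilibria and stability directly. Persistence under the $O(\abs{\lambda-\lambda_c}\abs{Z}^3+\abs{Z}^5)$ remainder then follows from hyperbolicity of the resulting equilibria, using the standard equivariant Hopf argument.

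\textbf{Step 1: amplitude system.} Write $Z_j=r_je^{i\phi_j}$ and $\mu:=\Re\beta_{m_c,1}(\lambda)=\Re\beta_{m_c,2}(\lambda)$, which are equal by Theorem~\ref{eigenvalue_theorem}. The equivariant structure of~\eqref{reduced_system} makes the amplitude equations decouple from the phases:
\[
\dot r_1=r_1\bigl(\mu+\zetaR r_1^2+\xiR r_2^2\bigr),\qquad
\dot r_2=r_2\bigl(\mu+\xiR r_1^2+\zetaR r_2^2\bigr).
\]
The phases rotate at $\Im\beta_{m_c,1}+O(r^2)$, so periodic orbits of the full system correspond to equilibria of this planar system.

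\textbf{Step 2: equilibria on the invariant axes and diagonal.} The equilibria are:
\begin{itemize}
\item the origin;
\item TW, with $r_1^2=-\mu/\zetaR$ and $r_2=0$, together with its reflection, which lies on the other axis;
\item SW, with $r_1=r_2$ and $r_1^2=-\mu/(\zetaR+\xiR)$.
\end{itemize}
No other equilibria exist generically, because $\zetaR\ne\xiR$ makes the off-diagonal interior system degenerate only when $r_1=r_2$. These branches exist exactly when $\mu\zetaR<0$ and $\mu(\zetaR+\xiR)<0$ respectively, i.e.\ they are supercritical or subcritical according to those signs.

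\textbf{Step 3: linear stability.} Compute the Jacobians:
\begin{itemize}
\item At the origin, stability is given by the sign of $\mu$.
\item At TW, the radial eigenvalue is $2\zetaR r_1^2$ and the transverse eigenvalue is $\mu+\xiR r_1^2=\mu(\zetaR-\xiR)/\zetaR$. Hence TW is stable iff $\zetaR<0$ and $\xiR<\zetaR$.
\item At SW, the eigenvalues are $2(\zetaR+\xiR)r^2$ along the diagonal and $2(\zetaR-\xiR)r^2$ transversally. Hence SW is stable iff $\zetaR+\xiR<0$ and $\zetaR>\xiR$.
\end{itemize}
In the full four-dimensional system the remaining directions are neutral phase directions generated by the $O(2)$ group orbit. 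Orbital stability therefore coincides with the planar stability. Enumerating the sign patterns of $\zetaR$, $\xiR$, $\zetaR\pm\xiR$ and $\mu$ yields exactly the regions in Table~\ref{table:signs} and Figures~\ref{fig:bif}--\ref{fig:phase}.

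\textbf{Step 4: robustness to higher-order terms.} The main technical point is to show that these hyperbolic (relative) equilibria persist for the full normal form. Rescale $Z=\sqrt{\abs{\mu}}\,W$, so the remainder becomes $O(\abs{\mu}^{3/2})$ relative to the $O(\abs{\mu})$ cubic terms; here $\abs{\lambda-\lambda_c}\sim\abs{\mu}$ by the transversality in Lemma~\ref{lem:PES}. Then apply the implicit function theorem within the fixed-point subspaces $\mathrm{Fix}(\text{TW isotropy})$ and $\mathrm{Fix}(R)$, as in the equivariant Hopf theorem. The nondegeneracy conditions $\zetaR\ne0$, $\zetaR+\xiR\ne0$ and $\zetaR\ne\xiR$ guarantee that the equilibria stay hyperbolic.
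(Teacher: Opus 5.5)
Your Steps~1--3 follow the same route as the paper's proof in Appendix~\ref{sec:stability_analysis}: polar amplitude equations, then the Jacobians at TW and SW. Your TW analysis is correct, and so are the SW eigenvalues $2(\zetaR+\xiR)r^2$ and $2(\zetaR-\xiR)r^2$. However, the conclusion you draw from the second SW eigenvalue is reversed. The condition $2(\zetaR-\xiR)r^2<0$ means $\zetaR<\xiR$. So SW is stable iff $\zetaR+\xiR<0$ and $\zetaR-\xiR<0$, i.e.\ $\zetaR<-\abs{\xiR}$ (Region~IV). This is exactly what the paper's condition $\det J(\mathrm{SW})=4(\zetaR-\xiR)(\Re\beta_1)^2/(\zetaR+\xiR)>0$ encodes.

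Your stated criterion $\{\zetaR+\xiR<0,\ \zetaR>\xiR\}$ is instead the wedge $\xiR<-\abs{\zetaR}$, i.e.\ Regions~V and~VI, where SW is in fact a saddle. With it, Region~V would have both TW and SW stable and Region~IV neither. So your final claim that the enumeration reproduces Table~\ref{table:signs} fails as written. With the sign fixed, the TW-stable and SW-stable sets are the two halves of $\{\zetaR<0,\ \zetaR+\xiR<0\}$ cut by the line $\zetaR=\xiR$, and the enumeration goes through. In Step~2, the sentence about the ``off-diagonal interior system'' does not parse; the clean reason interior equilibria lie on the diagonal is that subtracting the two bracket equations gives $(\zetaR-\xiR)(r_1^2-r_2^2)=0$.

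Step~4 goes beyond the paper. There, Table~\ref{table:signs} refers to the cubic truncation, and persistence under the remainder is only cited. The equivariant Hopf argument you sketch is the standard one. Two details in your Step~3 remark on the remaining directions are imprecise:
\begin{itemize}
\item At TW the transverse direction is the whole $Z_2$-plane, since polar coordinates are singular at $r_2=0$. It contributes a complex pair with real part $\Re\beta_1(\zetaR-\xiR)/\zetaR$, not a neutral phase.
\item At SW the two neutral directions are spatial translation and temporal phase. They form an $O(2)\times S^1$ orbit, not the $O(2)$ orbit alone.
\end{itemize}
Neither point changes the stability conclusions.
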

For completeness, a short proof is given in Appendix~\ref{sec:stability_analysis}; see also \cite[Section~4]{crawford1991symmetry}.
In Table~\ref{table:signs}, existence and stability refer to the cubic truncation of~\eqref{reduced_system}; stability is understood orbitally, modulo temporal phase and the $O(2)$ action.
When $\zetaR\ne0$ and $\zetaR\pm\xiR\ne0$, the corresponding equilibria of the truncated amplitude equations are hyperbolic in the radial variables, so both branches and their stability types persist under the higher-order remainder \cite{vangils1986hopf,crawford1991symmetry}.
Stability on the center manifold then implies orbital stability for~\eqref{main}, by the local attractivity of the center manifold afforded by the spectral gap \cite{haragus2010local,ptd}.

\begin{figure}[ht]
\centering
\begin{tikzpicture}[
  scale=0.60,
  thick,
  >=Stealth,
  font=\small,
  stable/.style   = {solid,  line width=1.6pt},
  unstable/.style = {dashed, line width=1.2pt, dash pattern=on 5pt off 3pt},
  saddle/.style   = {dotted, line width=1.2pt},
]


\draw[gray!60, dotted, ->] (-5.6,0) -- (5.6,0) node[black, right] {$\zetaR$};
\draw[->] (0,-5.6) -- (0,5.6) node[black, above] {$\xiR$};

\draw[gray!80, line width=0.8pt] (-5, 5) -- (5,-5)
      node[black, right, yshift= 2pt, font=\footnotesize] {$\zetaR+\xiR=0$};
\draw[gray!80, line width=0.8pt] (-5,-5) -- (5, 5)
      node[black, right, font=\footnotesize] {$\zetaR-\xiR=0$};

\foreach \lbl/\x/\y in {I/1.5/0, II/1/2, III/-1/2,
                          IV/-1.5/0, V/-1/-2, VI/1/-2}{
  \node at (\x,\y) {\lbl};
}


\begin{scope}[shift={(4.25, -0.25)}]
  \draw[gray!70] (-0.75,0) -- (0.75,0);
  \draw[unstable,   swred]  (0,0) arc[start angle=5,   end angle=90, radius=1.1]
        node[right, yshift=6pt, text=swred, font=\scriptsize] {SW};
  \draw[saddle, twblue] (0,0) arc[start angle=35,  end angle=80, radius=1.7]
        node[left,  xshift=2pt, text=twblue, font=\scriptsize] {TW};
\end{scope}

\begin{scope}[shift={(2.5, 3.7)}]
  \draw[gray!70] (-0.75,0) -- (0.75,0);
  \draw[saddle,   swred]  (0,0) arc[start angle=5,   end angle=90, radius=1.1]
        node[right, yshift=6pt, text=swred, font=\scriptsize] {SW};
  \draw[unstable, twblue] (0,0) arc[start angle=35,  end angle=80, radius=1.7]
        node[left,  xshift=2pt, text=twblue, font=\scriptsize] {TW};
\end{scope}

\begin{scope}[shift={(-2.0, 3.7)}]
  \draw[gray!70] (-0.75,0) -- (0.75,0);
  \draw[saddle, swred]  (0,0) arc[start angle=5,   end angle=90, radius=1.1]
        node[right, xshift=-8pt, yshift=6pt, text=swred, font=\scriptsize] {SW};
  \draw[saddle, twblue] (0,0) arc[start angle=175, end angle=90, radius=1.1]
        node[left, xshift=8pt,  yshift=6pt, text=twblue, font=\scriptsize] {TW};
\end{scope}

\begin{scope}[shift={(-4.25, -0.25)}]
  \draw[gray!70] (-0.75,0) -- (0.75,0);
  \draw[stable,  swred]  (0,0) arc[start angle=175, end angle=90, radius=1.1]
        node[above, text=swred, font=\scriptsize] {SW};
  \draw[saddle,  twblue] (0,0) arc[start angle=145, end angle=100, radius=1.7]
        node[right, xshift=-2pt, text=twblue, font=\scriptsize] {TW};
\end{scope}

\begin{scope}[shift={(-2.5, -4.5)}]
  \draw[gray!70] (-0.75,0) -- (0.75,0);
  \draw[saddle, swred]  (0,0) arc[start angle=175, end angle=90, radius=1.1]
        node[above, text=swred, font=\scriptsize] {SW};
  \draw[stable, twblue] (0,0) arc[start angle=145, end angle=100, radius=1.7]
        node[right, xshift=-2pt, text=twblue, font=\scriptsize] {TW};
\end{scope}

\begin{scope}[shift={(1.5, -4.5)}]
  \draw[gray!70] (-0.75,0) -- (0.75,0);
  \draw[saddle, twblue] (0,0) arc[start angle=5,   end angle=90, radius=1.1]
        node[right, xshift=-8pt, yshift=6pt, text=twblue, font=\scriptsize] {TW};
  \draw[saddle, swred]  (0,0) arc[start angle=175, end angle=90, radius=1.1]
        node[left, xshift=8pt,  yshift=6pt, text=swred,  font=\scriptsize] {SW};
\end{scope}

\begin{scope}[shift={(7.8,0.5)}]

  \draw[stable]   (0,1.8) -- (1.6,1.8);

  \node[right] at (2.2,1.8) {stable};

  \draw[unstable] (0,0.8) -- (1.6,0.8);

  \node[right] at (2.2,0.8) {unstable};

  \draw[saddle]   (0,-0.2) -- (1.6,-0.2);

  \node[right] at (2.2,-0.2) {saddle};

\end{scope}
\end{tikzpicture}
\caption{Classification of the $O(2)$-Hopf bifurcation scenarios in the $(\zetaR,\xiR)$-plane.
Branches to the right (resp.\ left) of the origin correspond to $\Re\beta_{m_c,1} > 0$ (resp.\ $< 0$).
\label{fig:bif}}
\end{figure}

\begin{table}[ht]
\centering
\renewcommand{\arraystretch}{0.75}
\setlength{\tabcolsep}{8pt}
\begin{tabular}{
  >{\centering\arraybackslash}p{1.5cm}   
  >{\raggedright\arraybackslash}p{5.0cm} 
  c                                       
  c                                       
  c                                       
}
\toprule
\textbf{Region} & \textbf{Conditions} & $\Re\beta_{m_c,1}$ & {\textbf{TW}} & {\textbf{SW}} \\
\midrule
\textbf{I}   & $\zetaR+\xiR > 0,\ \zetaR-\xiR > 0$ & $-$ & {saddle}   & {unstable} \\
\addlinespace
\textbf{II}  & $\zetaR > 0,\ \zetaR-\xiR < 0$   & $-$ & {unstable} & {saddle} \\
\addlinespace
\multirow{2}{*}{\textbf{III}} & \multirow{2}{4.0cm}{$\zetaR < 0,\ \zetaR+\xiR > 0$}
  & $-$ & dne              & {saddle} \\
  & & $+$ & {saddle} & dne              \\
\addlinespace
\textbf{IV}  & $\zetaR+\xiR < 0,\ \zetaR-\xiR < 0$ & $+$ & {saddle}   & {stable} \\
\addlinespace
\textbf{V}   & $\zetaR < 0,\ \zetaR-\xiR > 0$   & $+$ & {stable}   & {saddle} \\
\addlinespace
\multirow{2}{*}{\textbf{VI}} & \multirow{2}{4.0cm}{$\zetaR > 0,\ \zetaR+\xiR < 0$}
  & $+$ & dne              & {saddle} \\
  & & $-$ & {saddle} & dne              \\
\bottomrule
\end{tabular}
\caption{Stability of TW and SW solutions depending on the sign of $\Re\beta_{m_c,1}$. dne = does not exist.}
\label{table:signs}
\end{table}

\begin{figure}[tb]
	\centering
	\tikzset{->-/.style={decoration={
				markings,
				mark=at position #1 with {\arrow[scale=1]{latex}}},postaction={decorate}}}
	\tikzset{-<-/.style={decoration={
				markings,
				mark=at position #1 with {\arrowreversed[scale=1]{latex}}},postaction={decorate}}}
	\subcaptionbox{Region I}[.2\textwidth]{
		\begin{tikzpicture}[scale=1]
		\node[circle,fill=white, draw, inner sep=0pt,minimum size=3pt,label=below:{\tiny $TW$}] (E1) at (1,0) {};
		\node[circle,fill=white, draw, inner sep=0pt,minimum size=3pt,label=left:{\tiny $TW$}] (E2) at (0,1) {};
		\node[circle,fill=white,draw, inner sep=0pt,minimum size=3pt,label=above:{\tiny $SW$}] (E3) at (.7,.7) {};
		\draw[-<-=.5] (0,0) to (E2);
		\draw[-<-=.2] (0,1.5) to (E2);
		\draw[-<-=.5] (0,0) to (E1);
		\draw[-<-=.2] (1.5,0) to (E1);
		\draw[-<-=.5] (0,0) to (E3);
		\draw[-<-=.2] (1.3,1.3) to (E3);
		\draw[->-=.7] (E3) to [out=150,in=-10] (E2);
		\draw[->-=.7] (E3) to [out=320,in=90] (E1);
		\node[circle,fill=black, draw, inner sep=0pt,minimum size=3pt,label=below left:{\tiny O}] (E0) at (0,0) {};
		\end{tikzpicture}
	}
	\hspace{2mm}
	\subcaptionbox{Region II}[.2\textwidth]{
		\begin{tikzpicture}[scale=1]
		\node[circle,fill=white, draw,inner sep=0pt,minimum size=3pt,label=below:{\tiny $TW$}] (E1) at (1,0) {};
		\node[circle,fill=white, draw, inner sep=0pt,minimum size=3pt,label=left:{\tiny $TW$}] (E2) at (0,1) {};
		\node[circle,fill=white,draw, inner sep=0pt,minimum size=3pt,label=above:{\tiny $SW$}] (E3) at (.7,.7) {};
		\draw[-<-=.5] (0,0) to (E2);
		\draw[-<-=.2] (0,1.5) to (E2);
		\draw[-<-=.5] (0,0) to (E1);
		\draw[-<-=.2] (1.5,0) to (E1);
		\draw[-<-=.5] (0,0) to (E3);
		\draw[-<-=.2] (1.3,1.3) to (E3);
		\draw[-<-=.4] (E3) to [out=150,in=-10] (E2);
		\draw[-<-=.2] (E3) to [out=320,in=90] (E1);
		\draw[-<-=.7] (0,0) to [out=30,in=150] (E1);
		\draw[-<-=.7] (0,0) to [out=60,in=-60] (E2);
		\draw[-<-=.5] (1.5,1) to [out=200,in=90] (E1);
		\draw[-<-=.5] (.5,1.5) to [out=240,in=0] (E2);
		\node[circle,fill=black, draw, inner sep=0pt,minimum size=3pt,label=below left:{\tiny O}] (E0) at (0,0) {};
		\end{tikzpicture}
	}
	\hspace{2mm}
	\subcaptionbox{Region III(A)}[.2\textwidth]{
		\begin{tikzpicture}[scale=1]
		\node[circle,fill=white,draw, inner sep=0pt,minimum size=3pt,label=above:{\tiny $SW$}] (E3) at (.7,.7) {};
		\draw[-<-=.5] (0,0) to (0, 1.5);
		\draw[->-=.5] (1.5,0) to (0,0);
		\draw[-<-=.5] (0,0) to (E3);
		\draw[-<-=.2] (1.3,1.3) to (E3);
		\draw[->-=.5] (.2, 1.5) to [out=270,in=170] (E3);
		\draw[->-=.7] (1.5,.2) to [out=120,in=350] (E3);
		\node[circle,fill=black, draw, inner sep=0pt,minimum size=3pt,label=below left:{\tiny O}] (E0) at (0,0) {};
		\end{tikzpicture}
	}
	\hspace{2mm}
	\subcaptionbox{Region III(B)}[.2\textwidth]{
		\begin{tikzpicture}[scale=1]
		\node[circle,fill=white, draw,inner sep=0pt,minimum size=3pt,label=below:{\tiny $TW$}] (E1) at (1,0) {};
		\node[circle,fill=white, draw, inner sep=0pt,minimum size=3pt,label=left:{\tiny $TW$}] (E2) at (0,1) {};
		\draw[->-=.7] (0,0) to (E2);
		\draw[->-=.5] (0,1.5) to (E2);
		\draw[->-=.7] (0,0) to (E1);
		\draw[->-=.5] (1.5,0) to (E1);
		\draw[-<-=.5] (1.5,0.75) to [out=200,in=90] (E1);
		\draw[-<-=.5] (0.75,1.5) to [out=240,in=0] (E2);
		\node[circle,fill=white, draw, inner sep=0pt,minimum size=3pt,label=below left:{\tiny O}] (E0) at (0,0) {};
		\end{tikzpicture}
	}
	\par\bigskip
	\subcaptionbox{Region IV}[.2\textwidth]{
		\begin{tikzpicture}[scale=1]
		\node[circle,fill=white, draw, inner sep=0pt,minimum size=3pt,label=below:{\tiny $TW$}] (E1) at (1,0) {};
		\node[circle,fill=white, draw, inner sep=0pt,minimum size=3pt,label=left:{\tiny $TW$}] (E2) at (0,1) {};
		\node[circle,fill=black, inner sep=0pt,minimum size=3pt,label=above:{\tiny $SW$}] (E3) at (.7,.7) {};
		\draw[->-=.5] (0,0) to (E2);
		\draw[->-=.5] (0,1.5) to (E2);
		\draw[->-=.5] (0,0) to (E1);
		\draw[->-=.5] (1.5,0) to (E1);
		\draw[->-=.5] (0,0) to (E3);
		\draw[->-=.5] (1.3,1.3) to (E3);
		\draw[-<-=.5] (E3) to [out=150,in=-10] (E2);
		\draw[-<-=.5] (E3) to [out=320,in=90] (E1);
		\draw[->-=.7] (0,0) to [out=30,in=270] (E3);
		\draw[->-=.7] (0,0) to [out=60,in=180] (E3);
		\draw[->-=.5] (1.5,1) to [out=240,in=320] (E3);
		\node[circle,fill=white, draw, inner sep=0pt,minimum size=3pt,label=below left:{\tiny O}] (E0) at (0,0) {};
		\end{tikzpicture}
	}
	\hspace{2mm}
	\subcaptionbox{Region V}[.2\textwidth]{
		\begin{tikzpicture}[scale=1]
		\node[circle,fill=black,inner sep=0pt,minimum size=3pt,label=below:{\tiny $TW$}] (E1) at (1,0) {};
		\node[circle,fill=black,inner sep=0pt,minimum size=3pt,label=left:{\tiny $TW$}] (E2) at (0,1) {};
		\node[circle,fill=white,draw, inner sep=0pt,minimum size=3pt,label=above:{\tiny $SW$}] (E3) at (.7,.7) {};
		\draw[->-=.5] (0,0) to (E2);
		\draw[->-=.5] (0,1.5) to (E2);
		\draw[->-=.5] (0,0) to (E1);
		\draw[->-=.5] (1.5,0) to (E1);
		\draw[->-=.5] (0,0) to (E3);
		\draw[->-=.5] (1.3,1.3) to (E3);
		\draw[->-=.7] (E3) to [out=150,in=-10] (E2);
		\draw[->-=.5] (E3) to [out=320,in=90] (E1);
		\draw[->-=.7] (0,0) to [out=30,in=150] (E1);
		\draw[->-=.7] (0,0) to [out=60,in=-60] (E2);
		\draw[->-=.5] (1.5,1) to [out=200,in=90] (E1);
		\draw[->-=.5] (.5,1.5) to [out=240,in=0] (E2);
		\node[circle,fill=white, draw, inner sep=0pt,minimum size=3pt,label=below left:{\tiny O}] (E0) at (0,0) {};
		\end{tikzpicture}
	}
	\hspace{2mm}
	\subcaptionbox{Region VI(A)}[.2\textwidth]{
		\begin{tikzpicture}[scale=1]
		\node[circle,fill=white,draw, inner sep=0pt,minimum size=3pt,label=above:{\tiny $SW$}] (E3) at (.7,.7) {};
		\draw[->-=.5] (0,0) to (0, 1.5);
		\draw[-<-=.5] (1.5,0) to (0,0);
		\draw[->-=.7] (0,0) to (E3);
		\draw[->-=.5] (1.3,1.3) to (E3);
		\draw[-<-=.3] (.2, 1.5) to [out=270,in=170] (E3);
		\draw[-<-=.4] (1.5,.2) to [out=120,in=350] (E3);
		\node[circle,fill=white, draw, inner sep=0pt,minimum size=3pt,label=below left:{\tiny O}] (E0) at (0,0) {};
		\end{tikzpicture}
	}
	\hspace{2mm}
	\subcaptionbox{Region VI(B)}[.2\textwidth]{
		\begin{tikzpicture}[scale=1]
		\node[circle,fill=white, draw,inner sep=0pt,minimum size=3pt,label=below:{\tiny $TW$}] (E1) at (1,0) {};
		\node[circle,fill=white, draw, inner sep=0pt,minimum size=3pt,label=left:{\tiny $TW$}] (E2) at (0,1) {};
		\draw[-<-=.5] (0,0) to (E2);
		\draw[-<-=.2] (0,1.5) to (E2);
		\draw[-<-=.5] (0,0) to (E1);
		\draw[-<-=.2] (1.5,0) to (E1);
		\draw[->-=.5] (1.5,0.75) to [out=200,in=90] (E1);
		\draw[->-=.5] (0.75,1.5) to [out=240,in=0] (E2);
		\node[circle,fill=black, draw, inner sep=0pt,minimum size=3pt,label=below left:{\tiny O}] (E0) at (0,0) {};
		\end{tikzpicture}
	}
\caption{Phase portraits in the $(|Z_1|,|Z_2|)$ quadrant for each region of Table~\ref{table:signs}.
Filled nodes: stable; open nodes: unstable or saddle.
Sub-cases (A), (B) in Regions III and VI correspond to the sign of $\Re\beta_{m_c,1}$.
\label{fig:phase}}
\end{figure}

\begin{remark}\label{rmk:transition_type}
The system undergoes a dynamic transition at $\lambda = \lambda_c$ from the trivial steady state to an attractor in the sense of \cite{ptd}.
Assuming $\zetaR \pm \xiR \neq 0$, the transition is of Type-I (continuous) in Regions~IV and~V, and of Type-II (jump) otherwise; the degenerate boundary cases require quintic normal form terms and are classified in \cite{crawford1988degenerate}.
In the Type-I case, the attractor-bifurcation theorem applies to the four-real-dimensional center dynamics and yields a local bifurcated attractor with the homology of $S^3$ \cite{mawang2005bifurcation,ptd}.
Indeed, for $\zetaR < 0$ and $\zetaR + \xiR < 0$, at $\lambda = \lambda_c$ we have, up to the $O(\abs{Z}^6)$ contribution of the remainder in~\eqref{reduced_system},
\[
\frac{d}{dt}\abs{Z}^2
= 2\,\zetaR\,\bigl(|Z_1|^4 + |Z_2|^4\bigr) + 4\,\xiR\,|Z_1|^2|Z_2|^2 \le c \abs{Z}^4,
\]
for some $c<0$ and all sufficiently small $(Z_1,Z_2) \neq 0$. 
\end{remark}

\section{Explicit Coefficient Formulas}\label{sec:formulas}
The self-interaction coefficients are given by
\begin{equation}\label{s_hat_k}
	\begin{aligned}
	\hat{s}_{1,1} &= s_{1,1} u_{m_c,1} \abs{u_{m_c,1}}^2 + s_{1,2} u_{m_c,1} \abs{v_{m_c,1}}^2 + s_{1,3} v_{m_c,1} \abs{u_{m_c,1}}^2 \\
	&\quad + s_{1,4} v_{m_c,1} \abs{v_{m_c,1}}^2 + s_{1,5} \overline{u_{m_c,1}}v_{m_c,1}^2 + s_{1,6} u_{m_c,1}^2 \overline{v_{m_c,1}}, \\[4pt]
	\hat{s}_{1,2} &= 2 s_{1,1} u_{m_c,1} \abs{u_{m_c,2}}^2 + s_{1,2} \overline{v_{m_c,2}} ( u_{m_c,1} v_{m_c,2} + u_{m_c,2} v_{m_c,1}) \\
	&\quad + s_{1,3} \overline{u_{m_c,2}} ( v_{m_c,1} u_{m_c,2} + v_{m_c,2} u_{m_c,1}) + 2 s_{1,4} v_{m_c,1} \abs{v_{m_c,2}}^2 \\
	&\quad + 2 s_{1,5} \overline{u_{m_c,2}} v_{m_c,1} v_{m_c,2} + 2 s_{1,6} u_{m_c,1} u_{m_c,2} \overline{v_{m_c,2}}.
	\end{aligned}
\end{equation}
The scalar coefficients $s_{n,k}$ ($n=1,2$, $k=1,\ldots,6$) are defined as follows. 
\begin{equation}\label{s_k_coeff}
	\begin{alignedat}{3}
		s_{n,1} &= \sum_{\substack{|\alpha_1|=3 \\ |\alpha_2|=0}} \sigma_{m_c,n}(\alpha_1,\alpha_2)\,(\mathfrak{m}_e(\alpha_1) - \mathfrak{m}_o(\alpha_1)), \quad &
		s_{n,2} &= \sum_{\substack{|\alpha_1|=1 \\ |\alpha_2|=2}} \sigma_{m_c,n}(\alpha_1,\alpha_2)\,(\mathfrak{m}_e(\alpha_2) - \mathfrak{m}_o(\alpha_2)), \\[4pt]
		s_{n,3} &= \sum_{\substack{|\alpha_1|=2 \\ |\alpha_2|=1}} \sigma_{m_c,n}(\alpha_1,\alpha_2)\,(\mathfrak{m}_e(\alpha_1) - \mathfrak{m}_o(\alpha_1)), \quad &
		s_{n,4} &= \sum_{\substack{|\alpha_1|=0 \\ |\alpha_2|=3}} \sigma_{m_c,n}(\alpha_1,\alpha_2)\,(\mathfrak{m}_e(\alpha_2) - \mathfrak{m}_o(\alpha_2)), \\[4pt]
		s_{n,5} &= \sum_{\substack{|\alpha_1|=1 \\ |\alpha_2|=2}} \sigma_{m_c,n}(\alpha_1,\alpha_2)\,(\mathfrak{m}_e(\alpha_1) - \mathfrak{m}_o(\alpha_1)), \quad &
		s_{n,6} &= \sum_{\substack{|\alpha_1|=2 \\ |\alpha_2|=1}} \sigma_{m_c,n}(\alpha_1,\alpha_2)\,(\mathfrak{m}_e(\alpha_2) - \mathfrak{m}_o(\alpha_2)).
	\end{alignedat}
\end{equation}
Here
\begin{equation}\label{sigma_n}
\sigma_{m,n}(\alpha_1,\alpha_2) := 2\pi\,
\langle \mathbf{a}_{\alpha_1,\alpha_2}, \mathbf{q}_{m,n}^* \rangle_{\mathbb{C}^2}\,
(im_c)^{\mathfrak{m}_t(\alpha_1+\alpha_2)}.
\end{equation}
The first index $m$ in $\sigma_{m,n}$ selects the adjoint eigenvector used in the projection; the derivative factor remains $im_c$ because every factor being multiplied is a critical mode.

For the quadratic terms, define the polarization
\begin{equation}\label{Q_AB}
\mathcal{Q}_{\alpha_1,\alpha_2}
= \abs{\alpha_1}\, u_{m_c,1}\, u_{m_c,2}^{\abs{\alpha_1}-1}\, v_{m_c,2}^{\abs{\alpha_2}}
+ \abs{\alpha_2}\, u_{m_c,2}^{\abs{\alpha_1}}\, v_{m_c,1}\, v_{m_c,2}^{\abs{\alpha_2}-1},
\end{equation}
with the convention $0\cdot z^{-1}=0$ (so only the summand with $|\alpha_k|\ge1$ contributes).
The coefficients $C_{U,i,n}$ and $C_{V,i,n}$ are the following sums over quadratic multi-indices:
\begin{equation}\label{CUV}
\begin{aligned}
C_{U,1,n} &= \sum_{\abs{\alpha_1}=2,\,\abs{\alpha_2}=0} A_{\alpha_1,\alpha_2,n,1}, \qquad
C_{U,2,n} = \sum_{\abs{\alpha_1}=1,\,\abs{\alpha_2}=1} A_{\alpha_1,\alpha_2,n,1}, \\
C_{V,1,n} &= \sum_{\abs{\alpha_1}=1,\,\abs{\alpha_2}=1} A_{\alpha_1,\alpha_2,n,2}, \qquad
C_{V,2,n} = \sum_{\abs{\alpha_1}=0,\,\abs{\alpha_2}=2} A_{\alpha_1,\alpha_2,n,2}.
\end{aligned}
\end{equation}

\begin{equation}\label{ABalpha}
A_{\alpha_1,\alpha_2,n,j} := (-1)^{\mathfrak{m}_o(\alpha_1+ \alpha_2)}\,
\sigma_{m_c, n}(\alpha_1,\alpha_2)\,\sum_{p\ge 0}(-2)^{p}\,\alpha_{j,p}, \qquad j=1,2.
\end{equation}

We first state the cross-interaction coefficients in a form valid whether or not the second-harmonic matrix $M_{2m_c}$ is diagonalizable.  Set $\beta_1:=\beta_{m_c,1}$ and
\begin{equation}\label{eq:g_vectors}
\begin{aligned}
\widehat{\mathbf g}_{AA}
&:=\sum_{|\alpha_1|+|\alpha_2|=2}\mathbf a_{\alpha_1,\alpha_2}\,
u_{m_c,1}^{|\alpha_1|}v_{m_c,1}^{|\alpha_2|}
\,(im_c)^{\mathfrak m_t(\alpha_1+\alpha_2)},\\
\widehat{\mathbf g}_{AB}
&:=\sum_{|\alpha_1|+|\alpha_2|=2}\mathbf a_{\alpha_1,\alpha_2}\,
\mathcal Q_{\alpha_1,\alpha_2}
\,(im_c)^{\mathfrak m_t(\alpha_1+\alpha_2)},
\end{aligned}
\end{equation}
and define the second-harmonic correction vectors
\begin{equation}\label{eq:Phi_vectors}
\mathbf h_{AA}:=(2\beta_1I_2-M_{2m_c})^{-1}\widehat{\mathbf g}_{AA},
\qquad
\mathbf h_{AB}:=-M_{2m_c}^{-1}\widehat{\mathbf g}_{AB}.
\end{equation}
Both inverses exist by~\eqref{PES}.  For $j=1,2$, introduce the row vector
\begin{equation}\label{eq:ell_vectors}
\boldsymbol\ell_{n,j}:=
\begin{pmatrix}
C_{U,1,n}\overline{u_{m_c,j}}+C_{U,2,n}\overline{v_{m_c,j}},&
C_{V,1,n}\overline{u_{m_c,j}}+C_{V,2,n}\overline{v_{m_c,j}}
\end{pmatrix}.
\end{equation}
Then the universally valid matrix formulas are
\begin{equation}\label{eq:c_hat_matrix}
\widehat c_{n,1}=\boldsymbol\ell_{n,1}\mathbf h_{AA},
\qquad
\widehat c_{n,2}=\boldsymbol\ell_{n,2}\mathbf h_{AB}.
\end{equation}

For comparison with traditional modal calculations, suppose now that $M_{2m_c}$ has two distinct eigenvalues and choose its right and adjoint eigenvectors as in Lemma~\ref{lem:adjoint_eigen}.  Expanding the two resolvents in this eigenbasis turns~\eqref{eq:c_hat_matrix} into
\begin{equation}\label{c_hat_k}
\begin{aligned}
\hat{c}_{n,1} &= \sum_{k=1}^2 \Phi_{AA,2m_c,k} \left[ \left( C_{U,1,n}\,u_{2m_c,k} + C_{V,1,n}\, v_{2m_c,k} \right)\overline{u_{m_c,1}} + \left( C_{U,2,n}\, u_{2m_c,k} + C_{V,2,n}\, v_{2m_c,k} \right)\overline{v_{m_c,1}} \right], \\
\hat{c}_{n,2} &= \sum_{k=1}^2 \Phi_{AB,2m_c,k} \left[ \left( C_{U,1,n}\,u_{2m_c,k} + C_{V,1,n}\, v_{2m_c,k} \right)\overline{u_{m_c,2}} + \left( C_{U,2,n}\, u_{2m_c,k} + C_{V,2,n}\, v_{2m_c,k} \right)\overline{v_{m_c,2}} \right],
\end{aligned}
\end{equation}
where
\begin{equation}\label{Phi_coeff}
\begin{aligned}
\Phi_{AA,2m_c,k} &= \frac{1}{2\beta_{m_c,1} - \beta_{2m_c,k}}
\sum_{\abs{\alpha_1} + \abs{\alpha_2} = 2} \sigma_{2m_c,k}(\alpha_1,\alpha_2)\,
u_{m_c,1}^{\abs{\alpha_1}}\, v_{m_c,1}^{\abs{\alpha_2}}, \\
\Phi_{AB,2m_c,k} &= \frac{1}{-\beta_{2m_c,k}}
\sum_{\abs{\alpha_1} + \abs{\alpha_2} = 2} \sigma_{2m_c,k}(\alpha_1,\alpha_2)\,
\mathcal{Q}_{\alpha_1,\alpha_2}.
\end{aligned}
\end{equation}
The denominators $2\beta_{m_c,1}-\beta_{2m_c,k}$ and $\beta_{2m_c,k}$ are nonzero by~\eqref{PES}.
At a defective second-harmonic symbol the scalar summands in~\eqref{c_hat_k}--\eqref{Phi_coeff} are not defined separately, whereas the matrix formula~\eqref{eq:c_hat_matrix} remains regular.

\begin{remark}\label{rem:scaling}
The physical interpretation of the self and cross-interaction coefficients are as follows: $\hat{s}_{1,j}$ (self-interaction) is linear in the cubic coefficients of $G$, and $\hat{c}_{1,j}$ (cross-interaction) is bilinear in the quadratic coefficients.

Thus under a uniform scaling $G\mapsto\epsilon G$, the linearity and bilinearity above give
\begin{equation}\label{eq:zeta_xi_scaling}
\zeta(\epsilon G) = \epsilon\,\hat{s}_{1,1} + \epsilon^{2}\,\hat{c}_{1,1}, \qquad
\xi(\epsilon G)  = \epsilon\,\hat{s}_{1,2} + \epsilon^{2}\,\hat{c}_{1,2}.
\end{equation}
Thus the self-interaction dominates for $\epsilon\to 0$ unless $\Re \hat{s}_{1,j}=0$ which happens automatically when $G$ is purely quadratic or when the cubic self-interaction is purely imaginary, as in Section~\ref{sec:app1}.
\end{remark}

\section{Applications to Conservative Systems}
\label{sec:app1}
In continuum mechanics, from balance of linear momentum arises a system of the form
\[
\rho_R \mathbf{y}_{tt} - \operatorname{div} D_{\mathbf{A}} W(D \mathbf{y}) - \mathbf{b} = 0.
\]
Here $\mathbf{y}$ denotes the deformation, $ D_{\mathbf{A}} W(D \mathbf{y})$ is the first Piola--Kirchhoff stress tensor, $\rho_R > 0$ is the constant density in the reference configuration and $\mathbf{b}$ is the body force. We refer the reader to \cite[p.~27]{ball2002some} for a detailed discussion.

In the scalar case, and in the absence of external forces, with $\rho_R = 1$ and $W' = \sigma$, the system reduces to
\begin{equation} \label{system1}
	\begin{aligned}
		& u_t - v_x = 0, \\
		& v_t - \partial_x \sigma(u) = 0,
	\end{aligned}
\end{equation}
with the choice $v = y_t$, $u = y_x$.
Here $u$ is the scalar deformation gradient, $v$ is the velocity.
System~\eqref{system1} is the $p$-system of one-dimensional elasticity, equivalently isentropic gas dynamics in Lagrangian coordinates, written with the stress $\sigma=-p$ and with the roles of the two unknowns interchanged relative to the convention of \cite{smoller1994shock}; hyperbolicity is the condition $\sigma'>0$.
The regularization introduced below changes only the linear part and preserves the flux of~\eqref{system1}.
We consider the following regularization to the above system, given by
\begin{equation}\label{system2}
\begin{aligned}
& \partial_t u -\partial_x v = -a \partial_x^4 u, \\
& \partial_t v -\partial_x \sigma (u) = -\delta \partial_x^2 v- \partial_x^4  v,
\end{aligned}
\end{equation}
Here $\delta$ is the bifurcation parameter playing the role of the $\lambda$ in~\eqref{main}.
We take a cubic stress--strain law 
\begin{equation}\label{sigma-cubic}
	\sigma(u) = c^2 u + \frac{1}{2} u^2 + \frac{\eta}{3} u^3,
\end{equation}
and assume
\begin{equation} \label{a_c_cond}
	0 < a < c, \quad c^2 > \frac{4a(a+1)^3}{27}.
\end{equation}
The role of the second condition is explained below.

The Hopf bifurcation of~\eqref{system2} at $\eta = 0$ was studied in \cite{yao20142} and generalized by \cite{liyao2015} to arbitrary flux laws and wavenumbers; the stationary bifurcation with double zero eigenvalue for the same class of systems is analyzed in \cite{wang2018dynamical}.

The nonlinear operator is $G(u,v) = [0,\; uu_x + \eta\,u^2 u_x]^T$ and $m_L=2$ in \eqref{Lij}; Condition~\ref{cond:D} holds, and the mean-zero restriction fixes the two neutral conserved means described in Remark~\ref{rem:meanzero-necessary}, since $M_0\equiv0$.

\subsection{Linear Analysis}
For $m \in \mathbb{Z}_{\ne 0}$, the matrix $M_m$ given by \eqref{Mm} is
\begin{equation}\label{Mmapp}
M_m= \begin{pmatrix}
-a m^4 & i m \\
c^2 i m  & \delta m^2 - m^4
\end{pmatrix}.
\end{equation}
We let 
\[
\delta_c = a + 1, \quad m_c = 1.
\]
Since $\det M_m = m^2(c^2 + a m^4 (m^2-\delta))$ and $\tr M_m = m^2(\delta - m^2 (a+1))$, the conditions of Assumption~\ref{hopf_assumption} can be checked explicitly:
\begin{enumerate}[label=(\roman*)]
	\item $\tr M_{\pm 1}(\delta_c) = 0$,
	\item For $m=1$, $\det M_1(\delta_c)=c^2-a^2>0$ by \eqref{a_c_cond}.
	For $|m|\ge2$, $\det M_m(\delta_c)=m^2\bigl(am^6-a(a+1)m^4+c^2\bigr)$, and minimizing the cubic $s\mapsto as^3-a(a+1)s^2+c^2$ ($s=m^2$) at its interior critical point $s^\ast=2(a+1)/3$ shows that under the second inequality in \eqref{a_c_cond}, $\det M_m(\delta_c)>0$ for every $m\ne0$.
	\item $\tfrac{d}{d\delta}\tr M_{\pm 1}(\delta) = 1 \ne 0$,
	\item $\tr M_m(\delta_c) < \tr M_{\pm 1}(\delta_c) = 0$ for all $m \ne \pm 1$ (see Figure~\ref{fig:trace}),
	\item $\det M_m(\delta_c)/|\tr M_m(\delta_c)| \to \infty$ as $|m|\to\infty$ follows from the leading order asymptotics $\det M_m(\delta_c) \sim a\,m^8$ and $|\tr M_m(\delta_c)| \sim (a+1)\,m^4$.
\end{enumerate}

Defining $\omega = \sqrt{c^2 - a^2}>0$, at $\delta = a+1$, we have
\[
\beta_{1,1} = i \omega, \quad \beta_{1,2} = -i \omega,
\]
and
\[
\beta_{2,1} = -2\left (3(a+1) + \sqrt{(3 - 5 a )^2 - c^2}\right ), \quad \beta_{2,2} =  -2\left(3 (a+1)-\sqrt{(3- 5 a )^2 - c^2} \right).
\]
For simple eigenvalues, we choose the eigenvectors and adjoint eigenvectors in the form
\begin{equation}\label{application_evecs}
\mathbf{q}_{m,n} = \begin{bmatrix} \dfrac{im}{a m^4 +\beta_{m,n}} \\ 1 \end{bmatrix}, \quad 
\mathbf{q}_{m,n}^* = \kappa_{m,n} \begin{bmatrix} 1 \\[2pt] \dfrac{i\,(a m^4 + \overline{\beta_{m,n}})}{c^2 m} \end{bmatrix}, \quad 
m>0, n=1,2,
\end{equation}
with normalization constants
\begin{equation}\label{kappa_app1}
\overline{\kappa_{m,n}} = \frac{1}{2\pi i} \frac{c^2 m\,(a m^4 + \beta_{m,n})}{c^2 m^2 - (a m^4 + \beta_{m,n})^2},
\end{equation}
fixed by the orthogonality condition~\eqref{orth}.
The critical eigenvectors satisfy $R\,\overline{\mathbf{q}_{1,1}} = -\mathbf{q}_{1,2}$, so the normalization of Lemma~\ref{lem:crit_norm} holds with $\gamma=-1$.
At exceptional parameter values where $M_2$ has a repeated defective eigenvalue, the second-harmonic eigenvectors in~\eqref{application_evecs}--\eqref{kappa_app1} are replaced by the matrix-resolvent formula~\eqref{eq:c_hat_matrix}; the simplified coefficients below extend regularly to those values.

\begin{figure}[H]
\centering
\begin{tikzpicture}
\begin{axis}[
scale=0.7,
transform shape,
axis lines=middle,
xlabel={$\delta$},
ylabel={$\tr$},
ytick=\empty,
xtick={2,8,18,32},
xticklabels={$2$,$8$,$18$,$32$},
legend style={at={(1.02,1)}, anchor=north west, font=\small},
grid=both,
samples=200,
domain=0:40,
xmin=0,
xmax=40,
ymin=-5,
ymax=5,
width=8cm,
height=5.5cm,
tick label style={font=\small},
label style={font=\small},
]

\addplot[black, thick, solid]
  {x - 2};
\addlegendentry{$\tr_{\pm1}$: $m=1$}

\addplot[color={rgb,1:red,0; green,0.45; blue,0.74}, thick, dashdotted]
  {4*x - 32};
\addlegendentry{$\tr_{\pm2}$: $m=2$}

\addplot[color={rgb,1:red,0.85; green,0.33; blue,0.1}, thick, dashed]
  {9*x - 162};
\addlegendentry{$\tr_{\pm3}$: $m=3$}

\addplot[color={rgb,1:red,0; green,0.5; blue,0.5}, thick, solid]
  {16*x - 512};
\addlegendentry{$\tr_{\pm4}$: $m=4$}

\end{axis}
\end{tikzpicture}
\caption{Trace of $M_m(\delta)$ in \eqref{Mmapp}, as a function of $\delta$, drawn for $a=1$.}
\label{fig:trace}
\end{figure}

\subsection{Normal Form Coefficients and Bifurcation Analysis}
The nonzero multi-index coefficients of~\eqref{system2} are
\[
\mathbf{a}_{\alpha_1, \alpha_2} =
\begin{cases}
[0 \quad 1]^T,    & \alpha_1 = \alpha_1^q,\ \alpha_2 = \alpha_2^q, \\
[0 \quad \eta]^T, & \alpha_1 = \alpha_1^c,\ \alpha_2 = \alpha_2^c, \\
[0 \quad 0]^T,    & \text{otherwise,}
\end{cases}
\]
where
\[
\alpha_1^q = (1, 1, 0, \dots), \qquad \alpha_2^q = (0, 0, 0, \dots),
\]
and
\[
\alpha_1^c = (2, 1, 0, \dots), \quad \alpha_2^c = (0, \dots).
\]
Substituting~\eqref{application_evecs} into~\eqref{sigma_n} gives
\begin{equation}\label{sigma1_app1}
\sigma_{1,n}(\alpha_1^q, \alpha_2^q)
= \frac{(-1)^{n+1} a + i\omega}{2\omega}, \qquad n=1,2,
\end{equation}
and from~\eqref{ABalpha}--\eqref{CUV},
\[
C_{U, 1, 1} = \frac{a + i\omega}{2\omega}, \qquad
C_{U, 2, 1} = C_{V, 1, 1} = C_{V, 2, 1} = 0.
\]
Away from the repeated-eigenvalue locus of $M_2$, the scalar eigenbasis formulas give
\begin{equation}\label{sigma2_app1}
\sigma_{2,k}(\alpha_1^q, \alpha_2^q)
= -\frac{i\,(16 a + \beta_{2,k})^2}{4 c^2 - (16 a + \beta_{2,k})^2}, \qquad k=1,2.
\end{equation}
From~\eqref{Phi_coeff}, using $u_{1,1}^2 = -1/(a + i\omega)^2$:
\begin{equation}\label{PhiAA_app1}
\Phi_{AA, 2, k}
= \frac{\sigma_{2,k}\,u_{1,1}^2}{2\beta_{1,1} - \beta_{2,k}}
= \frac{i\,(16 a + \beta_{2,k})^2}{(a + i\omega)^2\,\bigl(4 c^2 - (16 a + \beta_{2,k})^2\bigr)\,(2 i\omega - \beta_{2,k})}.
\end{equation}
For $\Phi_{AB, 2, k}$ the master formula~\eqref{Q_AB} for $(\abs{\alpha_1}, \abs{\alpha_2}) = (2, 0)$ gives $\mathcal{Q}_{\alpha_1^q, \alpha_2^q} = 2\,u_{1,1}\,u_{1,2} = -2/c^2$, hence
\begin{equation}\label{PhiAB_app1}
\Phi_{AB, 2, k}
= -\frac{1}{\beta_{2,k}}\,\sigma_{2,k}(\alpha_1^q, \alpha_2^q) 2\,u_{1,1}\,u_{1,2}
= -\frac{2 i\,(16 a + \beta_{2,k})^2}{c^2\,\beta_{2,k}\,\bigl(4 c^2 - (16 a + \beta_{2,k})^2\bigr)}.
\end{equation}
Substituting into~\eqref{c_hat_k} with $C_{U, 1, 1}\,\overline{u_{1,1}} = -i\,(a + i\omega)^2/(2\omega c^2)$ and $u_{2,k} = 2 i/(16 a + \beta_{2,k})$:
\begin{equation}\label{c_hat_11_app1}
\begin{aligned}
\hat{c}_{1, 1}(a, c)
&= \frac{i}{c^2 \omega }\sum_{k = 1}^{2}\frac{16 a + \beta_{2,k}}{\bigl(4 c^2 - (16 a + \beta_{2,k})^2\bigr)(2 i\omega - \beta_{2,k})}\\
&= \frac{1}{12 c^2\,\bigl[2(a + 1)(a^2 - c^2) + i\omega\,a\,(16 - 5 a)\bigr]}.
\end{aligned}
\end{equation}
Similarly, $C_{U, 1, 1}\,\overline{u_{1,2}} = -i/(2\omega)$, and~\eqref{c_hat_k}~with~\eqref{PhiAB_app1} yields
\begin{equation}\label{c_hat_12_app1}
\hat{c}_{1, 2}(a, c)
= -\frac{2 i}{c^2 \omega}\sum_{k = 1}^{2}\frac{16 a + \beta_{2,k}}{\beta_{2,k}\,\bigl(4 c^2 - (16 a + \beta_{2,k})^2\bigr)}
= -\frac{i}{2\, c^2 \omega\,(c^2 - 16 a(a - 3))}.
\end{equation}
For $\eta=0$, intermediate expressions for $\mathfrak{b}$ (self) and $\mathfrak{c}$ (cross) in \cite{yao20142} satisfy $\mathfrak{b} = 4c^2\,\hat{c}_{1,1}$ and $\mathfrak{c} = 4c^2\,\hat{c}_{1,2}$, the factor $4c^2$ reflecting eigenvector normalization.
The rationalized closed form of $\mathfrak{b}$ on page~2 of \cite{yao20142} contains typographical errors: the denominator should read $(48a-15a^2)^2 + 36\,\omega^2(a+1)^2$, and the numerators carry a spurious factor of two.
This denominator is $\alpha(k_0^2)/4$ in the notation of \cite[Eq.~(4.9)]{liyao2015}, so the general formulas of \cite{liyao2015} already carry the corrected value.
The linear coefficient on the same display omits the same $\omega$: $\mathfrak{a}$ should read $\tfrac{1}{2} - i a/(2\omega)$, which is the value of $d\beta_{1,1}/d\delta$ at $\delta_c$ obtained from $\tr M_1 = \delta - (a+1)$ and $\det M_1 = c^2 + a(1-\delta)$.
Being an eigenvalue derivative, $\mathfrak{a}$ is unaffected by the eigenvector normalization above.

Li and Yao~\cite{liyao2015} generalize the model of Yao~\cite{yao20142} to arbitrary flux laws and wavenumbers, and give explicit closed forms for the cubic coefficients $b_0$ and $c_0$ of that family in their Eqs.~(4.9)--(4.11).
Specializing those to $k_0=1$ and $\sigma(u)=c^2u+\tfrac12u^2$ gives $b_0 = 4c^2\,\hat{c}_{1,1}$ and $c_0 = 4c^2\,\hat{c}_{1,2}$, with the same normalization factor as for $\mathfrak{b}$ and $\mathfrak{c}$ above; so~\eqref{c_hat_11_app1}--\eqref{c_hat_12_app1} agree with \cite{liyao2015} on the overlap.
In particular, their $\Re c_0 = 0$ is the identity $\xiR\equiv0$ of~\eqref{real_parts_app1}, and their observation that $\sigma'''(0)$ affects only the angular equations is the statement that $\hat{s}_{1,j}$ in~\eqref{s_hat_app1} is purely imaginary.
What is new here is that the same two coefficients follow from Section~\ref{sec:formulas} for any system in the class, without a model-specific derivation.

The cubic nonlinear term $\eta u^2 u_x$ determines the self-interaction coefficients. We have $\mathfrak{m}_t(\alpha_1^c) = 1$ and only $s_{n,1}$ is non-zero in~\eqref{s_k_coeff}; applying~\eqref{s_hat_k} with $|u_{1,1}|^2 = |u_{1,2}|^2 = 1/c^2$ yields
\begin{equation}\label{s_hat_app1}
\hat{s}_{1, 1}(a, c, \eta) = \frac{i\eta}{2 c^2 \omega}, \qquad
\hat{s}_{1, 2}(a, c, \eta) = \frac{i\eta}{c^2 \omega}.
\end{equation}
Finally, the normal form coefficients are given by
\begin{equation}\label{normal_form_app1}
\zeta = \hat{c}_{1, 1}(a, c) + \frac{i\eta}{2 c^2 \omega}, \qquad
\xi   = \frac{i}{c^2 \omega}\left[\eta - \frac{1}{2\bigl(c^2 - 16 a(a - 3)\bigr)}\right].
\end{equation}
Using $0<a<c$, their real parts evaluate explicitly to
\begin{equation}\label{real_parts_app1}
\zetaR = -\frac{a + 1}{6 c^2\,\bigl[a^2 (5 a - 16)^2 + 4 (a + 1)^2 (c^2 - a^2)\bigr]} < 0,
\qquad \xiR \equiv 0.
\end{equation}
The system therefore lies in Region~IV: traveling waves are saddles and standing waves are stable.
The identity $\xiR\equiv0$ holds for every admissible $(a,c,\eta)$.
By~\eqref{s_hat_app1} the cubic stress coefficient $\eta$ enters $\zeta$ and $\xi$ only through their imaginary parts, so the displayed $\zetaR$ is determined by the quadratic stress term alone.

The limit $a \to 0+$ in \eqref{system2} yields an equation whose traveling wave solutions are the same as those of the classical Kuramoto-Sivashinsky equation when $\eta = 0$ in \eqref{sigma-cubic}.
However, Assumption~\ref{hopf_assumption}(v) fails at $a = 0$, where $\det M_m(\delta_c)/|\tr M_m(\delta_c)| = c^2/(m^2-1) \to 0$, so the uniform spectral gap closes and Theorem~\ref{thm:cm_reduction} ceases to apply.
The normal form coefficients~\eqref{normal_form_app1} nevertheless extend continuously to $a = 0$, with $\zetaR \to -1/(24 c^4) < 0$ and $\xiR \equiv 0$, so the system remains in Region~IV.

\subsection{A Conservative Bilinear Family Realizing the Full Classification}\label{sec:app1_TW}

It is natural to attribute the identity $\xiR\equiv0$ of the previous application, which holds at every admissible parameter value, to the conservativity of its nonlinearity.
Conservativity alone is not sufficient.
Keeping the nonlinearity conservative and enlarging it to the general first-order bilinear family already permits all six regions of Table~\ref{table:signs}, so the cancellation in~\eqref{real_parts_app1} must reflect a finer property of the stress--strain flux.
We keep the same linear part~\eqref{system2} and take the most general conservative $O(2)$-equivariant bilinear nonlinearity whose derivatives have order at most one,
\begin{equation}\label{G_bilinear}
G(u,v)=\begin{bmatrix} g_{11}\,(u_x v + u v_x)\\[3pt] g_{21}\,u u_x + g_{22}\,v v_x \end{bmatrix},
\qquad g_{11},g_{21},g_{22}\in\mathbb{R}.
\end{equation}
The quadratic part of the stress--strain model~\eqref{sigma-cubic} is obtained when $(g_{11},g_{21},g_{22})=(0,1,0)$.

The nonzero multi-index coefficients of~\eqref{G_bilinear} are
\[
\mathbf{a}_{\alpha_1,\alpha_2} =
\begin{cases}
[\,g_{11}\ \ 0\,]^T, & (\alpha_1,\alpha_2)=\bigl((0,1,0,\dots),(1,0,\dots)\bigr)
                       \text{ and } \bigl((1,0,\dots),(0,1,0,\dots)\bigr),\\
[\,0\ \ g_{21}\,]^T, & \alpha_1=(1,1,0,\dots),\ \alpha_2=0,\\
[\,0\ \ g_{22}\,]^T, & \alpha_1=0,\ \alpha_2=(1,1,0,\dots),
\end{cases}
\]
and the linear data (the eigenvalues $\beta_{m,n}$ and eigenvectors~\eqref{application_evecs}) is unchanged.

Since the nonlinearity is purely quadratic, $\hat s_{1,j}=0$ and both normal form coefficients are given by the cross-interaction formulas of Theorem~\ref{thm:main}; the closed forms below were verified numerically with the companion package.
Writing $D_\zeta=a^{2}(5a-16)^{2}+4(a+1)^{2}(c^{2}-a^{2})$ and $D_\xi=c^{2}-16a(a-3)$, the real parts are
\begin{equation}\label{bilinear_real}
\zetaR=\frac{\mathbf g^{\!\top} N\,\mathbf g}{12\,c^{2}D_\zeta},
\qquad
\xiR=-\,\frac{a\,(g_{11}-g_{22})\bigl[c^{2}(g_{11}-4g_{22})+4g_{21}\bigr]}{c^{2}\,D_\xi},
\qquad \mathbf g=(g_{11},g_{21},g_{22})^{\!\top},
\end{equation}
with $N=N^{\!\top}$ given by
\begin{equation}\label{bilinear_N}
\begin{aligned}
N_{11}&=-2c^{2}\bigl(9a^{3}-72a^{2}+4ac^{2}+108a+4c^{2}\bigr), &
N_{22}&=-2(a+1),\\
N_{33}&=c^{2}\bigl(63a^{3}-126a^{2}-2ac^{2}-2c^{2}\bigr), &
N_{12}&=\tfrac12\bigl(a^{3}+22a^{2}-8ac^{2}-168a-8c^{2}\bigr),\\
N_{13}&=\tfrac12 c^{2}\bigl(63a^{3}-126a^{2}-8ac^{2}-8c^{2}\bigr), &
N_{23}&=\tfrac12\bigl(35a^{3}-154a^{2}-4ac^{2}-4c^{2}\bigr).
\end{aligned}
\end{equation}
Setting $g_{11}=g_{22}=0$ returns $\zetaR=-(a+1)/(6c^{2}D_\zeta)\,g_{21}^{2}$ and $\xiR\equiv0$, recovering~\eqref{real_parts_app1}.
The coefficients $\zeta,\xi$ are complex; their imaginary parts play no role in the bifurcation classification, and are reproduced by the companion package.
Already the slice $a=\tfrac12$, $c=1$, $g_{21}=1$ intersects all six regions of Table~\ref{table:signs}, as shown in Figure~\ref{fig:app1_phase}; the stress--strain model is the origin in Region~IV.
On this slice, setting $g_{11}=0$ leaves only Regions~IV--VI, and no two terms in~\eqref{G_bilinear} suffice to realize the full classification.

Direct simulations compare the stress--strain point $(g_{11},g_{22})=(0,0)$, in Region~IV, with a point in Region~V; Figure~\ref{fig:app1_sim} shows convergence to the predicted standing and traveling waves, respectively.
For the traveling branch, the normal form predicts
\[
\abs{\hat u_1}
=\abs{u_{m_c,1}}\sqrt{-\Re\beta_{m_c,1}/\zetaR}.
\]
After the fixed discrete-Fourier normalization, the simulated saturation follows this prediction, testing the magnitude of $\zetaR$ as well as its sign.
Regions~I, II, III, and~VI have no attracting small-amplitude branch and hence no analogous direct check.

\begin{figure}[tb]
\centering
\begin{tikzpicture}
\begin{axis}[
  width=9.5cm, height=9.5cm,
  xmin=-0.5, xmax=0.5, ymin=-0.5, ymax=0.5,
  axis equal image, axis lines=middle,
  xlabel={$g_{11}$}, ylabel={$g_{22}$},
  xlabel style={at={(axis description cs:1,0.5)},anchor=north west},
  ylabel style={at={(axis description cs:0.5,1)},anchor=south east},
  xtick={-0.4,-0.2,0.2,0.4}, ytick={-0.4,-0.2,0.2,0.4},
  tick label style={font=\scriptsize},
  clip=true, samples=400, restrict y to domain*=-0.55:0.55, unbounded coords=jump,
]
\addplot[black,thick,domain=-0.5:0.5]({x},{(-(285*x+321)+sqrt((285*x+321)^2-4*213*(690*x^2+723*x+24)))/426});
\addplot[black,thick,domain=-0.5:0.5]({x},{(-(285*x+321)-sqrt((285*x+321)^2-4*213*(690*x^2+723*x+24)))/426});
\addplot[black,thick,dashed,domain=-0.5:0.5]({x},{(6570*x+3303+sqrt((6570*x+3303)^2-4*14517*(17001*x^2+25227*x+504)))/29034});
\addplot[black,thick,dashed,domain=-0.5:0.5]({x},{(6570*x+3303-sqrt((6570*x+3303)^2-4*14517*(17001*x^2+25227*x+504)))/29034});
\addplot[black,thick,dotted,domain=-0.5:0.5]({x},{(-(18540*x+16785)+sqrt((18540*x+16785)^2+4*5571*(11979*x^2+5139*x+504)))/(-11142)});
\addplot[black,thick,dotted,domain=-0.5:0.5]({x},{(-(18540*x+16785)-sqrt((18540*x+16785)^2+4*5571*(11979*x^2+5139*x+504)))/(-11142)});
\node[circle,draw,fill=white,inner sep=0.8pt,font=\footnotesize] at (axis cs:-0.31,-0.23){I};
\node[circle,draw,fill=white,inner sep=0.8pt,font=\footnotesize] at (axis cs:-0.34, 0.17){II};
\node[circle,draw,fill=white,inner sep=0.8pt,font=\footnotesize] at (axis cs:-0.15, 0.34){III};
\node[circle,draw,fill=white,inner sep=0.8pt,font=\footnotesize] at (axis cs: 0.28, 0.22){IV};
\node[circle,draw,fill=white,inner sep=0.8pt,font=\footnotesize] at (axis cs: 0.27,-0.30){V};
\node[circle,draw,fill=white,inner sep=0.8pt,font=\footnotesize] at (axis cs:-0.03,-0.36){VI};
\addplot[only marks,mark=star,mark size=3pt,black]coordinates{(0,0)};
\end{axis}
\end{tikzpicture}
\caption{Wave-selection regions of the conservative bilinear family~\eqref{G_bilinear} in the $(g_{11},g_{22})$-plane, at $a=\tfrac12$, $c=1$, $g_{21}=1$ (the linear operator~\eqref{system2}, $m_c=1$).
All six regions of Table~\ref{table:signs} occur; the stress--strain model~\eqref{sigma-cubic} with $\eta=0$ is at the origin (Region~IV).
Solid, dashed and dotted curves are $\zetaR=0$, $\zetaR+\xiR=0$ and $\zetaR-\xiR=0$.}
\label{fig:app1_phase}
\end{figure}
\begin{figure}[tb]
\centering
\includegraphics[width=\linewidth]{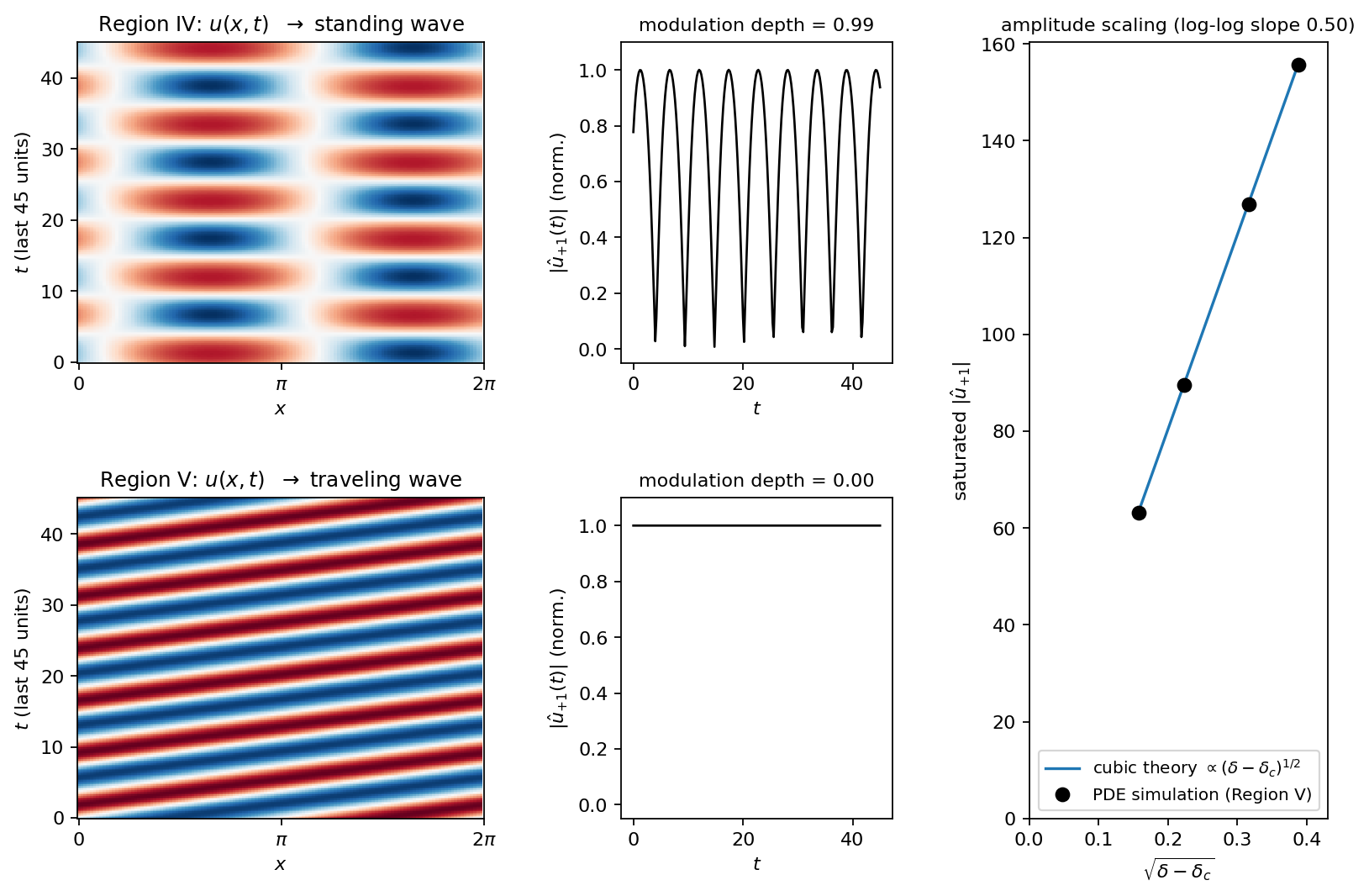}
\caption{Direct integration of~\eqref{system2} confirming the predicted selection ($a=\tfrac12$, $c=1$, $g_{21}=1$).
The left and middle panels use $\delta-\delta_c=0.05$; the right panel uses several positive offsets from $\delta_c$.
Top, the stress--strain point in Region~IV ($g_{11}=g_{22}=0$): $u(x,t)$ has fixed nodes and $|\hat u_1(t)|$ pulses to zero --- a standing wave.
Bottom, Region~V ($g_{11}=0.35$, $g_{22}=-0.30$): $u(x,t)$ translates uniformly and $|\hat u_1(t)|$ is constant --- a traveling wave.
The modulation depth $(\max_t|\hat u_1|-\min_t|\hat u_1|)/(\max_t|\hat u_1|+\min_t|\hat u_1|)$, computed over the displayed window, is $1$ for a pure standing wave and $0$ for a pure traveling wave; the measured values $0.99$ and $0.00$ identify the converged states quantitatively.
Right: the saturated amplitude lies on the cubic normal form branch $\propto(\delta-\delta_c)^{1/2}$.}
\label{fig:app1_sim}
\end{figure}

\section{Proofs of the Main Results}\label{sec:proof}

\subsection{Existence of the Center Manifold}\label{sec:CM_existence}

We reduce the dynamics of \eqref{main} near $\lambda=\lambda_c$ to a four-dimensional
center manifold by applying the center manifold theorems of \cite[Chapter~2]{haragus2010local}, specifically Theorems~2.9 and~2.17 together with their parameter-dependent version
(see also \cite{vanderbauwhede1992center}).
We verify Hypotheses~2.1, 2.4, and~2.15 of \cite{haragus2010local} in the
Hilbert triple $\mathcal{Z}=\mathcal{H}^{2m_L}\hookrightarrow
\mathcal{Y}=\mathcal{H}^{2m_L-r_G}\hookrightarrow\mathcal{X}=\mathcal{H}^{0}$;
Condition~\eqref{rGsL} gives $2m_L-r_G\ge 1$, so these embeddings are continuous and dense.
By \cite[Theorem~2.17]{haragus2010local}, Hypothesis~2.15 implies
Hypothesis~2.7, and Theorem~2.9 then applies.

\medskip
\noindent\textbf{Hypothesis 2.1.}
By \eqref{Lij}, $L\in\mathcal{L}(\mathcal{Z},\mathcal{X})$.
Since $2m_L-r_G\ge 1>\tfrac12$, the space $H^{2m_L-r_G}_{\mathrm{per}}$ is a Banach algebra, and the order bound on every factor in~\eqref{G} gives $G\in C^\infty\bigl(\mathcal{Z},\bigl(H^{2m_L-r_G}_{\mathrm{per}}\bigr)^2\bigr)$ with $G(0)=0$ and $DG(0)=0$.
Conservativity ensures that each component of $G$ has zero mean, so the image lies in $\mathcal{Y}$.

\medskip
\noindent\textbf{Hypothesis 2.4.}
By Lemma~\ref{lem:PES}, $\sigma(L(\lambda_c))=\sigma_0\cup\sigma_-$ with $\sigma_0=\{\pm i\omega_c\}$ semisimple of multiplicity $2$ and $\sup_{\sigma_-}\Re\lambda\le-\rho<0$.
\medskip

\noindent\textbf{Hypothesis 2.15 (Resolvent estimate).}
Since $\mathcal X$, $\mathcal Y$, $\mathcal Z$ are Hilbert spaces, \cite[Remark~2.16]{haragus2010local} reduces Hypothesis~2.15 to the single resolvent estimate~\cite{kato1966perturbation,pazy1983semigroups} $\|(i\omega-L)^{-1}\|_{\mathcal L(\mathcal X)}\le c/|\omega|$ for $|\omega|\ge\omega_0$, which is the content of the following theorem.

\begin{theorem}[Resolvent estimate]\label{thm:resolvent}
Suppose Assumption~\ref{cond:D} and Assumption~\ref{hopf_assumption} hold.
Then there exist constants $\omega_0, C > 0$, depending only on the coefficients of $L$, such that for every $\tilde\omega\in\mathbb{R}$ with $|\tilde\omega|\ge\omega_0$, $i\tilde\omega$ lies in the resolvent set of $L$, and
\begin{equation}\label{eq:resolvent-bound}
\|(i\tilde\omega\,\mathrm{Id} - L)^{-1}\|_{\mathcal{L}(L^2)} \le \frac{C}{|\tilde\omega|}.
\end{equation}
\end{theorem}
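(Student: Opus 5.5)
The plan is to diagonalize $L$ by the Fourier series and reduce the operator bound to a uniform bound on $2\times2$ matrices. Since $L$ has constant coefficients, it acts on the mode $e^{imx}\mathbf w$ by $M_m$. By Parseval,
\[
\|(i\tilde\omega-L)^{-1}\|_{\mathcal L(L^2)}=\sup_{m\ne0}\|(i\tilde\omega I_2-M_m)^{-1}\|_{\mathbb C^{2\times2}},
\]
provided every block is invertible. So it suffices to find $\omega_0,C$ with $\|(i\tilde\omega-M_m)^{-1}\|\le C/|\tilde\omega|$ for all $m\ne0$ and all $|\tilde\omega|\ge\omega_0$. We write $M_m=D_m+N_m$. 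Here $D_m=\operatorname{diag}(d_1(m),d_2(m))$ with $d_j(m)=\sum_k(-1)^kb_{j,2k}m^{2k}$, and $N_m$ is the purely imaginary off-diagonal part. By Assumption~\ref{cond:D}, $d_j(m)=(-1)^{m_L}b_{j,2m_L}m^{2m_L}+O(m^{2m_L-2})$, while $|(N_m)_{12}|,|(N_m)_{21}|=O(|m|^{2m_L-1})$. The off-diagonal entries are therefore one order lower than the diagonal.

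\textbf{Step 1 (signs of the diagonal).} Assumption~\ref{hopf_assumption}(ii),(iv) for large $|m|$ give $\det M_m>0$ and $\tr M_m<0$. Their leading terms are $b_{1,2m_L}b_{2,2m_L}m^{4m_L}$ and $(-1)^{m_L}(b_{1,2m_L}+b_{2,2m_L})m^{2m_L}$. Hence both $(-1)^{m_L}b_{j,2m_L}$ are negative: $d_j(m)\le -c_0m^{2m_L}$ for $|m|\ge m_1$.

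\textbf{Step 2 (high modes, $|m|\ge m_1$).} We estimate $(i\tilde\omega-M_m)^{-1}$ through the explicit $2\times2$ inverse
\[
\frac{1}{(i\tilde\omega-d_1)(i\tilde\omega-d_2)-n_{12}n_{21}}\begin{pmatrix} i\tilde\omega-d_2 & n_{12}\\ n_{21} & i\tilde\omega-d_1\end{pmatrix}.
\]
Since $d_j<0$, we have $|i\tilde\omega-d_j|\ge \max(|\tilde\omega|,|d_j|)\gtrsim |\tilde\omega|+m^{2m_L}$. In addition, $|n_{12}n_{21}|=O(m^{4m_L-2})$. This is $\le\frac12|i\tilde\omega-d_1||i\tilde\omega-d_2|$ once $m_1$ is enlarged, uniformly in $\tilde\omega$. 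The determinant is then bounded below by $\tfrac12\prod_j|i\tilde\omega-d_j|$. The diagonal entries of the inverse are then $\lesssim 1/|i\tilde\omega-d_j|\le 1/|\tilde\omega|$. The off-diagonal entries are $\lesssim m^{2m_L-1}/\bigl((|\tilde\omega|+m^{2m_L})^2\bigr)\le 1/|\tilde\omega|$. This gives the bound for all $|m|\ge m_1$ and all real $\tilde\omega\ne0$. This step is the crux, and it is exactly where the diagonal principal part (Assumption~\ref{cond:D}) is used. Without it, the Remark~\ref{rem:gap_necessary} example shows eigenvalues tending to $0$, and the off-diagonal coupling would not be dominated.

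\textbf{Step 3 (finitely many low modes).} For $0<|m|<m_1$ the matrices $M_m$ form a finite set. For $|\tilde\omega|>2\max_{|m|<m_1}\|M_m\|$ a Neumann series gives $\|(i\tilde\omega-M_m)^{-1}\|\le 2/|\tilde\omega|$. We take $\omega_0$ to be this threshold, and also at least $1$. Combining Steps 2 and 3 shows that every $i\tilde\omega$ with $|\tilde\omega|\ge\omega_0$ lies in the resolvent set, with the stated bound $C/|\tilde\omega|$. The constants depend only on the $b_{i,k}(\lambda_c)$. The main care needed is to make the lower bound on the determinant in Step 2 uniform simultaneously in $m$ and $\tilde\omega$, which the two-sided bound $|i\tilde\omega-d_j|\gtrsim|\tilde\omega|+m^{2m_L}$ handles.
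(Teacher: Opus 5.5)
Your proof is correct, but its key step differs from the paper's. The paper performs the same Fourier reduction and the same Neumann-series treatment of the finitely many low modes. The difference is in the high modes. There the paper works with the invariants $\tau=\tr M_m$ and $\delta=\det M_m$. It bounds the inverse through the adjugate, $\|(i\tilde\omega I_2-M_m)^{-1}\|_2\le 2N/\sqrt{(\delta-\tilde\omega^2)^2+\tau^2\tilde\omega^2}$. It then partitions the $(\tilde\omega,m)$-plane into three regimes: $\tilde\omega^2\ll\delta$, $\tilde\omega^2\gg\delta$, and the comparable regime in which only the term $\tau\tilde\omega$ keeps the characteristic polynomial away from zero. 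This uses $\delta\asymp m^{4m_L}>0$, taken from Assumption~\ref{hopf_assumption}(ii), and $|\tau|\asymp m^{2m_L}$.

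You instead keep the diagonal/off-diagonal splitting. Because $d_1,d_2$ are real, $|(i\tilde\omega-d_1)(i\tilde\omega-d_2)|=\sqrt{(\tilde\omega^2+d_1^2)(\tilde\omega^2+d_2^2)}$ controls all three regimes at once. The product $n_{12}n_{21}=O(m^{4m_L-2})$ is then a relative perturbation of size $O(m^{-2})$, so no case analysis is needed and the entrywise bounds on the inverse follow immediately.

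Your Step 1 is in fact never used. The inequality $|i\tilde\omega-d_j|\ge\max(|\tilde\omega|,|d_j|)$ holds for every real $d_j$, whatever its sign. So your argument proves the estimate from Assumption~\ref{cond:D} alone, which is slightly sharper than Remark~\ref{rem:resolvent-hyp}, where condition (ii) is recorded as used. The paper's version has the advantage of being phrased in $\tau$ and $\delta$, the same quantities as the spectral hypotheses.

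One correction to your closing aside. The example of Remark~\ref{rem:gap_necessary} shows loss of the spectral gap (eigenvalues accumulating at $0$), which is a separate hypothesis. It is not evidence that the large-$|\tilde\omega|$ bound on $i\mathbb{R}$ fails. It only shows that your diagonal-dominance argument is unavailable there, since $d_2\equiv0$.
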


\begin{proof}
The proof uses the adjugate formula for $2\times 2$ matrices and a partition of the $(\tilde\omega,m)$-plane into three regimes; this strategy is a self-contained instance of the parameter ellipticity technique for Fourier multiplier systems (see \cite{denkhieberprus2003} for the general $n\times n$ parabolic setting, and \cite{agmon1962} for the scalar elliptic case).

Write $\tau:=\tr M_m(\lambda_c)$ and $\delta:=\det M_m(\lambda_c)$; both are real, and the diagonal entries $M_{11},M_{22}$ of $M_m(\lambda_c)$ are real while the off-diagonal entries $M_{12},M_{21}$ are purely imaginary.
Set $p_m(z):=z^2-\tau z+\delta=\det(zI_2-M_m(\lambda_c))$.

\emph{Step 1: Reduction to the symbol.}
Since $L$ acts on $\mathcal{H}$ as the Fourier multiplier $e^{imx}\mathbf c\mapsto e^{imx}M_m\mathbf c$, the operator $i\tilde\omega\,\mathrm{Id}-L$ is invertible iff every $i\tilde\omega I_2-M_m$ with $m\ne0$ is, and
\[
\|(i\tilde\omega\,\mathrm{Id}-L)^{-1}\|_{\mathcal L(L^2)}=\sup_{m\ne0}\|(i\tilde\omega I_2-M_m)^{-1}\|_2 .
\]

\emph{Step 2: Adjugate bound.}
Whenever $p_m(i\tilde\omega)\ne0$, since $p_m(i\tilde\omega)=(\delta-\tilde\omega^2)-i\tau\tilde\omega$,
\[
\|(i\tilde\omega I_2-M_m)^{-1}\|_2\le\frac{2N}{D},\qquad
D:=\sqrt{(\delta-\tilde\omega^2)^2+\tau^2\tilde\omega^2},
\]
\[
N:=\max\bigl\{\sqrt{M_{11}^2+\tilde\omega^2},\ \sqrt{M_{22}^2+\tilde\omega^2},\ |M_{12}|,\ |M_{21}|\bigr\},
\]
using $(i\tilde\omega I_2-M_m)^{-1}=p_m(i\tilde\omega)^{-1}\operatorname{adj}(i\tilde\omega I_2-M_m)$ and $\|\operatorname{adj}\|_2\le\|\operatorname{adj}\|_F\le2N$.

\emph{Step 3: Coefficient bounds.}
By Assumption~\ref{cond:D} the off-diagonal entries have degree $\le2m_L-1$ and each diagonal entry has degree $2m_L$ in $m$, so for $|m|\ge1$
\[
|M_{jj}(m)|\le A\,m^{2m_L},\qquad |M_{12}(m)|,|M_{21}(m)|\le B\,m^{2m_L},
\]
with $A,B$ depending only on the coefficients of $L$.
Hence $N\le A_1(m^{2m_L}+|\tilde\omega|)$, $A_1:=A+B+1$.

\emph{Step 4: Leading behavior.}
By Assumption~\ref{cond:D}, $\deg(M_{12}M_{21})\le4m_L-2$, so $\delta$ has degree $4m_L$ with leading coefficient $b_{1,2m_L}b_{2,2m_L}$.
Condition~(ii) of Assumption~\ref{hopf_assumption} ($\delta>0$ for large $m$) forces $b_{1,2m_L}b_{2,2m_L}>0$; thus $b_{1,2m_L}$ and $b_{2,2m_L}$ share a sign, $b_{1,2m_L}+b_{2,2m_L}\ne0$, and $\tau$ has degree exactly $2m_L$.
Consequently there are constants $0<c_1\le C_1$, $0<c_2\le C_2$ and an integer $m_*\ge1$, depending only on the coefficients of $L$ and chosen larger than every real root of $\tau$ such that
\[
c_1m^{4m_L}\le\delta(m)\le C_1m^{4m_L},\qquad c_2m^{2m_L}\le|\tau(m)|\le C_2m^{2m_L}\qquad(|m|\ge m_*).
\]
In particular $m_*>m_c$, since $\tau(m_c)=0$.

\emph{Step 5: High modes $|m|\ge m_*$.}
Fix $|m|\ge m_*$; then $m\ne\pm m_c$, so $\tau\ne0$ and $i\tilde\omega\notin\sigma(M_m)$ for every $\tilde\omega\in\mathbb{R}$.
The three cases below partition all $\tilde\omega\in\mathbb{R}$, and in each $D>0$.

\emph{(a) $\tilde\omega^2\le\tfrac14c_1m^{4m_L}$.}
Then $\tilde\omega^2\le\tfrac14\delta$, so $D\ge\delta-\tilde\omega^2\ge\tfrac34c_1m^{4m_L}$, while $|\tilde\omega|\le\tfrac12\sqrt{c_1}\,m^{2m_L}$ gives $N\le A_1(1+\tfrac12\sqrt{c_1})m^{2m_L}$.
Hence $|\tilde\omega|N/D\le\tfrac{2A_1(1+\frac12\sqrt{c_1})}{3\sqrt{c_1}}=:K_a$.

\emph{(b) $\tilde\omega^2>\tfrac14c_1m^{4m_L}$ and $\tilde\omega^2\ge2\delta$.}
Then $D\ge\tilde\omega^2-\delta\ge\tfrac12\tilde\omega^2$, and $m^{2m_L}<\tfrac{2}{\sqrt{c_1}}|\tilde\omega|$ gives $N\le A_2|\tilde\omega|$ with $A_2:=A_1(1+\tfrac2{\sqrt{c_1}})$.
Hence $|\tilde\omega|N/D\le2A_2=:K_b$.

\emph{(c) $\tilde\omega^2>\tfrac14c_1m^{4m_L}$ and $\tilde\omega^2<2\delta$.}
Then $|\tilde\omega|<\sqrt{2C_1}\,m^{2m_L}$, so $|\tau|\ge c_2m^{2m_L}\ge\tfrac{c_2}{\sqrt{2C_1}}|\tilde\omega|$ and $D\ge|\tau\tilde\omega|\ge\tfrac{c_2}{\sqrt{2C_1}}\tilde\omega^2$, while $N\le A_2|\tilde\omega|$ as in~(b).
Hence $|\tilde\omega|N/D\le\tfrac{A_2\sqrt{2C_1}}{c_2}=:K_c$.

In every case $D>0$, so $i\tilde\omega\notin\sigma(M_m)$, and $\|(i\tilde\omega I_2-M_m)^{-1}\|_2\le2N/D\le2K/|\tilde\omega|$ with $K:=\max(K_a,K_b,K_c)$.

\emph{Step 6: Low modes $1\le|m|<m_*$.}
For the finitely many such $m$---which include the critical pair $\pm m_c$---set $\omega_0:=2\max_{1\le|m|<m_*}\|M_m\|_2$.
Since $\omega_c=|\beta_{m_c,1}|\le\|M_{m_c}\|_2$, we have $\omega_0\ge2\|M_{m_c}\|_2>\omega_c$, so $i\omega_c$ is excluded from the range below.
For $|\tilde\omega|\ge\omega_0$ one has $\|M_m\|_2\le\tfrac12|\tilde\omega|$, so a Neumann series gives $i\tilde\omega I_2-M_m$ invertible with $\|(i\tilde\omega I_2-M_m)^{-1}\|_2\le2/|\tilde\omega|$.

For $|\tilde\omega|\ge\omega_0$ every $i\tilde\omega I_2-M_m$ ($m\ne0$) is invertible, so $i\tilde\omega\in\rho(L)$, and by Steps~5--6 the supremum in Step~1 is at most $C/|\tilde\omega|$ with $C:=\max(2K,2)$.
\end{proof}

\begin{remark}\label{rem:resolvent-hyp}
The proof uses only Assumption~\ref{cond:D} and condition~(ii) of Assumption~\ref{hopf_assumption}; conditions~(i), (iii), (iv), (v) play no role in it.
An energy-method proof under the stronger subordination hypothesis is given in Appendix~\ref{app:energy}; the comparison there shows that the energy method, which produces coercivity only at order $m_L$, cannot reach the sharper hypothesis~\eqref{eq:condD}.
\end{remark}

Hypotheses~2.1, 2.4, and~2.15 of \cite{haragus2010local} are verified, and Theorem~\ref{thm:cm_reduction} follows from \cite[Theorems~2.9 and~2.17]{haragus2010local}.

\subsection{Reduced System for the Hopf Bifurcation}\label{sec:reduced_system}

We decompose the solution on the center manifold
as $\psi = \psi_c + \Phi$, where $\Phi$ is the stable part and
\begin{equation}\label{psi_c}
\psi_c = z(t)\,e^{im_c x} + \overline{z(t)}\,e^{-im_c x},
\end{equation}
is the center part, with $z(t) = A(t)\mathbf{q}_{m_c,1} + B(t)\mathbf{q}_{m_c,2}$ the Fourier coefficient of $\psi_c$ at mode $m_c$.
We write
\begin{equation}\label{z12ab}
z(t) = (z_1(t), z_2(t))^\top \quad 
z_1(t) = A(t) u_{m_c,1} + B(t) u_{m_c,2}, \quad 
z_2(t) = A(t) v_{m_c,1} + B(t) v_{m_c,2}.
\end{equation}
The change of coordinates~\eqref{z12ab} is invertible, since $\mathbf q_{m_c,1} = (u_{m_c,1}, v_{m_c,1})^\top$ and $\mathbf q_{m_c,2} = (u_{m_c,2}, v_{m_c,2})^\top$ are linearly independent.

Thus the components of $\psi_c = (u_c, v_c)^\top$ are
\[
u_c = z_1(t)\,e^{im_cx} + \overline{z_1(t)}\,e^{-im_cx}, \qquad
v_c = z_2(t)\,e^{im_cx} + \overline{z_2(t)}\,e^{-im_cx}.
\]
The center manifold function is expanded directly in Fourier blocks,
\begin{equation}\label{phi}
\Phi=\Phi(\psi_c)=
\sum_{\substack{m>0\\m\ne m_c}}e^{imx}
\begin{pmatrix}U_m(t)\\V_m(t)\end{pmatrix}
+\mathrm{c.c.}
=:\begin{pmatrix}\Phi_1\\\Phi_2\end{pmatrix}.
\end{equation}
This blockwise representation does not assume that the stable matrices $M_m$ are diagonalizable.
Projecting \eqref{main} onto the critical adjoint eigenvectors 
$e_{m_c,n}^*$ and using the orthonormality relation \eqref{orth} gives
\begin{equation}\label{dd}
\frac{dA}{dt} = \beta_{m_c,1} A(t) + \langle G(\psi), e_{m_c,1}^* \rangle_{L^2}, \qquad
\frac{dB}{dt} = \beta_{m_c,2} B(t) + \langle G(\psi), e_{m_c,2}^* \rangle_{L^2}.
\end{equation}
It remains to compute $\mathcal{G}_n := \langle G(\psi_c + \Phi), e_{m_c,n}^* \rangle_{L^2}$ for $n = 1,2$.
Expanding $G(\psi_c+\Phi)$ to first order in $\Phi$ gives $\mathcal{G}_n = \mathcal{S}_n + \mathcal{C}_n + O(\Phi^2)$, where the self-interaction part is
\begin{equation} \label{S_n}
\mathcal{S}_n = \sum_{\abs{\alpha_1}+\abs{\alpha_2}\geq 2} 
 \int_{0}^{2\pi} 
(Du_c)^{\alpha_1}(Dv_c)^{\alpha_2}\, \langle\mathbf{a}_{\alpha_1,\alpha_2}, e_{m_c,n}^*\rangle_{\mathbb{C}^2}\,dx,
\end{equation}
and the cross-interaction part, linear in $\Phi$, is
\begin{equation}\label{C_n}
\begin{split}
\mathcal{C}_n = \sum_{\abs{\alpha_1} + \abs{\alpha_2} \geq 2} \int_{0}^{2\pi}  \Biggl(
&\left[\frac{d}{d\varepsilon}\bigg|_{\varepsilon=0}(D(u_c+\varepsilon\Phi_1))^{\alpha_1}\right](Dv_c)^{\alpha_2} \\
&+ \left[\frac{d}{d\varepsilon}\bigg|_{\varepsilon=0}(D(v_c+\varepsilon\Phi_2))^{\alpha_2}\right](Du_c)^{\alpha_1}
\Biggr) \langle\mathbf{a}_{\alpha_1, \alpha_2},e_{m_c,n}^*\rangle_{\mathbb{C}^2} \, dx,
\end{split}
\end{equation}
with the linearization
\begin{equation}\label{product_rule}
\frac{d}{d\varepsilon}\bigg|_{\varepsilon=0}(D(u_c+\varepsilon\Phi_1))^{\alpha_1}
= \sum_{\substack{j\geq 0\\\alpha_{1,j}\neq 0}}
\alpha_{1,j}\,(\partial_x^j u_c)^{\alpha_{1,j}-1}
\prod_{l\neq j}(\partial_x^l u_c)^{\alpha_{1,l}}\,\partial_x^j\Phi_1,
\end{equation}
and analogously for the $v$-component.

\subsection{Self-Interactions of the Critical Modes}\label{sec:self-interaction}

Inserting the expressions for $u_c$ and $v_c$ into $\mathcal{S}_n$ 
defined in \eqref{S_n}, we have $\mathcal{S}_n = \sum_{N\geq 2} 
\mathcal{S}_{n,N}$, where the order-$N$ contribution is
\begin{equation}\label{S_i}
\mathcal{S}_{n,N} = \sum_{\abs{\alpha_1}+\abs{\alpha_2}=N} 
\int_{0}^{2\pi} \left[ D\left( z_1 e^{im_cx} + \mathrm{c.c.} \right) 
\right]^{\alpha_1} 
\left[ D\left( z_2 e^{im_cx} + \mathrm{c.c.} \right) \right]^{\alpha_2}
\langle\mathbf{a}_{\alpha_1,\alpha_2}, e_{m_c,n}^*\rangle_{\mathbb{C}^2}\, dx,
\end{equation}
for $n=1,2$ and $N \geq 2$.

Expanding by the binomial theorem,
\begin{equation}\label{binomial_expansion}
\begin{aligned}
\mathcal{S}_{n,N} &= \sum_{\abs{\alpha_1}+\abs{\alpha_2} = N} 
\sum_{\substack{0\leq p\leq \mathfrak{m}_e(\alpha_1) \\ 0\leq q\leq \mathfrak{m}_o(\alpha_1) \\ 
0\leq r\leq \mathfrak{m}_e(\alpha_2) \\ 0\leq s\leq \mathfrak{m}_o(\alpha_2)}} 
\binom{\mathfrak{m}_e(\alpha_1)}{p}\binom{\mathfrak{m}_o(\alpha_1)}{q}\binom{\mathfrak{m}_e(\alpha_2)}{r}\binom{\mathfrak{m}_o(\alpha_2)}{s}
(-1)^{q+s}(im_c)^{\mathfrak{m}_t(\alpha_1+\alpha_2)} \\
&\quad \times z_1^{\abs{\alpha_1}-(p+q)} \overline{z}_1^{p+q}
z_2^{\abs{\alpha_2}-(r+s)} \overline{z}_2^{r+s}\,
\langle\mathbf{a}_{\alpha_1, \alpha_2},\mathbf{q}_{m_c,n}^*\rangle_{\mathbb{C}^2}
\int_{0}^{2\pi} e^{iKm_c x}\, dx,
\end{aligned}
\end{equation}
where $K = N - 2(p+q+r+s) - 1$. Since $\int_0^{2\pi} 
e^{iKm_cx}\,dx = 2\pi\delta_{K,0}$, only terms with $K=0$ contribute. 
When $N$ is even, $K=0$ requires $N - 2(p+q+r+s) = 1$, which has no 
solution in non-negative integers, so $\mathcal{S}_{n,N} = 0$.
That proves the following parity result.
\begin{lemma}\label{lem:parity}
Self-interaction of the critical modes through even-order nonlinear 
terms vanishes identically: $\mathcal{S}_{n,N} = 0$ for all even 
$N \geq 2$ and $n=1,2$. Consequently, $\mathcal{S}_n = 
\sum_{N \geq 3,\,N\text{ odd}} \mathcal{S}_{n,N}$.
\end{lemma}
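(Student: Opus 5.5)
The plan is a Fourier-mode counting argument applied to \eqref{S_i}. Expanding the product of critical modes leaves only integrals of pure exponentials, and a parity obstruction forces all of them to vanish when $N$ is even.

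First I write each factor in the product. For $w\in\{u_c,v_c\}$ with Fourier coefficient $z_k$, we have $\partial_x^j w = (im_c)^j z_k e^{im_cx} + (-im_c)^j \overline{z_k} e^{-im_cx}$. Hence every factor $\partial_x^j u_c$ or $\partial_x^j v_c$ is a sum of exactly two terms, carrying the frequencies $+m_c$ and $-m_c$. The monomial $(Du_c)^{\alpha_1}(Dv_c)^{\alpha_2}$ has $N=|\alpha_1|+|\alpha_2|$ such factors. Expanding it, as in \eqref{binomial_expansion}, gives a finite sum of terms of the form (constant)$\times e^{i(N-2j)m_cx}$, where $j=p+q+r+s$ counts the factors for which the conjugate branch was chosen. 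The constant depends on the $z$'s and on $\mathbf a_{\alpha_1,\alpha_2}$, and $0\le j\le N$.

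Next I use the adjoint eigenfunction. Since $e_{m_c,n}^*=e^{im_cx}\mathbf q_{m_c,n}^*$, the $L^2$ pairing introduces the factor $\overline{e^{im_cx}}=e^{-im_cx}$. Each term therefore contributes a multiple of $\int_0^{2\pi}e^{i(N-2j-1)m_cx}\,dx = 2\pi\,\delta_{N-2j-1,0}$, using $m_c\ge1$. If $N$ is even, then $N-2j-1$ is odd and hence never zero, so every term vanishes and $\mathcal S_{n,N}=0$.

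Finally, the sum defining $\mathcal S_n$ is finite by Assumption~\ref{ass:S}\ref{S1}, so it decomposes into its order-$N$ pieces $\mathcal S_{n,N}$. Dropping the even $N$ leaves $\mathcal S_n=\sum_{N\ge3,\,N\text{ odd}}\mathcal S_{n,N}$; the smallest surviving order is $3$ because the sum in \eqref{G} starts at $N\ge2$.

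None of these steps is a real obstacle. The only point needing care is bookkeeping: the conjugate branch at derivative order $j$ carries $(-im_c)^j=(-1)^j(im_c)^j$, which produces the sign pattern $(-1)^{q+s}$ in \eqref{binomial_expansion}. These signs affect only the coefficients, not the frequencies, so the parity argument does not depend on them.
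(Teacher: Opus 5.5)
Your proposal is correct and follows the paper's own proof. The paper also expands \eqref{S_i} binomially into exponentials and pairs with the factor $e^{-im_cx}$ coming from $e_{m_c,n}^*$. It then observes that $\int_0^{2\pi}e^{iKm_cx}\,dx$ with $K=N-2(p+q+r+s)-1$ vanishes unless $K=0$, which cannot happen for even $N$.
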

By Lemma~\ref{lem:parity} it suffices to consider odd $N$; the leading term is $N=3$.
When $N=3$, $K=0$ in \eqref{binomial_expansion} gives $p+q+r+s = 1$, yielding
\begin{equation*}
\mathcal{S}_{n,3}(z_1, z_2) = s_{n,1} z_1 \abs{z_1}^2 + s_{n,2} z_1 \abs{z_2}^2 + s_{n,3} z_2 \abs{z_1}^2 + s_{n,4} z_2 \abs{z_2}^2 + s_{n,5} \overline{z_1}z_2^2 + s_{n,6} z_1^2 \overline{z_2},
\end{equation*}
where the coefficients $s_{n,k}$ are given by \eqref{s_k_coeff}.
Substituting the coordinate change \eqref{z12ab} into $\mathcal{S}_{n,3}$ and inserting into~\eqref{dd} gives, at cubic order and ignoring the cross-interaction $\mathcal{C}_n$,
\begin{equation}\label{red1}
\begin{aligned}
\frac{dA}{dt} &= \beta_{m_c,1} A + P_1(A, B, \bar{A}, \bar{B}), \\
\frac{dB}{dt} &= \beta_{m_c,2} B + P_2(A, B, \bar{A}, \bar{B}),
\end{aligned}
\end{equation}
where $P_n = \hat{s}_{n,1} A|A|^2 + \hat{s}_{n,2} A|B|^2 + \hat{s}_{n,3} B|A|^2 + \hat{s}_{n,4} B|B|^2 + \hat{s}_{n,5} \bar{A}B^2 + \hat{s}_{n,6} A^2\bar{B}$.
These coefficients are obtained by expanding~\eqref{S_i}; the two resonant coefficients in the $A$ equation are precisely $\hat s_{1,1}$ and $\hat s_{1,2}$ in~\eqref{s_hat_k}.

\subsection{Cross-Interaction Terms}\label{sec:cross-interaction}
Write $\mathcal{C}_n = \sum_{N\geq 2}\mathcal{C}_{n,N}$, where $\mathcal{C}_{n,N}$ collects the terms of~\eqref{C_n} with $\abs{\alpha_1}+\abs{\alpha_2}=N$.
Since $\Phi=O(\abs{z}^{2})$ by tangency, $\mathcal{C}_{n,N}=O(\abs{z}^{N+1})$, so at cubic order only $\mathcal{C}_{n,2}$ contributes, consistent with the $O(\Phi^2)$ truncation of $\mathcal{G}_n$.

As in~\eqref{binomial_expansion},
\[
(Du_c)^{\alpha_1}
= (im_c)^{\mathfrak{m}_t(\alpha_1)}
\sum_{p=0}^{\mathfrak{m}_e(\alpha_1)}\sum_{q=0}^{\mathfrak{m}_o(\alpha_1)}
\binom{\mathfrak{m}_e(\alpha_1)}{p}\binom{\mathfrak{m}_o(\alpha_1)}{q}(-1)^q\,
z_1^{|\alpha_1|-p-q}\,\bar{z}_1^{p+q}\,
e^{im_c(|\alpha_1|-2(p+q))x},
\]
and analogously
\[
(Dv_c)^{\alpha_2}
= (im_c)^{\mathfrak{m}_t(\alpha_2)}
\sum_{r=0}^{\mathfrak{m}_e(\alpha_2)} \sum_{s=0}^{\mathfrak{m}_o(\alpha_2)}
\binom{\mathfrak{m}_e(\alpha_2)}{r}\binom{\mathfrak{m}_o(\alpha_2)}{s}(-1)^s\,
z_2^{|\alpha_2|-r-s}\,\bar{z}_2^{r+s}\,
e^{im_c(|\alpha_2|-2(r+s))x}.
\]
The reduced factor $(Du_c)^{\alpha_1-e_j}$ has the same structure with the prefactor $(im_c)^{\mathfrak{m}_t(\alpha_1)-j}$ and with $\mathfrak{m}_e(\alpha_1)$ (even $j$) or $\mathfrak{m}_o(\alpha_1)$ (odd $j$) lowered by one.

\begin{lemma}\label{lem:parity_sign}
$\mathfrak{m}_t(\alpha_1+\alpha_2)\equiv \mathfrak{m}_o(\alpha_1)+\mathfrak{m}_o(\alpha_2)\pmod{2}$.
\end{lemma}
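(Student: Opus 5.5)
The plan is to reduce the statement to a parity count performed separately for each multi-index, and then to use additivity.

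By Definition~\ref{defm0m1}, $\mathfrak m_t(\alpha)=\sum_{i\ge0} i\,\alpha_i$ and $\mathfrak m_o(\alpha)=\sum_{i\ge0}\alpha_{2i+1}$. Reducing modulo $2$, every summand $i\alpha_i$ with $i$ even vanishes. Every summand with $i$ odd is congruent to $\alpha_i$. Hence
\[
\mathfrak m_t(\alpha)\equiv\sum_{i\ \mathrm{odd}}\alpha_i=\mathfrak m_o(\alpha)\pmod 2
\]
for every $\alpha\in\Lambda$. This sum is finite because $\abs{\alpha}<\infty$.

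Next we use that both $\mathfrak m_t$ and $\mathfrak m_o$ are additive (linear) in $\alpha$ under componentwise addition in $\Lambda$. This gives $\mathfrak m_t(\alpha_1+\alpha_2)=\mathfrak m_t(\alpha_1)+\mathfrak m_t(\alpha_2)$. Applying the single-index congruence to $\alpha_1$ and to $\alpha_2$ and adding yields the claim.

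There is no real obstacle here. The only point to state explicitly is the additivity of $\mathfrak m_t$ and $\mathfrak m_o$, which holds because both are defined as linear functionals with integer weights.
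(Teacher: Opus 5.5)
Your proof is correct and follows the paper's own argument: the paper also splits $\mathfrak{m}_t(\alpha)$ into its even-index and odd-index parts, notes that the even-index part is even so that $\mathfrak{m}_t(\alpha)\equiv\mathfrak{m}_o(\alpha)\pmod 2$, and then uses linearity of $\mathfrak{m}_t$ to add the two congruences for $\alpha_1$ and $\alpha_2$. The additivity of $\mathfrak{m}_o$ that you mention is harmless but not needed, since the right-hand side is already written as $\mathfrak{m}_o(\alpha_1)+\mathfrak{m}_o(\alpha_2)$.
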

\begin{proof}
Since $\mathfrak{m}_t$ is linear, it suffices to treat each component.
Splitting the defining sum:
\[
\mathfrak{m}_t(\alpha_1)
= \sum_{a\geq 0}2a\,\alpha_{1,2a}
+\sum_{a\geq 0}(2a+1)\,\alpha_{1,2a+1}
\equiv \mathfrak{m}_o(\alpha_1)\pmod{2},
\]
since the first sum is even. Identically $\mathfrak{m}_t(\alpha_2)\equiv \mathfrak{m}_o(\alpha_2)$; summing gives the result.
\end{proof}

Substituting $\partial_x^j\Phi_1=\sum_{k}(ik)^j U_k e^{ikx}$ (with the convention $U_{-k}=\overline{U_k}$) into~\eqref{product_rule} and projecting onto $e^{im_cx}$, the integral is nonzero only when
\[
k=m_c\bigl(2(p+q+r+s)-(|\alpha_1|+|\alpha_2|-2)\bigr),
\]
where $0\leq p+q+r+s\leq|\alpha_1|+|\alpha_2|-1$ since $p+q\leq|\alpha_1|-1$, $r+s\leq|\alpha_2|$ for the $\Phi_1$ term, and symmetrically for the $\Phi_2$ term.
For $|\alpha_1|+|\alpha_2|=2$ this reads $k=2m_c(p+q+r+s)\in\{0,2m_c\}$, and $k=0$ is excluded by the mean-zero restriction; hence $p+q+r+s=1$, $k=2m_c$, and only the modes $U_{2m_c}$, $V_{2m_c}$ of $\Phi$ enter.

Consider the $\Phi_1$ term of slot $j$, $\alpha_{1,j}\ne0$.
The reduced product $(Du_c)^{\alpha_1-e_j}(Dv_c)^{\alpha_2}$ has total degree one, so the resonant choice $p+q+r+s=1$ selects the single fully conjugated monomial $\bar{z}_1^{\,|\alpha_1|-1}\bar{z}_2^{\,|\alpha_2|}$ with sign $(-1)^{\mathfrak{m}_o(\alpha_1-e_j)+\mathfrak{m}_o(\alpha_2)}=(-1)^{\mathfrak{m}_o(\alpha_1)+\mathfrak{m}_o(\alpha_2)+j}$.
Collecting this with the multiplicity $\alpha_{1,j}$ from~\eqref{product_rule}, the factor $(2im_c)^j$ from $\partial_x^j\Phi_1$ at $k=2m_c$, and the prefactor $(im_c)^{\mathfrak{m}_t(\alpha_1)-j+\mathfrak{m}_t(\alpha_2)}$, the identity $2^j(-1)^j=(-2)^j$ and Lemma~\ref{lem:parity_sign} give the scalar weight
\[
(-im_c)^{\mathfrak{m}_t(\alpha_1+\alpha_2)}\,(-2)^j\,\alpha_{1,j}.
\]
Summing over $j$, multiplying by $2\pi\langle\mathbf{a}_{\alpha_1,\alpha_2},\mathbf{q}_{m_c,n}^*\rangle_{\mathbb{C}^2}$ from the projection, and recalling~\eqref{sigma_n} and~\eqref{ABalpha}, the $\Phi_1$ term contributes exactly $A_{\alpha_1,\alpha_2,n,1}\,\bar{z}_1^{\,|\alpha_1|-1}\bar{z}_2^{\,|\alpha_2|}\,U_{2m_c}$; symmetrically, the $\Phi_2$ term contributes $A_{\alpha_1,\alpha_2,n,2}\,\bar{z}_1^{\,|\alpha_1|}\bar{z}_2^{\,|\alpha_2|-1}\,V_{2m_c}$.
The preceding steps yield:
\begin{proposition}\label{prop:cross_quad}
The contribution to $\mathcal{C}_{n,2}$ from the monomial $\mathbf{a}_{\alpha_1,\alpha_2}(Du_c)^{\alpha_1}(Dv_c)^{\alpha_2}$ with $|\alpha_1|+|\alpha_2|=2$, linearised in $(\Phi_1,\Phi_2)$ and projected onto $e_{m_c,n}^*$, is
\[
\begin{cases}
A_{\alpha_1,\alpha_2,n,1}\,\bar{z}_1\,U_{2m_c} & |\alpha_1|=2, \\
A_{\alpha_1,\alpha_2,n,1}\,\bar{z}_2\,U_{2m_c}+A_{\alpha_1,\alpha_2,n,2}\,\bar{z}_1\,V_{2m_c} & |\alpha_1|=|\alpha_2|=1, \\
A_{\alpha_1,\alpha_2,n,2}\,\bar{z}_2\,V_{2m_c} & |\alpha_2|=2,
\end{cases}
\]
with $A_{\alpha_1,\alpha_2,n,j}$ as in~\eqref{ABalpha}.
\end{proposition}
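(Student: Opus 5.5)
The proposition is essentially a bookkeeping consequence of the computation preceding it, so I will assemble it from that computation term by term. I fix a quadratic monomial $\mathbf a_{\alpha_1,\alpha_2}(Du_c)^{\alpha_1}(Dv_c)^{\alpha_2}$ with $|\alpha_1|+|\alpha_2|=2$. I then split its linearisation~\eqref{C_n} into the $\Phi_1$ part, governed by~\eqref{product_rule}, and the analogous $\Phi_2$ part. The $\Phi_1$ part is present only if $|\alpha_1|\ge1$, and the $\Phi_2$ part only if $|\alpha_2|\ge1$. This immediately gives the three-case structure of the statement: only $U$ when $|\alpha_1|=2$, both when $|\alpha_1|=|\alpha_2|=1$, and only $V$ when $|\alpha_2|=2$.

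Next I fix a slot $j$ with $\alpha_{1,j}\neq0$. I insert the Fourier series $\partial_x^j\Phi_1=\sum_k(ik)^jU_ke^{ikx}$ and the binomial expansions of $(Du_c)^{\alpha_1-e_j}$ and $(Dv_c)^{\alpha_2}$, then integrate against $e^{-im_cx}$. The resonance condition derived above forces $k=2m_c(p+q+r+s)$. Here $k=0$ is excluded because $\Phi$ is mean-zero, and $k=m_c$ is excluded because $\Phi$ has no critical block. So $p+q+r+s=1$ and $k=2m_c$. Since the reduced product has degree one, this choice forces full conjugation of that single factor, giving $\bar z_1$ if the remaining factor is a $u$ and $\bar z_2$ if it is a $v$. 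In particular, when $|\alpha_1|=2$ the monomial is $\bar z_1^{\,|\alpha_1|-1}\bar z_2^{\,|\alpha_2|}=\bar z_1$, and when $|\alpha_1|=|\alpha_2|=1$ it is $\bar z_2$, matching the statement.

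The main step is tracking the scalar prefactor, and this is where sign errors are likeliest. The contributions are:
\begin{itemize}
\item $\alpha_{1,j}$ from the product rule;
\item $(2im_c)^j$ from the derivative of the second-harmonic mode;
\item $(im_c)^{\mathfrak m_t(\alpha_1)-j+\mathfrak m_t(\alpha_2)}$ from the critical derivatives;
\item $(-1)^{\mathfrak m_o(\alpha_1-e_j)+\mathfrak m_o(\alpha_2)}$ from conjugating the odd-order factor, noting that $\overline{(im_c)^l}$ contributes $(-1)^l$ only through the odd-derivative count;
\item $2\pi\langle\mathbf a_{\alpha_1,\alpha_2},\mathbf q^*_{m_c,n}\rangle$ from the projection.
\end{itemize}
Lowering slot $j$ changes $\mathfrak m_o$ by $j \bmod 2$, so the sign is $(-1)^{\mathfrak m_o(\alpha_1)+\mathfrak m_o(\alpha_2)+j}$. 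The powers of $im_c$ combine to $(im_c)^{\mathfrak m_t(\alpha_1+\alpha_2)}2^j$. Using $2^j(-1)^j=(-2)^j$, the remaining sign is $(-1)^{\mathfrak m_o(\alpha_1+\alpha_2)}$, exactly as in~\eqref{ABalpha}. Lemma~\ref{lem:parity_sign} lets me rewrite this equivalently as $(-im_c)^{\mathfrak m_t}$ when comparing with the displayed weight.

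Finally, summing over $j$ produces $\sum_p(-2)^p\alpha_{1,p}$, and together with~\eqref{sigma_n} this yields $A_{\alpha_1,\alpha_2,n,1}\bar z_1^{\,|\alpha_1|-1}\bar z_2^{\,|\alpha_2|}U_{2m_c}$. The $\Phi_2$ term is handled by the symmetric argument with $\alpha_2$ and $V_{2m_c}$. Specialising the exponents in each of the three cases gives the displayed formula.
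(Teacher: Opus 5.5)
Your proposal is correct and follows essentially the same argument as the paper. Both use the resonance $k=2m_c(p+q+r+s)$ with $k=0$ excluded by the mean-zero restriction, select the single fully conjugated monomial $\bar z_1^{\,|\alpha_1|-1}\bar z_2^{\,|\alpha_2|}$, and collect the same factors: $\alpha_{1,j}$, $(2im_c)^j$, $(im_c)^{\mathfrak m_t(\alpha_1+\alpha_2)-j}$, the sign $(-1)^{\mathfrak m_o(\alpha_1)+\mathfrak m_o(\alpha_2)+j}$, and $2\pi\langle\mathbf a_{\alpha_1,\alpha_2},\mathbf q^*_{m_c,n}\rangle$.

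Two small points. Excluding $k=m_c$ is superfluous, since $k\in\{0,2m_c\}$ automatically. Also, your bookkeeping reaches the form $(-1)^{\mathfrak m_o}(im_c)^{\mathfrak m_t}$ of \eqref{ABalpha} directly, so you do not need Lemma~\ref{lem:parity_sign}; the paper uses it only to pass through the intermediate weight $(-im_c)^{\mathfrak m_t}$.
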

Summing over all monomials with $|\alpha_1|+|\alpha_2|=2$ and collecting by amplitude using~\eqref{CUV} gives
\begin{equation}\label{Ck2}
\mathcal{C}_{n,2}
= \bar{z}_1\bigl(U_{2m_c}C_{U,1,n}+V_{2m_c}C_{V,1,n}\bigr)
+ \bar{z}_2\bigl(U_{2m_c}C_{U,2,n}+V_{2m_c}C_{V,2,n}\bigr).
\end{equation}

\subsection{Center Manifold Coefficients}\label{sec:cm_coefficients}
We determine $U_{2m_c}$ and $V_{2m_c}$ in~\eqref{phi} using the Lyapunov--Perron representation of the center manifold; see~\cite{ptd,csengul2018dynamic}.
Throughout this subsection we evaluate at $\lambda=\lambda_c$ and drop the subscript $\lambda$; for nearby $\lambda$ the cubic coefficients differ by $O(\abs{\lambda-\lambda_c})$, which is absorbed in the higher-order terms of~\eqref{reduced_system}.

Write $e_n = e_{m_c,n}$, $\beta_1 = \beta_{m_c,1} = \overline{\beta_{m_c,2}}$, and let $\mathcal{J}$ be the restriction of $L$ to the center subspace, so that
\[
e^{\tau \mathcal{J}} \psi_c
= e^{\tau \beta_1} A e_1 + e^{\tau \overline{\beta_1}} B e_2 + \text{c.c.}
\]
Let $G_2$ denote the symmetric bilinear form associated with the quadratic part of $G$.
By \cite[Ch.~6]{ptd}, the center manifold satisfies
\begin{equation}\label{comp_proof_1}
\Phi = \int_{-\infty}^0 e^{-\tau L}\,
P_{\text{stable}}\, G_2(e^{\tau \mathcal{J}} \psi_c,\,
e^{\tau \mathcal{J}} \psi_c)\, d\tau + O(\|\psi_c\|^3),
\end{equation}
near the origin.
Expanding the bilinear form, the integrand is a sum over the amplitude monomials $A^2$, $AB$, $B^2$, $A\bar A$, $A\bar B$, $B\bar B$ and their conjugates, the monomial $XY$ carrying the time factor $e^{\tau\mu_{XY}}$ with $\mu_{XY}$ the sum of the corresponding eigenvalues:
\[
\mu_{A^2}=\mu_{A\bar B}=2\beta_1, \qquad
\mu_{B^2}=2\overline{\beta_1}, \qquad
\mu_{AB}=\mu_{A\bar A}=\mu_{B\bar B}=\beta_1+\overline{\beta_1}=0,
\]
using $\Re\beta_1=0$ at criticality.
Since $\Re\beta_{m,n}<0$ for $(m,n) \in \mathfrak{M}_{\mathrm{stable}}$ by~\eqref{PES}, each integral converges there and evaluates to a resolvent, $\int_{-\infty}^0 e^{-\tau(L-\mu_{XY})}\,d\tau=(\mu_{XY}-L)^{-1}$.
Hence
\[
\Phi = \Phi_{AA}A^2+\Phi_{AB}AB+\Phi_{BB}B^2
+\Phi_{A\bar A}A\bar A+\Phi_{A\bar B}A\bar B
+\Phi_{B\bar B}B\bar B+\text{c.c.}+O(\|\psi_c\|^3),
\]
where
\[
\begin{aligned}
\Phi_{AA} &= (2\beta_1-L)^{-1}P_{\text{stable}}G_2(e_1,e_1),\\
\Phi_{A\bar B} &= (2\beta_1-L)^{-1}P_{\text{stable}}
  \bigl(G_2(e_1,\bar e_2)+G_2(\bar e_2,e_1)\bigr),\\
\Phi_{BB} &= (2\bar\beta_1-L)^{-1}P_{\text{stable}}G_2(e_2,e_2),\\
\Phi_{A\bar A} &= (-L)^{-1}P_{\text{stable}}G_2(e_1,\bar e_1),\\
\Phi_{AB} &= (-L)^{-1}P_{\text{stable}}
  \bigl(G_2(e_1,e_2)+G_2(e_2,e_1)\bigr),\\
\Phi_{B\bar B} &= (-L)^{-1}P_{\text{stable}}G_2(e_2,\bar e_2).
\end{aligned}
\]
Since $e_1$ and $e_2$ both carry frequency $m_c$, the products $G_2(e_1,\bar e_1)$, $G_2(e_2,\bar e_2)$, and $G_2(e_1,\bar e_2)+G_2(\bar e_2,e_1)$ each lie in the zero Fourier mode, which is absent from $\mathcal H$; their stable projections therefore vanish, so $\Phi_{A\bar A}=\Phi_{A\bar B}=\Phi_{B\bar B}=0$.

The remaining products lie entirely in the $2m_c$ Fourier mode, with $\mathbb{C}^2$-valued coefficients
\[
\widehat{\mathbf g}_{AA} = \sum_{|\alpha_1|+|\alpha_2|=2}\mathbf{a}_{\alpha_1,\alpha_2}\,
u_{m_c,1}^{|\alpha_1|}\,v_{m_c,1}^{|\alpha_2|}\,(im_c)^{\mathfrak{m}_t(\alpha_1+\alpha_2)},
\quad
\widehat{\mathbf g}_{AB} = \sum_{|\alpha_1|+|\alpha_2|=2}\mathbf{a}_{\alpha_1,\alpha_2}\,
\mathcal{Q}_{\alpha_1,\alpha_2}\,(im_c)^{\mathfrak{m}_t(\alpha_1+\alpha_2)},
\]
with $\mathcal{Q}_{\alpha_1,\alpha_2}$ the polarization~\eqref{Q_AB}, and $\widehat{\mathbf g}_{BB}$ obtained from $\widehat{\mathbf g}_{AA}$ by $(u_{m_c,1},v_{m_c,1})\mapsto(u_{m_c,2},v_{m_c,2})$.
On this block the resolvents act as the matrices $(\mu-M_{2m_c})^{-1}$, well-defined for $\mu\in\{2\beta_1,2\bar\beta_1,0\}$ since $\Re\beta_{2m_c,k}<0$, and the $2m_c$-component of $\Phi$ is
\begin{equation}\label{Phi_coeff_matrix}
\begin{bmatrix}U_{2m_c}\\[2pt]V_{2m_c}\end{bmatrix}
=(2\beta_1 I_2-M_{2m_c})^{-1}\widehat{\mathbf g}_{AA}\,A^2
-M_{2m_c}^{-1}\widehat{\mathbf g}_{AB}\,AB
+(2\bar\beta_1 I_2-M_{2m_c})^{-1}\widehat{\mathbf g}_{BB}\,B^2.
\end{equation}
We stress that~\eqref{Phi_coeff_matrix} requires no diagonalizability of $M_{2m_c}$.
In particular, the quadratic part of the nonlinearity feeds into the cubic coefficients only through the critical mode $m_c$ and the second harmonic $2m_c$; no other mode enters at this order.
The $A^2$ and $AB$ coefficient vectors in~\eqref{Phi_coeff_matrix} are exactly $\mathbf h_{AA}$ and $\mathbf h_{AB}$ from~\eqref{eq:Phi_vectors}.
Substituting them directly into~\eqref{Ck2} gives the universal coefficient formula~\eqref{eq:c_hat_matrix}.

When $M_{2m_c}$ has two distinct eigenvalues, expanding $\widehat{\mathbf g}_{AA}$, $\widehat{\mathbf g}_{AB}$, $\widehat{\mathbf g}_{BB}$ in the eigenbasis $\{\mathbf{q}_{2m_c,1},\mathbf{q}_{2m_c,2}\}$ via~\eqref{orth} gives
\begin{equation}\label{UV_2mc}
\begin{aligned}
& U_{2m_c}=\sum_{k=1}^2\Phi_{2m_c,k}\,u_{2m_c,k},\qquad
V_{2m_c}=\sum_{k=1}^2\Phi_{2m_c,k}\,v_{2m_c,k},\\
& \Phi_{2m_c,k}=\Phi_{AA,2m_c,k}A^2+\Phi_{AB,2m_c,k}AB+\Phi_{BB,2m_c,k}B^2,
\end{aligned}
\end{equation}
with scalar coefficients, in the notation~\eqref{sigma_n},
\begin{equation}\label{Phi_coeff_derived}
\begin{aligned}
\Phi_{AA,2m_c,n} &= \frac{1}{2\beta_1-\beta_{2m_c,n}}
\sum_{|\alpha_1|+|\alpha_2|=2}\sigma_{2m_c,n}(\alpha_1,\alpha_2)\,
u_{m_c,1}^{|\alpha_1|}\,v_{m_c,1}^{|\alpha_2|},\\
\Phi_{BB,2m_c,n} &= \frac{1}{2\bar\beta_1-\beta_{2m_c,n}}
\sum_{|\alpha_1|+|\alpha_2|=2}\sigma_{2m_c,n}(\alpha_1,\alpha_2)\,
u_{m_c,2}^{|\alpha_1|}\,v_{m_c,2}^{|\alpha_2|},\\
\Phi_{AB,2m_c,n} &= \frac{-1}{\beta_{2m_c,n}}
\sum_{|\alpha_1|+|\alpha_2|=2}\sigma_{2m_c,n}(\alpha_1,\alpha_2)\,
\mathcal{Q}_{\alpha_1,\alpha_2},
\end{aligned}
\end{equation}
in agreement with~\eqref{Phi_coeff}; the denominators are nonzero since $\Re(2\beta_1)=\Re(2\bar\beta_1)=0$ and $\Re\beta_{2m_c,n}<0$.
The modal sum represented by~\eqref{Phi_coeff_derived} extends regularly to exceptional parameter values where $M_{2m_c}$ fails to be diagonalizable, since~\eqref{Phi_coeff_matrix} remains regular there.
The $\Phi_{BB,2m_c,k}B^2$ term is non-resonant and is eliminated by the near-identity transformation.

In the distinct-eigenvalue case, substituting \eqref{UV_2mc} into \eqref{Ck2} and passing to $(A,B)$ via \eqref{z12ab} gives
\[
\mathcal{C}_{n,2} = \hat c_{n,1}A|A|^2+\hat c_{n,2}A|B|^2
+\hat c_{n,3}B|A|^2+\hat c_{n,4}B|B|^2
+\hat c_{n,5}\bar A B^2+\hat c_{n,6}A^2\bar B.
\]
The last four monomials are non-resonant and are eliminated by the near-identity transformation; the surviving resonant contributions $\hat c_{n,1}$, $\hat c_{n,2}$ are given by~\eqref{c_hat_k}.
Combined with the self-interaction of Section~\ref{sec:self-interaction} and the $O(2)$-equivariant reduction of Section~\ref{sec:normal_form}, this yields the normal form~\eqref{reduced_system} with $\zeta = \hat{s}_{1,1}+\hat{c}_{1,1}$ and $\xi = \hat{s}_{1,2}+\hat{c}_{1,2}$ as stated in Theorem~\ref{thm:main}.

\subsection{Normal Form of the Reduced System}\label{sec:normal_form}
The normal form structure is standard \cite{crawford1991symmetry,guckenheimer1983nonlinear,haragus2010local,elphick1987simple,nayfeh2011method}; the three steps are recorded below.

\emph{Step 1: The Pre-Normal-form Reduced System.} After center manifold reduction, the dynamics on the four-dimensional center space is governed by
\begin{equation}\label{eq:prenf}
\begin{aligned}
\frac{dA}{dt} &= i\omega A + P_1(A,B,\overline{A},\overline{B})
               + \text{h.o.t.}, \\
\frac{dB}{dt} &= -i\omega B + P_2(A,B,\overline{A},\overline{B})
               + \text{h.o.t.},
\end{aligned}
\end{equation}
where $\omega = \Im\beta_1 > 0$ and $P_1$, $P_2$ are
cubic polynomials in $A,B,\overline{A},\overline{B}$.

\emph{Step 2: Elimination of Non-Resonant Terms.} We seek a near-identity transformation
\begin{equation}\label{eq:nf_transform}
A = Z_1 + h_1(Z_1,Z_2,\overline{Z_1},\overline{Z_2}), \qquad
B = Z_2 + h_2(Z_1,Z_2,\overline{Z_1},\overline{Z_2}),
\end{equation}
with $h_1$, $h_2$ cubic, such that~\eqref{eq:prenf} takes the form
\begin{equation}\label{eq:nf_intermediate}
\begin{aligned}
\frac{dZ_1}{dt} &= i\omega Z_1
  + Z_1(\zeta|Z_1|^2 + \xi|Z_2|^2) + \text{h.o.t.}, \\
\frac{dZ_2}{dt} &= -i\omega Z_2
  + Z_2(\xi'|Z_1|^2 + \zeta'|Z_2|^2) + \text{h.o.t.}
\end{aligned}
\end{equation}
The homological operator acts diagonally on cubic monomials,
\begin{equation}\label{eq:homological_operator}
\mathcal{L}_{i\omega}\bigl(Z_1^p Z_2^q \overline{Z_1}^{\,r} \overline{Z_2}^{s}\bigr) = i\omega(p - q - r + s - 1)\, Z_1^p Z_2^q \overline{Z_1}^{\,r} \overline{Z_2}^{s},
\end{equation}
so only resonant monomials ($p-q-r+s=1$) survive.
With translation equivariance ($p+q-r-s=1$), the resonant cubic terms are $Z_1|Z_1|^2$ and $Z_1|Z_2|^2$ in $\dot Z_1$, and symmetrically in $\dot Z_2$.
Every resonant monomial has odd degree, so the normal form carries only odd-order terms and the remainder in~\eqref{reduced_system} is $O(\abs{Z}^5)$.
The coefficients $\zeta,\xi,\xi',\zeta'$ are the resonant coefficients of $P_1,P_2$ and are unchanged by the transformation.

\emph{Step 3: $O(2)$ Symmetry Constraints.} Because $L$ and $G$ commute with the $O(2)$ action, the center manifold may be chosen $O(2)$-equivariant \cite[Theorem~3.13]{haragus2010local}; the reduced field~\eqref{eq:prenf} is then equivariant, and the near-identity transformation preserves this, forcing the normal form~\eqref{eq:nf_intermediate} to be equivariant \cite{golubitsky1985hopf,crawford1991symmetry}.
In the normalization of Lemma~\ref{lem:crit_norm}, the expansion~\eqref{psi_c} of the center part reads
\[
\psi_c = Ae_{m_c,1} + \overline{A}e_{-m_c,1}
       + Be_{m_c,2} + \overline{B}e_{-m_c,2}.
\]
Applying $R$ and using Lemma~\ref{lem:crit_norm} together with $R^2=\mathrm{id}$ gives $R\psi_c = \gamma\overline{B}\,e_{m_c,1} + \gamma\overline{A}\,e_{m_c,2} + \mathrm{c.c.}$, so the reflection acts on the amplitude coordinates as $R:(A,B)\mapsto(\gamma\overline{B},\gamma\overline{A})$, and after the near-identity transformation $(A,B)\to(Z_1,Z_2)$ as
\begin{equation}\label{eq:kappa}
\kappa: (Z_1,Z_2)\mapsto(\gamma\overline{Z_2},\,\gamma\overline{Z_1}).
\end{equation}
Equivariance requires that if $(Z_1,Z_2)$ satisfies~\eqref{eq:nf_intermediate} then so does $(\gamma\overline{Z_2},\gamma\overline{Z_1})$.
Substituting $(Z_1,Z_2)\mapsto(\gamma\overline{Z_2},\gamma\overline{Z_1})$ into~\eqref{eq:nf_intermediate} and conjugating, the unimodular factor $\gamma$ cancels, and comparison with~\eqref{eq:nf_intermediate} yields
\begin{equation}\label{eq:O2_constraints}
\zeta' = \overline{\zeta}, \qquad \xi' = \overline{\xi},
\end{equation}
and substituting~\eqref{eq:O2_constraints} into~\eqref{eq:nf_intermediate} gives the $O(2)$-equivariant normal form~\eqref{reduced_system}.

\section{Concluding Remarks}\label{sec:outlook}

We close by recording several directions in which the present results extend.

The first is the diffusive (sideband) stability of the bifurcated wave families \cite{mielke1992ginzburg,sukhtayev2018diffusive,wheeler2026convective}.
The sideband criteria involve the ratios $\Im\zeta/\Re\zeta$ and $\Im\xi/\Re\xi$, so the closed forms derived here make them directly evaluable across the polynomial class.
The second ratio is singular exactly where $\xiR$ vanishes, as it does identically for the stress--strain family~\eqref{system2}; what structure of the flux forces that vanishing, and whether it survives at quintic order, we leave open.

A second direction is to relax Assumption~\ref{cond:D}: we expect the resolvent estimate of Theorem~\ref{thm:resolvent} to persist whenever $\tr M_m$ attains the full order $2m_L$, in which case the spectral gap condition~(v) is the only remaining spectral hypothesis.

A third direction widens the nonlinear class.
The formulas of Theorem~\ref{thm:main} are derived for two-component systems with conservative polynomial nonlinearities.
Their extension to $n$-component systems, where further transverse modes can enter the bifurcation \cite{barker2021transverse}, to non-conservative $O(2)$-equivariant nonlinearities, and to the quasilinear case $r_G=2m_L$ \cite[Theorem~2.20]{haragus2010local}, remains open.
For reaction-diffusion systems with $n\ge3$ components, Villar-Sep\'{u}lveda and Champneys \cite{villarsepulveda2024amplitude} treat this case by multiple-scale asymptotics through fifth order; an $n$-component version of our framework would recover those coefficients in closed form.

A further direction concerns secondary instabilities of the bifurcating waves.
The relevant Floquet exponents are expressible through $\zeta$ and $\xi$ in the cubic $O(2)$-Hopf normal form \cite{vangils1986hopf}, so Theorem~\ref{thm:main} locates their onset within the reduced system explicitly.
The direction of the secondary bifurcation requires the quintic normal form coefficients, and a full description of the resulting modulated traveling waves requires a center manifold reduction at the traveling wave \cite{amdjadiaston1996}.
More generally, the multi-index reduction of Section~\ref{sec:proof} is not tied to cubic order: the same machinery extends to the quintic and higher normal form coefficients that govern transitions when the cubic terms degenerate \cite{crawford1988degenerate} or when higher-derivative nonlinearities enter \cite{csengul2024first,csengul2025effect}.

Global polynomiality is likewise inessential: condition~\ref{S1} may be replaced by $G\in C^k(\mathcal Z,\mathcal Y)$, $k\ge3$, with finite-monomial quadratic and cubic jets obeying~\ref{S2}--\ref{S4}, at the cost of assuming $C^k$ regularity rather than inheriting it from polynomiality.

\section*{Code availability}
A Python implementation of all formulas in Theorem~\ref{thm:main} is provided by the companion package \texttt{o2sym}, available at \url{https://github.com/taylansengul/O2hopf} (release~v0.4.0), with all releases archived at \url{https://doi.org/10.5281/zenodo.20584398}.
It implements the general multi-index machinery of Section~\ref{sec:proof} and agrees with the hand-simplified closed forms of the applications of Section~\ref{sec:app1} to $10^{-14}$ relative error; conservativity (condition~\ref{S3} of Assumption~\ref{ass:S}) is verified via the Euler--Lagrange criterion of Remark~\ref{rem:euler}.
The module \texttt{o2sym.coverage} reproduces the region-coverage claims of Subsection~\ref{sec:app1_TW}.
The module \texttt{o2sym.simulation} provides the pseudo-spectral integrator behind Figure~\ref{fig:app1_sim}: an integrating-factor Runge--Kutta scheme whose linear part is advanced by the exact closed-form $2\times2$ matrix exponential of the Fourier symbol.
The script \texttt{examples/bilinear\_family.py} prints these diagnostics and regenerates Figure~\ref{fig:app1_sim}.
The curves of Figure~\ref{fig:app1_phase} are drawn directly from the coefficients of $\zetaR$ and $\zetaR\pm\xiR$ as quadratics in $g_{22}$; the package test suite checks them against the computed forms $A$ and $B$, and records an off-diagonal accidental zero.
The code requires Python~3.10+ and NumPy; regenerating the figures additionally uses Matplotlib.
The code is released under the MIT license.

\appendix

\section{Proofs of the Spectral and Symmetry Preliminaries}\label{app:spectral}

This appendix collects the proofs of the results stated in Section~\ref{sec:main}: Lemma~\ref{lem:syr_G} (admissible monomials), Theorem~\ref{eigenvalue_theorem} (eigenvalues and eigenvectors of $L$), Lemma~\ref{lem:adjoint_eigen} (adjoint eigenvectors), Lemma~\ref{lem:PES} (spectral configuration at criticality) and Theorem~\ref{thm:spectral_gap_char} (spectral gap characterization).

\begin{proof}[Proof of Lemma~\ref{lem:syr_G}]
Translation equivariance is automatic since $G$ has constant coefficients.
Under $R$, each factor $\partial_x^j u$ flips sign iff $j$ is odd and each $\partial_x^j v$ iff $j$ is even, so
\[
(Du)^{\alpha_1}(Dv)^{\alpha_2} \,\mapsto\, (-1)^{\mathfrak{m}_o(\alpha_1)+\mathfrak{m}_e(\alpha_2)}\,\bigl[(Du)^{\alpha_1}(Dv)^{\alpha_2}\bigr](-x).
\]
Since $RG=GR$ means $g_1$ is even and $g_2$ is odd under $R$, comparing coefficients monomial by monomial gives~\eqref{syr_G}.
\end{proof}

\begin{proof}[Proof of Theorem~\ref{eigenvalue_theorem}]
Since $\beta_{m,1}\beta_{m,2}=\det M_m\sim b_{1,2m_L}b_{2,2m_L}\,m^{4m_L}\ne0$ by~\eqref{eq:condD} while $\beta_{m,1}+\beta_{m,2}=\tr M_m=O(m^{2m_L})$, we have $|\beta_{m,n}|\to\infty$.
Fix any $\lambda_0\notin\{\beta_{m,n}\}$; then
\[
\det(\lambda_0I_2-M_m)=\lambda_0^2-\lambda_0\tr M_m+\det M_m
\sim b_{1,2m_L}b_{2,2m_L}m^{4m_L},
\]
whereas the entries of $\operatorname{adj}(\lambda_0I_2-M_m)$ are $O(m^{2m_L})$; hence $(\lambda_0I_2-M_m)^{-1}=O(m^{-2m_L})$ as $|m|\to\infty$.
The finitely many remaining blocks are invertible by the choice of $\lambda_0$, so the blockwise inverse is uniformly bounded and $\lambda_0\in\rho(L)$.
Moreover, the resolvent maps $\mathcal H$ boundedly into $\mathcal H^{2m_L}$ and is compact after composition with the compact embedding $\mathcal H^{2m_L}\hookrightarrow\mathcal H$.
Hence $\sigma(L)$ consists of eigenvalues only \cite[Thm.~III.6.29]{kato1966perturbation}, and every eigenvalue is some $\beta_{m,n}$ because the Fourier blocks are invariant.

By~\eqref{Mm}, $\tr M_m$ and $\det M_m$ are real and even in $m$, so the characteristic equation~\eqref{char} is the same for $\pm m$ and has real coefficients; its roots are therefore both real or a conjugate pair, and labeling them so that $\beta_{m,n}=\overline{\beta_{-m,n}}$ gives the two cases.
\end{proof}

\begin{proof}[Proof of Lemma~\ref{lem:adjoint_eigen}]
Fix a block with $\beta_{m,1}\ne\beta_{m,2}$.
Since $\sigma(L^*)=\overline{\sigma(L)}$, we label the adjoint eigenpairs by $L^*e^*_{m,n}=\overline{\beta_{m,n}}\,e^*_{m,n}$, which on the Fourier block reads $M_m^*\,\mathbf q^*_{m,n}=\overline{\beta_{m,n}}\,\mathbf q^*_{m,n}$.
Conjugating~\eqref{Mm} flips the sign of exactly the odd powers of $m$, so $\overline{M_m}=M_{-m}$ and hence $M_m^*=\overline{M_m}^{\,T}=M_{-m}^T$; since $\overline{\beta_{m,n}}=\beta_{-m,n}$ by Theorem~\ref{eigenvalue_theorem}, this is~\eqref{Mm_adjoint_eigen}.
For~\eqref{orth}, modes with $m\neq m'$ are orthogonal in $L^2$; for $m=m'$, $n\neq n'$, $\beta_{m,n}\langle\mathbf q_{m,n},\mathbf q^*_{m,n'}\rangle=\langle M_m\mathbf q_{m,n},\mathbf q^*_{m,n'}\rangle=\langle\mathbf q_{m,n},M_m^*\mathbf q^*_{m,n'}\rangle=\beta_{m,n'}\langle\mathbf q_{m,n},\mathbf q^*_{m,n'}\rangle$, so the pairing vanishes when $\beta_{m,n}\neq\beta_{m,n'}$; and the same simplicity makes $\langle\mathbf q_{m,n},\mathbf q^*_{m,n}\rangle\neq0$, so the normalization $\langle\mathbf q_{m,n},\mathbf q^*_{m,n}\rangle=1/2\pi$ can be imposed.
\end{proof}

\begin{proof}[Proof of Lemma~\ref{lem:PES}]
Conditions~(i) and~(ii) give $\tr M_{m_c}(\lambda_c) = 0$ and $\det M_{m_c}(\lambda_c) > 0$, so the characteristic equation~\eqref{char} yields
\[
\beta_{m_c,1}(\lambda_c) = i\omega_c, \qquad \beta_{m_c,2}(\lambda_c) = -i\omega_c,
\qquad \omega_c := \sqrt{\det M_{m_c}(\lambda_c)} > 0,
\]
proving the first two lines of~\eqref{PES}.
Since $\tr^2 M_{m_c} - 4\det M_{m_c} < 0$ at $\lambda_c$ by~(i)--(ii), the roots of~\eqref{char} at $m=m_c$ remain a simple non-real conjugate pair for $\lambda$ near $\lambda_c$; they are therefore smooth in $\lambda$, with $\beta_{m_c,2}(\lambda) = \overline{\beta_{m_c,1}(\lambda)}$.
Differentiating $\beta_{m_c,1}(\lambda) + \beta_{m_c,2}(\lambda) = \tr M_{m_c}(\lambda)$ then gives $2\,\frac{d}{d\lambda}\Re\beta_{m_c,1}(\lambda_c) = \frac{d}{d\lambda}\tr M_{m_c}(\lambda_c)\ne 0$ by~(iii).
Since $\tr M_m$ and $\det M_m$ depend only on $m^2$, the same holds for $-m_c$, establishing the third line.

For the last line, fix $(m,n)\in\mathfrak{M}_{\mathrm{stable}}$; then $m\ne\pm m_c$, so $\tr M_m(\lambda_c)<0$ and $\det M_m(\lambda_c)>0$ by~(iv) and~(ii), and both roots of~\eqref{char} have negative real part (if real, both are negative since their sum is negative and their product positive; if complex, their common real part is $\tfrac12\tr M_m<0$). Hence $\Re\beta_{m,n}(\lambda_c) < 0$ for all $(m,n)\in\mathfrak{M}_{\mathrm{stable}}$.
By~\eqref{Mm}, $\tr M_m(\lambda_c)$ is a polynomial in $m^2$; it is nonconstant, since a constant value would equal $\tr M_{m_c}(\lambda_c)=0$ by~(i), contradicting~(iv).
Condition~(iv) then forces its leading nonvanishing coefficient to be strictly negative, so $\tr M_m(\lambda_c) \to -\infty$ as $|m|\to\infty$.
By condition~(v), there exist $\eta > 0$ and $m_0 \ge 1$ such that $\det M_m(\lambda_c) \ge \eta\,|\tr M_m(\lambda_c)|$ and $|\tr M_m(\lambda_c)| \ge 4\eta$ for all $|m|\ge m_0$.
Fix $|m|\ge m_0$ and write $\tau := \tr M_m(\lambda_c)$, $\delta := \det M_m(\lambda_c)$.

If $\tau^2 - 4\delta < 0$, the eigenvalues are complex conjugates with $\Re\beta_{m,n} = \tau/2 \le -2\eta$.

If $\tau^2 - 4\delta \ge 0$, the larger root is $\beta_+ = (\tau + \sqrt{\tau^2 - 4\delta})/2$.
From $\delta \ge \eta|\tau|$ and $\sqrt{1-x} \le 1 - x/2$ for $x\in[0,1]$,
\[
\sqrt{\tau^2 - 4\delta} \le |\tau|\sqrt{1 - 4\eta/|\tau|} \le |\tau| - 2\eta,
\]
so $\beta_+ \le -\eta$.

In either case, $\Re\beta_{m,n}(\lambda_c) \le -\eta$ for all $(m,n)\in\mathfrak{M}_{\mathrm{stable}}$ with $|m|\ge m_0$.
For the finitely many remaining stable modes, $\Re\beta_{m,n}(\lambda_c) < 0$ by the negativity step.
Taking the supremum,
\[
\sup_{(m,n)\in\mathfrak{M}_{\mathrm{stable}}}\Re\beta_{m,n}(\lambda_c) < 0.
\]
The first three lines locate $\Re\beta_{m,n}(\lambda_c) = 0$ precisely on $\mathfrak{M}_{\mathrm{center}}$, so the center subspace is exactly four-dimensional.
\end{proof}

\begin{proof}[Proof of Theorem~\ref{thm:spectral_gap_char}]
\emph{Sufficiency} is established in Lemma~\ref{lem:PES}.

\emph{Necessity.}
Suppose the spectral gap holds, i.e.\ $\sup_{(m,n)\in\mathfrak{M}_{\mathrm{stable}}}\Re\beta_{m,n}(\lambda_c)<0$, and fix $\rho > 0$ with $\Re\beta_{m,n}(\lambda_c) \le -\rho$ for all $(m,n)\in\mathfrak{M}_{\mathrm{stable}}$.
By~\eqref{Mm}, $\tr M_m(\lambda_c)$ is a polynomial in $m^2$; it is nonconstant (a constant value would vanish by~(i), contradicting~(iv)), and condition~(iv) forces its leading nonvanishing coefficient to be strictly negative, so $\tr M_m(\lambda_c) \to -\infty$ as $|m|\to\infty$.
In particular, $|\tr M_m(\lambda_c)| \ge 2\rho$ for all sufficiently large $|m|$.
Fix such $m$ and write $\tau := \tr M_m(\lambda_c)$, $\delta := \det M_m(\lambda_c)$.

\emph{Case 1: $\tau^2 - 4\delta < 0$.}
The eigenvalues are complex conjugates and $\delta > \tau^2/4$, hence $\delta/|\tau| > |\tau|/4$.

\emph{Case 2: $\tau^2 - 4\delta \ge 0$.}
The eigenvalues are real and the larger one is $\beta_+ = (\tau + \sqrt{\tau^2 - 4\delta})/2$.
Since $\tau = -|\tau|$, the inequality $\beta_+ \le -\rho$ becomes $\sqrt{\tau^2 - 4\delta} \le |\tau| - 2\rho$.
The right-hand side is nonneg\-ative because $|\tau| \ge 2\rho$ for all sufficiently large $|m|$ (established above), so squaring is valid and rearranging yields
\[
\frac{\delta}{|\tau|} \ge \rho - \frac{\rho^2}{|\tau|}.
\]
Hence for all sufficiently large $|m|$, $\delta/|\tau| \ge \min\bigl(|\tau|/4,\ \rho - \rho^2/|\tau|\bigr)$, and since $|\tau|\to\infty$ the right-hand side tends to $\rho$; therefore $\liminf_{|m|\to\infty} \delta/|\tau| \ge \rho > 0$, which is condition~(v).
\end{proof}

\section{Stability Analysis of the Reduced System}
\label{sec:stability_analysis}

The stability analysis of the reduced system given by Proposition~\ref{prop:classification} is standard \cite{crawford1991symmetry}, which we briefly recall for completeness.

In polar coordinates $Z_j = r_j e^{i\theta_j}$, the system~\eqref{reduced_system} yields the amplitude equations
\begin{equation}\label{polar_reduced}
\begin{aligned}
\frac{dr_1}{dt} &= \Re\beta_1\, r_1 + r_1(\zetaR r_1^2 + \xiR r_2^2)
                  + \text{h.o.t.}, \\
\frac{dr_2}{dt} &= \Re\beta_1\, r_2 + r_2(\xiR r_1^2 + \zetaR r_2^2)
                  + \text{h.o.t.}.
\end{aligned}
\end{equation}
where $\beta_1 = \beta_{m_c,1}$.

$\mathrm{TW}$ is stable if and only if its Jacobian eigenvalues
\[
  \mu_1 = -2\Re\beta_1, \qquad
  \mu_2 = \Re\beta_1\,\dfrac{\zetaR-\xiR}{\zetaR}, 
\]
are both negative.
$\mathrm{SW}$ is stable if and only if its Jacobian has negative trace and positive determinant which are given by
\[
  \tr J(\mathrm{SW}) = -\dfrac{4\zetaR\,\Re\beta_1}{\zetaR+\xiR}, \qquad
  \det J(\mathrm{SW}) = \dfrac{4(\zetaR-\xiR)(\Re\beta_1)^2}{\zetaR+\xiR}.
\]
The complete classification is given in Table~\ref{table:signs}.

\section{Energy-Method Proof of the Resolvent Estimate Under a Stronger Hypothesis}\label{app:energy}
This appendix gives an alternative proof of the resolvent estimate of Theorem~\ref{thm:resolvent} by the energy method, under the stronger hypothesis~\eqref{eq:offdiag-hyp} on the off-diagonal blocks; it extends the model-specific estimate of \cite{yao20142} to the general class.
It is not used elsewhere in the paper; we include it to show what energy estimates can reach against Assumption~\ref{cond:D}.

\begin{assumption}[Subordinate off-diagonals]\label{ass:R}
The off-diagonal blocks of $L$ satisfy
\begin{equation}\label{eq:offdiag-hyp}
b_{1,2k+1} = b_{2,2k+1} = 0 \qquad \text{for all } 2k+1 > m_L,
\end{equation}
i.e.\ off-diagonal order at most $m_L$, versus $2m_L-1$ for Assumption~\ref{cond:D}.
\end{assumption}

\begin{proposition}\label{prop:energy-resolvent}
Under Assumptions~\ref{cond:D},~\ref{hopf_assumption}, and~\ref{ass:R}, the conclusion of Theorem~\ref{thm:resolvent} holds.
\end{proposition}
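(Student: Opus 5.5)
Because $L$ commutes with translations, it acts blockwise on Fourier modes. So the plan is to prove the estimate for each mode $m\ne0$ separately, with constants uniform in $m$, and then take the supremum over $m$ as in Step~1 of the proof of Theorem~\ref{thm:resolvent}. The difference from that proof is that the adjugate bound is replaced by an energy identity. Fix $|\tilde\omega|\ge\omega_0$ and $f\in\mathcal H$, and suppose $(i\tilde\omega-L)\psi=f$ with $\psi=(u,v)$. Take the $L^2$ inner product of the equation with $\psi$, and separately with $i\tilde\omega\psi$, or equivalently with suitable derivative multipliers. The aim is to control $|\tilde\omega|\,\|\psi\|$ by $\|f\|$.

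\textbf{Step 1: real part of the energy identity.} The diagonal blocks $L_{11},L_{22}$ are even-order operators with real coefficients, so they are self-adjoint. Their leading parts give
\[
-\Re\langle L_{jj}w,w\rangle=-b_{j,2m_L}(-1)^{m_L}\|\partial_x^{m_L}w\|^2+\text{lower order}.
\]
Assumption~\ref{cond:D} together with Assumption~\ref{hopf_assumption}(ii) forces $b_{1,2m_L}$ and $b_{2,2m_L}$ to have a common sign. That sign must make the diagonal part dissipative, since otherwise $\tr M_m\to+\infty$, contradicting (iv). Hence the diagonal parts contribute a coercive term $\kappa\|\partial_x^{m_L}\psi\|^2$, up to lower-order terms. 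Those lower-order terms I would absorb by interpolation into $\varepsilon\|\partial_x^{m_L}\psi\|^2+C_\varepsilon\|\psi\|^2$.

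\textbf{Step 2: the off-diagonal terms.} The off-diagonal terms give cross terms $\langle L_{12}v,u\rangle+\langle L_{21}u,v\rangle$. Under Assumption~\ref{ass:R} these involve at most $m_L$ derivatives in total. Integrating by parts to split the derivatives, each cross term is bounded by $C\|\psi\|_{H^{\lceil m_L/2\rceil}}^2$. Interpolation then bounds this by $\varepsilon\|\partial_x^{m_L}\psi\|^2+C\|\psi\|^2$. Combining with Step~1,
\[
\kappa\|\partial_x^{m_L}\psi\|^2\le\|f\|\|\psi\|+C\|\psi\|^2.
\]

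\textbf{Step 3: imaginary part and large frequencies.} From the imaginary part of the energy identity,
\[
|\tilde\omega|\|\psi\|^2\le\|f\|\|\psi\|+|\Im\langle L\psi,\psi\rangle|.
\]
Only the off-diagonal odd-order terms contribute to $\Im\langle L\psi,\psi\rangle$, and by Step~2 they are bounded by $\varepsilon\|\partial_x^{m_L}\psi\|^2+C\|\psi\|^2$. Substituting the bound from Step~2 gives
\[
(|\tilde\omega|-C)\|\psi\|^2\le C\|f\|\|\psi\|,
\]
so $\|\psi\|\le C\|f\|/|\tilde\omega|$ once $|\tilde\omega|\ge\omega_0:=2C$. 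Injectivity follows from the same a priori bound. Surjectivity follows because each block $i\tilde\omega I_2-M_m$ is then injective, hence invertible, and the blockwise bound is uniform in $m$.

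\textbf{Main obstacle.} I expect the hard step to be the absorption of the cross terms in Steps~2--3. The constant $C$ there must not depend on $\tilde\omega$, and it is exactly the restriction to off-diagonal order at most $m_L$ that prevents a $\|\partial_x^{m_L}\psi\|^2$ term with an $O(1)$ coefficient from appearing. The key point to verify carefully is the interpolation inequality
\[
\|\partial_x^{k}w\|^2\le\varepsilon\|\partial_x^{m_L}w\|^2+C_\varepsilon\|w\|^2,\qquad k<m_L,
\]
on the mean-zero space. For the borderline case of cross-term order exactly $m_L$, I would first try the symmetric split of the derivatives, $\lceil m_L/2\rceil+\lfloor m_L/2\rfloor$. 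This puts both factors strictly below order $m_L$ whenever $m_L\ge2$. When $m_L=1$, the cross term $\langle u_x,v\rangle$ is bounded directly by $\varepsilon\|u_x\|^2+C\|v\|^2$.
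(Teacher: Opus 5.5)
Your proposal is correct and follows essentially the same energy-method route as the paper. The real part gives coercivity at order $m_L$, with the common sign of $b_{1,2m_L},b_{2,2m_L}$ fixed by Assumption~\ref{hopf_assumption}(ii),(iv); the imaginary part controls $|\tilde\omega|\,\|\psi\|^2$; and Assumption~\ref{ass:R} together with Poincar\'e/interpolation absorbs the off-diagonal terms. The differences are cosmetic: you test with the full vector $\psi$ and use a fixed-$\varepsilon$ Young inequality, which avoids the paper's componentwise identities and its $\delta_1$ cross-substitution, and you add an explicit blockwise surjectivity argument.
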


\begin{proof}
Throughout, $\lesssim$ denotes inequality up to a positive constant depending only on the coefficients of $L$.
On the mean-zero subspace, Poincaré inequality reads $\|\partial_x^k w\|_{L^2}\le\|\partial_x^{m_L}w\|_{L^2}$ for $0\le k\le m_L$.
By Lemma~\ref{lem:PES}, the spectrum of $L$ intersects the imaginary axis only at $\pm i\omega_c$; the threshold $\omega_0$ of Theorem~\ref{thm:resolvent} is made explicit in the proof.
It suffices to treat $\tilde\omega>0$: because $L$ has real coefficients, complex conjugation identifies the resolvents at $i\tilde\omega$ and $-i\tilde\omega$ and preserves their norms.

Under~\eqref{eq:offdiag-hyp}, the leading term of $\det M_m(\lambda_c)$ as a polynomial in $m^2$ is $b_{1,2m_L}\,b_{2,2m_L}\,m^{4m_L}$.
Condition~(ii) of Assumption~\ref{hopf_assumption} forces $b_{1,2m_L}\,b_{2,2m_L}>0$, while condition~(iv) forces $(-1)^{m_L+1}(b_{1,2m_L}+b_{2,2m_L})>0$.
Hence
\[
\sigma_i := (-1)^{m_L+1}\,b_{i,2m_L} > 0, \qquad i=1,2.
\]
Set $U=(u,v)^T$, $F=(\tilde u,\tilde v)^T$, $\widetilde B_i:=\sum_{2k+1\le m_L}|b_{i,2k+1}|$, and $D_i:=\sum_{1\le k\le m_L-1}|b_{i,2k}|$.

Multiplying the resolvent equation $(i\tilde\omega\,\mathrm{Id}-L)U=F$ component-wise by the conjugates $\bar u$ and $\bar v$, integrating over $\mathbb{T}$, and applying integration by parts via $\int(\partial_x^{2k}w)\bar w=(-1)^k\|\partial_x^k w\|^2_{L^2}$, the imaginary parts give
\begin{equation}\label{eq:im-id}
\tilde\omega\|u\|^2_{L^2}=\operatorname{Im}\!\int(L_{12}v)\bar u+\operatorname{Im}\!\int\tilde u\,\bar u, \qquad
\tilde\omega\|v\|^2_{L^2}=\operatorname{Im}\!\int(L_{21}u)\bar v+\operatorname{Im}\!\int\tilde v\,\bar v.
\end{equation}
Separating the leading order terms and taking real parts gives
\begin{equation}\label{eq:re-id}
\sigma_1\|\partial_x^{m_L}u\|^2_{L^2}+\sum_{k=0}^{m_L-1}(-1)^{k+1}b_{1,2k}\|\partial_x^k u\|^2_{L^2}=\operatorname{Re}\!\int(L_{12}v)\bar u+\operatorname{Re}\!\int\tilde u\,\bar u,
\end{equation}
and another analogous identity for $v$.

We first estimate the imaginary part.
Under~\eqref{eq:offdiag-hyp}, $L_{12}=\sum_{2k+1\le m_L}b_{1,2k+1}\partial_x^{2k+1}$, so application of Cauchy--Schwarz, followed by Poincaré $\|\partial_x^{2k+1}v\|_{L^2}\le\|\partial_x^{m_L}v\|_{L^2}$, bounds the right side of the first equation of~\eqref{eq:im-id} by
\[
\bigl(\widetilde B_1\|\partial_x^{m_L}v\|_{L^2}+\|\tilde u\|_{L^2}\bigr)\|u\|_{L^2}.
\]
Young's inequality $ab\le\tfrac{\tilde\omega}{4}a^2+\tfrac{1}{\tilde\omega}b^2$ applied to each product allows $\tfrac{\tilde\omega}{2}\|u\|^2_{L^2}$ to be absorbed into the left side of~\eqref{eq:im-id}, leaving
\begin{equation}\label{eq:im-bound}
\tilde\omega\,\|u\|^2_{L^2}\lesssim\tfrac{1}{\tilde\omega}\bigl(\|\partial_x^{m_L}v\|^2_{L^2}+\|\tilde u\|^2_{L^2}\bigr),
\end{equation}
and analogously
\begin{equation}\label{eq:im-bound-v}
	\tilde\omega\|v\|^2_{L^2}\lesssim\tfrac{1}{\tilde\omega}\bigl(\|\partial_x^{m_L}u\|^2_{L^2}+\|\tilde v\|^2_{L^2}\bigr).
\end{equation}

Now we estimate the real part.
Cauchy--Schwarz, Young's inequality with parameter $\delta_1>0$, and Poincaré inequality bound the off-diagonal contribution to~\eqref{eq:re-id} by
\[
\delta_1\,\widetilde B_1\,\|\partial_x^{m_L}v\|^2_{L^2}+\tfrac{\widetilde B_1}{4\delta_1}\|u\|^2_{L^2}.
\]
The lower-order LHS terms in~\eqref{eq:re-id} are absorbed via Gagliardo--Nirenberg on mean-zero,
\[
\|\partial_x^k u\|^2_{L^2}\le\delta_2\,\|\partial_x^{m_L}u\|^2_{L^2}+C(\delta_2)\|u\|^2_{L^2},\qquad 1\le k\le m_L-1,
\]
with $\delta_2=\sigma_1/(2D_1)$ (any $\delta_2>0$ if $D_1=0$), absorbing $\sigma_1/2\cdot\|\partial_x^{m_L}u\|^2_{L^2}$ into the LHS.
The source contribution is bounded similarly.
The result is
\begin{equation}\label{eq:re-bound}
\tfrac{\sigma_1}{2}\|\partial_x^{m_L}u\|^2_{L^2}\le\delta_1\,\widetilde B_1\,\|\partial_x^{m_L}v\|^2_{L^2}+C_1\|u\|^2_{L^2}+\delta_1\|\tilde u\|^2_{L^2},
\end{equation}
with $C_1$ depending on $\delta_1$ and the coefficients of $L$, and an analogous bound for $v$ with constants $\sigma_2,\widetilde B_2,C_2$.

Choose $\delta_1>0$ sufficiently small that
\[
\delta_1^2\le\frac{\sigma_1\sigma_2}{8\widetilde B_1\widetilde B_2}
\]
when $\widetilde B_1\widetilde B_2>0$; if $\widetilde B_1\widetilde B_2=0$, any $\delta_1>0$ suffices.
Substituting the $v$-version of~\eqref{eq:re-bound} into the $u$-version, the cross-coupling then absorbs into the left-hand side, yielding
\begin{equation}\label{eq:high-deriv}
\|\partial_x^{m_L}u\|^2_{L^2}+\|\partial_x^{m_L}v\|^2_{L^2}\lesssim\|U\|^2_{L^2}+\|F\|^2_{L^2}.
\end{equation}

Adding \eqref{eq:im-bound}, \eqref{eq:im-bound-v}, and substituting~\eqref{eq:high-deriv},
\[
\tilde\omega\|U\|^2_{L^2}\lesssim\tfrac{1}{\tilde\omega}\bigl(\|U\|^2_{L^2}+\|F\|^2_{L^2}\bigr).
\]
Multiplying by $\tilde\omega$ gives $\tilde\omega^2\|U\|^2_{L^2}\le K\bigl(\|U\|^2_{L^2}+\|F\|^2_{L^2}\bigr)$ with $K>0$ depending only on the coefficients of $L$.
For $\tilde\omega^2\ge2K$ the term $K\|U\|^2_{L^2}$ absorbs into the left side, yielding $\|U\|^2_{L^2}\le2K\,\tilde\omega^{-2}\|F\|^2_{L^2}$, which is~\eqref{eq:resolvent-bound} with $\omega_0=\sqrt{2K}$.
\end{proof}

\begin{remark}\label{rem:energy-limit}
The above energy method does not work under the more general Assumption~\ref{cond:D}, the hypothesis of Theorem~\ref{thm:resolvent}, which allows non-zero off-diagonal linear terms order up to $2m_L-1$.
Integration by parts splits the $2m_L$ diagonal derivatives evenly between the solution and the test function, so the energy identities are coercive only at order $m_L$.
The off-diagonal terms must be absorbed by this control, capping their order at $m_L$ as in~\eqref{eq:offdiag-hyp}.
\end{remark}

\section*{Statements and Declarations}

\subsection*{Competing interests}
The authors declare that they have no competing interests.

\subsection*{Funding}
The authors did not receive support from any organization for the submitted work.

\subsection*{Author contributions}
All authors contributed to the study conception, the analysis, and the writing of the manuscript, and all authors read and approved the final manuscript.

\subsection*{Use of generative AI and AI-assisted technologies}
During the preparation of this work the authors used Claude (Anthropic) and ChatGPT (OpenAI) to edit and improve the wording of the manuscript, and to assist with the implementation and testing of the companion \texttt{o2sym} package.
After using these tools, the authors reviewed and edited the content as needed and take full responsibility for the content of the published article.
All mathematical results, proofs and formulas are the authors' own and were verified independently of them.

\bibliographystyle{plain}
\bibliography{o2sym}

\end{document}